\numberwithin{equation}{section}
\begin{document}

\title
{Thickening of the diagonal and   interleaving distance}
\author{Fran{\c c}ois Petit and Pierre Schapira}
\maketitle
\begin{abstract}
Given a topological space $X$, a thickening kernel is a monoidal presheaf on $(\R_{\geq0},+)$ with values in the monoidal category of derived kernels on $X$. A bi-thickening kernel is defined on $(\R,+)$. To such a thickening kernel, one naturally associates an  interleaving distance  on  the derived category  of sheaves on $X$. 

We prove that a thickening kernel exists and is unique as soon as it is defined on an  interval containing $0$,  allowing  us to construct (bi-)thickenings  in two different situations.  

First,  when $X$ is a ``good'' metric space, starting  with small usual thickenings of the diagonal. 
The associated  interleaving  distance satisfies the stability property and Lipschitz kernels give rise to Lipschitz maps.  

Second, by using~\cite{GKS12}, when $X$ is a manifold and one is given a non-positive Hamiltonian isotopy on the cotangent bundle. In case $X$ is a  complete Riemannian manifold  
having a strictly positive convexity radius, we prove that it is a good metric space and  
that the two bi-thickening kernels of the diagonal, one associated with the distance, the other with the geodesic flow, coincide. 
 \end{abstract}

{\renewcommand{\thefootnote}{\mbox{}}
\footnote{Key words: sheaves, interleaving distance, persistent homology, Riemannian manifolds, Hamiltonian isotopies.}
\footnote{MSC: 55N99, 18A99, 35A27}
\footnote{F.P  was supported by the IdEx Université de Paris, ANR-18-IDEX-0001}
\addtocounter{footnote}{-4}
}
  
\tableofcontents

\section*{Introduction}
The aim of this paper is to construct (and then to study) a kernel associated with  a small thickening of the diagonal of a space $X$ and, as a byproduct, 
an interleaving distance  on the derived category of sheaves on  $X$. Such a kernel is constructed  in essentially two rather different situations:  first  when $X$ is a metric space by using the distance,
second when $X$ is a manifold and one is given a non positive Hamiltonian isotopy of the cotangent bundle. When $X$ is a Riemannian manifold and the isotopy is associated with the geodesic flow, we prove that the two kernels coincide.

The interleaving distance introduced by F. Chazal et al. in \cites{FCMGO09} has become a central element of TDA and has been actively studied since then \cites{BBK18,BP18,BL17,BG18}. It was generalised to multi-persistence modules by M. Lesnick in \cites{Les12, Les15}. Categorical frameworks for the interleaving distance have then been proposed  in~\cites{Bub14, SMS18}.
In his thesis \cite{Cur14}, J. Curry proposed an approach of persistence homology via sheaf theory. In \cite{KS18}, the author developed derived sheaf-technics for persistent homology and defined a new interleaving distance
for the category of {\em derived} sheaves on a real normed vector space by considering thickenings associated with the convolution by closed balls of radius $a\geq0$. This distance is sometimes called the convolution distance for sheaves and has recently been applied to question of symplectic topology (see for instance \cite{Ike17}). For a survey of the links between the ($1$-dimensional) interleaving distance, sheaf theory and symplectic topology, see the book by J. Zhang~\cite{JZ18}.

\spbb
Let $X$ be a ``good''  topological space  and denote as usual by $\Derb(\cor_X)$ the bounded derived  category  of sheaves of $\cor$-modules on $X$, for a 
commutative unital ring of finite global dimension $\cor$. We define a  thickening kernel on $X$ as
a monoidal presheaf $\stK$ defined on the monoidal category $(\R_{\geq0},+)$ with values in the monoidal category  
$(\Derb(\cor_{X\times X}),\conv)$ of kernels on $X$ (see Definition~\ref{def:pshK}). When this presheaf extends as a monoidal presheaf on $(\R,+)$, we call it a bi-thickening kernel of the diagonal. 

To a thickening kernel,  one naturally associates an interleaving distance $\dist_X$ on $\Derb(\cor_X)$. 

Our first result (Theorem~\ref{th:pshK}) asserts that  a thickening kernel exists  and is unique (up to isomorphism) as soon as it is constructed on some interval $[0,\alpha_X]$ (with $\alpha_X>0$).

\spbb
This theorem allows us to construct a (bi-)thickening kernel in two different situations. First in~\S~\ref{sect:metric}, 
when $X$ is what we call here a {\em good metric space} (see Definition~\ref{def:goodmetricsp}).
Second  in~\S~\ref{sect:symp}, when $X$ is a real manifold and one is given a non-positive $C^\infty$-function $h\cl\dT^*X\to\R$, where $\dT^*X$ is  the cotangent bundle with the zero-section removed.

\spbb
(1) Assume that  $(X,d_X)$ is a good metric space and denote by $\Delta_a$ the closed thickening of radius $a\geq0$ of the diagonal.
The hypothesis that $(X,d_X)$ is good  implies in particular that $\cor_{\Delta_a}\conv \cor_{\Delta_b}\simeq \cor_{\Delta_{a+b}}$  for $a,b$ sufficiently small. Applying our first theorem, we get a thickening kernel $\stK$ on $(\R_{\geq0},+)$ or, under mild extra-hypotheses, a bi-thickening.   In this case,  for $a<0$ small,
$\stK_a$   is, up to a shift and an orientation, the kernel associated with an open thickening of the diagonal.

We obtain several results on the associated  interleaving distance, some of them  generalizing  those of~\cite{KS18}. 
We prove in particular a stability theorem (Theorem~\ref{th:stabmetric}) which asserts that given two kernels $K_1$ and $K_2$ on $Y\times X$ and a sheaf $F$ on $X$, then 
$\dist_Y(K_1\conv F,K_2\conv F)\leq \dist_{Y\times X/X}(K_1,K_2)$ where $ \dist_{Y\times X/X}$ is a relative distance. 
We also introduce the notion of  a {\em $\delta$-Lipschitz kernel} on $Y\times X$ and show that such a kernel induces a Lipschitz map for the interleaving distances (Theorem~\ref{th:FctLip}). In both cases (stability and Lipschitz) we also obtain similar results for non proper composition, but then we need to assume that our spaces are  manifolds and the differential of the distance does not vanish. Indeed, in this situation, our proofs are based on Theorem~\ref{th:assocnp} which asserts that under some microlocal hypotheses,  non proper composition becomes associative.

\spbb
(2) Assume now that  $X$ is a real manifold and one is given a  $C^\infty$-function $h\cl\dT^*X\to\R$, homogeneous of degree $1$ in the fiber such that the flow $\Phi$ of the Hamiltonian vector field of $h$ is an Hamiltonian isotopy 
defined on  $\dT^*X\times I$ for some open interval $I$ containing $0$. This flow  gives rise to a Lagrangian manifold 
$\Lambda\subset \dT^*X\times \dT^*X\times T^*I$. Thanks to the main  theorem of~\cite{GKS12},
there exists a unique kernel $K^h\in\Derlb(\cor_{X\times X\times I})$ micro-supported by $\Lambda$ and whose restriction to $t=0$ is $\cor_\Delta$. Moreover, since $h$  is not time depending, this kernel satisfies $K^h_a\conv K^h_b\simeq K^h_{a+b}$ for $a,b$ small. Assuming $h$ is non-positive, there are natural morphisms $K^h_b\to K^h_a$ for 
$a\leq b$ and using our first theorem we get a bi-thickening kernel $\stK^h$.

When $X$ is a complete  Riemannian manifold having a strictly positive convexity radius, we prove 
(Theorem~\ref{th:riemann1}) that  it is  a good metric space and the associated thickening kernel is a bi-thickening, denoted here $\stKd$. We have thus two bi-thickening kernels in this case, $\stKd$ and $\stK^h$, the last one being associated with 
the geodesic flow (corresponding to $h(x,\xi)=-\vvert\xi\vvert_x$). We prove in Theorem~\ref{th:FS2} that these two kernels coincide. 

\spbb
In the course of the paper, we treat some easy examples and in particular we prove that the Fourier-Sato transform, an equivalence of categories  for sheaves on a  sphere and the dual sphere,  is an isometry when endowing these spheres with their natural Riemannian metric. Indeed,  the Fourier-Sato transform is nothing but  the value at $\pi/2$ of the thickening kernel of the  Riemannian sphere.  

\medskip\noindent{\bf Acknowledgments}

The author  F.P. warmly thanks Vincent Pecastaing and Yannick Voglaire for fruitful comments. The author P.S  warmly thanks  Beno{\^i}t Jubin for the same reason.
Both authors warmly thank St{\'e}phane Guillermou  for extremely valuable remarks and also for 
his  proof of Lemma~\ref{le:gu} which considerably simplifies an earlier proof. 

\section{Sheaves and the interleaving distance}

\subsection{Sheaves}
In the sequel, we denote by $\rmpt$ the topological space with a single element. For a topological space $X$, we denote by $a_X\cl X\to \rmpt$ the unique map fromt $X$ to $\rmpt$. We denote by $\Delta_X$, or simply $\Delta$, the diagonal of $X\times X$ and by $\delta_X$ or simply $\delta$ the diagonal embedding. If $X$ is a $C^\infty$-manifold, we denote by $\pi_X\cl T^*X\to X$ its cotangent bundle and by $\dT^*X$ the cotangent bundle with the zero-section removed. 
Recall that a topological  space $X$ is good if it is Hausdorff, locally compact, countable at infinity and of finite flabby dimension.

We consider  a commutative unital ring of finite global dimension $\cor$
and a good topological space $X$. We denote by $\RD(\cor_X)$ the derived category of sheaves of $\cor$-modules on $X$ and simply call an object of this category ``a sheaf''. We shall almost always work in the bounded derived category $\Derb(\cor_X)$ but 
we shall also need to consider  the full  subcategory $\Derlb(\cor_X)$ of $\RD(\cor_X)$
consisting of locally bounded objects, that is, objects  whose restriction to any relatively compact open subset $U$ of $X$ belong to $\Derb(\cor_U)$ (see~\cite{GKS12}*{Def.~1.12}).

We shall freely make use of the six Grothendieck operations on sheaves and refer to~\cite{KS90}. 
In particular, we denote by
$\omega_X$ the dualizing complex and we use the duality functors
\eqn
&&\RD'_X=\rhom(\scbul,\cor_X),\quad \RD_X=\rhom(\scbul,\omega_X).
\eneqn
For a locally closed subset $A\subset X$, we denote by $\cor_{XA}$ the sheaf on $X$ which is the constant sheaf with stalk $\cor$ on $A$ and $0$ elsewhere. If there is no risk of confusion, we simply denote it by $\cor_A$. If $F$ is a sheaf on $X$, one sets $F_A\eqdot F\tens\cor_A$.   We also often simply denote by $F\tens L$ the derived tensor product when $L$ is of the type $\cor_A$ up to a shift or an orientation. As usual, we denote by $\rsect(X;\scbul)$ and $\rsect_c(X;\scbul)$ the derived functors of global sections and global sections with compact supports.

When $X$ is a $C^\infty$-manifold, we shall  make use of the microlocal  theory of sheaves, following~\cite{KS90}*{Ch. V-VI}. 
Recall that the {\em micro-support} $\SSi(F)$ of a sheaf $F$ is a closed $\R^+$-conic subset of $T^*X$,  co-isotropic for the homogeneous symplectic structure of $T^*X$ (we shall not use here this property).  We shall also use the notation 
$\dot\SSi(F)\eqdot\SSi(F)\cap\dT^*X$.
We shall also encounter {\em cohomologically constructible} sheaves for which we refer to loc.\ cit.\  \S~3.4.
Recall that, on a  real analytic manifold,  $\R$-constructible sheaves   (see~loc.\ cit.\ Ch.~VIII) are  cohomologically constructible.

\subsubsection*{Kernels}
Given topological spaces $X_i$ ($i=1,2,3$) we set
$X_{ij}=X_i\times X_j$, $X_{123}=X_1\times X_2\times X_3$.    We denote by $q_i\cl X_{ij}\to X_i$
and $q_{ij}\cl X_{123}\to X_{ij}$ the projections.

We shall often write for short $\RD_i$ instead of $\RD_{X_i}$, as well as for similar notations such as for example  $\RD'_{i}$ or
$\RD_{ij}$.

For $A\subset X_{12}$ and $B\subset X_{23}$ one sets $A\conv B=q_{13}(\opb{q_{12}}A\cap\opb{q_{23}}B)$:
\eq\label{diag:123}
&&\ba{l}\xymatrix{
	&X_{123}\ar[ld]_{q_{12}}\ar[rd]^-{q_{23}}\ar[d]^-{q_{13}}&\\
	X_{12}&X_{13}&X_{23} . 
}\ea\eneq
When the spaces $X_i$'s are real manifolds, 
one denotes by $p_{ij}\cl T^*X_{123}\to T^*X_{ij}$ the projection and we also define
\eqn
&&p_{i^aj}\cl T^*X_{123}\to T^*X_{ij},\quad (x_1,x_2,x_3;\xi_1,\xi_2,\xi_3)\mapsto (x_i,x_j;-\xi_i,\xi_j)
\eneqn
the composition of $p_{ij}$ with the antipodal map of $T^*X_i$. 

For $A\subset T^*X_{12}$ and $B\subset T^*X_{23}$ one sets 
\eqn
&&A\aconv B=p_{13}(\opb{p_{12}}A\cap\opb{p_{2^a3}}B)
\eneqn

For good topological spaces $X_i$'s as above, one often calls an object $K_{ij}\in\Derb(\cor_{X_{ij}})$ {\em a kernel}.
One defines  as usual the  composition of kernels 
\eq\label{eq:convker}
&&K_{12}\cconv[2] K_{23}\eqdot \reim{q_{13}}(\opb{q_{12}}K_{12}\ltens\opb{q_{23}}K_{23}).
\eneq
If there is no risk of confusion, we write $\conv$ instead of $\cconv[2]$. 

It is sometimes natural to permute the roles of $X_i$ and $X_j$.
We introduce the notation
\eq\label{eq:volte}  
&&\ba{l}
v\cl X_{12}\to X_{21},\quad (x_1,x_2)\mapsto (x_2,x_1),\\
\nu \cl X_{123}\to  X_{321}, (x_1,x_2,x_3) \mapsto (x_3,x_2,x_1).
\ea \eneq
Since $v$ and $\nu$ are involutions, one has 
\eq\label{eq:oimv}
&&\oim{v}\simeq\eim{v},\, \opb{v}\simeq\epb{v},\quad \oim{\nu}\simeq\eim{\nu},\,\opb{\nu}\simeq\epb{\nu}. 
\eneq
Using~\eqref{eq:oimv}, one immediately obtains:
\begin{proposition}
	Let $K_{ij} \in\Derb(\cor_{X_{ij}})$, $i=1,2$, $j=i+1$ and set $K_{ji}\eqdot\oim{v}K_{ij}$. 
	Then
	\eqn
	&&\oim{v}(K_{12}\cconv[2] K_{23})\simeq K_{32}\cconv[2] K_{21}.
	\eneqn
\end{proposition}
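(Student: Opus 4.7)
The plan is to reduce everything to the obvious intertwining by the involution $\nu\cl X_{123}\to X_{321}$ of the two systems of projections, and then apply the identifications~\eqref{eq:oimv} which allow $\nu$ and $v$ to be moved freely past $\opb{(\cdot)}$ and $\reim{(\cdot)}$.

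First, I would record the three compatibility relations between the projections on $X_{123}$ and those on $X_{321}$: denoting by $q'_{ij}$ the analogues of the $q_{ij}$ for $X_{321}$, we have $q'_{32}\circ\nu=v\circ q_{23}$, $q'_{21}\circ\nu=v\circ q_{12}$, and $q'_{31}\circ\nu=v\circ q_{13}$ (each is a straightforward check on the definition of $v$ and $\nu$). These identities, combined with the fact that $v$ and $\nu$ are involutions, give $\opb{\nu}\opb{q'_{32}}\oim{v}\simeq\opb{q_{23}}$ and similarly for the pair $(q'_{21},q_{12})$; so, using that $K_{ji}=\oim{v}K_{ij}$, one obtains
\eq\label{eq:nupb}
\opb{\nu}\bigl(\opb{q'_{32}}K_{32}\ltens\opb{q'_{21}}K_{21}\bigr)\simeq\opb{q_{23}}K_{23}\ltens\opb{q_{12}}K_{12}.
\eneq

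Second, I would apply $\oim{v}=\reim{v}$ to the composition $K_{12}\cconv[2] K_{23}$ and push it through $\reim{q_{13}}$ using functoriality of $\reim{(\cdot)}$ together with $v\circ q_{13}=q'_{31}\circ\nu$:
\eqn
\oim{v}(K_{12}\cconv[2] K_{23})\simeq \reim{q'_{31}}\reim{\nu}\bigl(\opb{q_{12}}K_{12}\ltens\opb{q_{23}}K_{23}\bigr).
\eneqn
Substituting~\eqref{eq:nupb} inside the $\reim{\nu}$ and using $\reim{\nu}\circ\opb{\nu}\simeq\oim{\nu}\circ\opb{\nu}\simeq\id$ (since $\nu$ is an involutive isomorphism), the right-hand side collapses to $\reim{q'_{31}}(\opb{q'_{32}}K_{32}\ltens\opb{q'_{21}}K_{21})$, which by definition is $K_{32}\cconv[2] K_{21}$.

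There is not really a substantive obstacle here: the whole content lies in keeping the bookkeeping of the projections straight and in correctly invoking~\eqref{eq:oimv} so that $\opb{v}\oim{v}\simeq\id$ and the analogous identities for $\nu$ can be applied without worrying about the distinction between $\oim{}/\reim{}$ and $\opb{}/\epb{}$. The only place requiring a little care is checking that $\reim{(\cdot)}$ commutes with the tensor–pullback through $\nu$ in the form used above, but this is immediate from $\nu$ being an isomorphism (so $\reim{\nu}$ is a quasi-inverse of $\opb{\nu}$).
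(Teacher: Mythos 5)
Your proof is correct and is exactly the detailed version of what the paper intends: the paper offers no written proof, simply declaring the proposition immediate from~\eqref{eq:oimv}, and your bookkeeping of the projections via $\nu$ together with $\opb{v}\oim{v}\simeq\id$ and $\reim{\nu}\opb{\nu}\simeq\id$ is the natural way to make that precise. No gaps.
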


In the sequel, we shall need to control the micro-support of the composition. Let $X_i$ and $K_{ij}$ be as above $i=1,2$, $j=i+1$.
Let  $A_{ij}=\SSi(K_{ij})\subset T^*X_{ij}$  and assume that  
\eq\label{eq:noncharker}
&&\left\{
\parbox{75ex}{
	(i) $q_{13}$ is proper on $\opb{q_{12}}\supp(K_{12})\cap\opb{q_{23}}\supp(K_{23})$,
	\\[1ex]
	(ii) 
	$\ba[t]{l}
	\opb{p_{12}}A_{12}\cap\opb{p_{2^a3}}A_{23}
	\cap (T^*_{X_1}X_1\times T^*X_2\times T^*_{X_3}X_3) \subset T^*_{X_{123}}(X_{123}).
	\ea$
}\right.
\eneq
\begin{proposition}\label{pro:GuS14}
	Assume~\eqref{eq:noncharker}. Then  
	\eq\label{eq:convolution_of_kern}
	&&\SSi(K_{12}\cconv[2] K_{23})\subset A_{12}\aconv A_{23}.
	\eneq
\end{proposition}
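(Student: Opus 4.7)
My plan is to chain the three standard micro-support estimates of~\cite{KS90}, applied in turn to the three operations in the definition~\eqref{eq:convker} of $K_{12}\cconv[2] K_{23}$: inverse image under $q_{12},q_{23}$, tensor product, and proper direct image under $q_{13}$. Throughout, I set $F=\opb{q_{12}}K_{12}$, $G=\opb{q_{23}}K_{23}$ and $H=F\ltens G$.

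First, since $q_{12}$ and $q_{23}$ are submersions, the inverse image estimate yields
\eqn
&&\SSi(F)\subset \opb{p_{12}}A_{12}\cap(T^*X_{12}\times T^*_{X_3}X_3),\\
&&\SSi(G)\subset \opb{p_{23}}A_{23}\cap(T^*_{X_1}X_1\times T^*X_{23}),
\eneqn
so the $\xi_3$-component is forced to vanish on $\SSi(F)$ and $\xi_1$ on $\SSi(G)$.

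Next, to bound $\SSi(H)$ by the fibrewise Minkowski sum $\SSi(F)+\SSi(G)$, the tensor product estimate requires the non-characteristic condition $\SSi(F)\cap\SSi(G)^a\subset T^*_{X_{123}}X_{123}$. Unwinding using the previous step, a point $(x_1,x_2,x_3;\xi_1,\xi_2,\xi_3)$ lies in $\SSi(F)\cap\SSi(G)^a$ iff $\xi_1=\xi_3=0$, $(x_1,x_2;0,\xi_2)\in A_{12}$ and $(x_2,x_3;-\xi_2,0)\in A_{23}$, the sign flip in the last entry coming from the antipodal. This is precisely the set appearing on the left of hypothesis~\eqref{eq:noncharker}(ii), so the hypothesis supplies the non-characteristic condition.

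Finally, hypothesis~\eqref{eq:noncharker}(i) ensures $q_{13}$ is proper on $\supp H$, and the proper direct image estimate gives
\eqn
&&\SSi(\reim{q_{13}}H)\subset\{(x_1,x_3;\xi_1,\xi_3):\exists x_2,\ (x_1,x_2,x_3;\xi_1,0,\xi_3)\in\SSi(H)\}.
\eneqn
A point in the bracketed set coming from $\SSi(F)+\SSi(G)$ decomposes as $(\xi_1,\xi_2,0)+(0,\eta_2,\xi_3)$ with $\xi_2+\eta_2=0$, so $\eta_2=-\xi_2$; the conditions $(x_1,x_2;\xi_1,\xi_2)\in A_{12}$ and $(x_2,x_3;-\xi_2,\xi_3)\in A_{23}$ are exactly those defining $A_{12}\aconv A_{23}=p_{13}(\opb{p_{12}}A_{12}\cap\opb{p_{2^a3}}A_{23})$. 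The only delicate point, and hardly an obstacle, is the sign bookkeeping in the second step: matching the antipodal in $\SSi(G)^a$ with the sign built into $p_{2^a3}$ in~\eqref{eq:noncharker}(ii). Once the conventions are aligned, the whole argument is a direct concatenation of standard estimates.
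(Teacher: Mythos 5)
Your proposal is correct and is exactly the argument the paper compresses into its one-line citation of the classical micro-support bounds of~\cite{KS90}*{\S~5.4}: the submersive inverse-image estimate, the tensor-product estimate whose non-characteristicity is precisely hypothesis~\eqref{eq:noncharker}(ii), and the proper direct-image estimate enabled by~\eqref{eq:noncharker}(i). The sign bookkeeping matching $\SSi(G)^a$ with $p_{2^a3}$ is carried out correctly.
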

\begin{proof}
	This follows from the classical bounds to the micro-supports of proper direct images and non-characteristic inverse images of~\cite{KS90}*{\S~5.4}.
\end{proof}
The next lemma will be useful.
\begin{lemma}\label{le:kAcB}
	Let $A\subset X_{12}$ and $B\subset X_{23}$ be two closed subsets. 
	\anum
	\item
	Assume that $q_{13}$ is proper on $A\times_{X_2}B\eqdot \opb{q_{12}}A\cap\opb{q_{23}}B$. Then there is a natural morphism $\cor_{A\conv B}\to\cor_A\conv\cor_B$.
	\item
	Assume moreover that the fibers of the map $q_{13}\cl A\times_{X_2} B\to A\conv B$ are contractible. Then 
	$\cor_{A\conv B}\isoto\cor_A\conv\cor_B$.
	\eanum
\end{lemma}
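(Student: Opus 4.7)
The plan is to make the composition $\cor_A\conv\cor_B$ explicit. Since $A\subset X_{12}$ is closed one has $\opb{q_{12}}\cor_A\simeq\cor_{\opb{q_{12}}A}$ and similarly $\opb{q_{23}}\cor_B\simeq\cor_{\opb{q_{23}}B}$; their derived tensor product is the constant sheaf on the closed intersection $A\times_{X_2}B$. Under hypothesis (a) of the lemma, $\reim{q_{13}}$ agrees with $\roim{q_{13}}$ on this sheaf, and $A\conv B$ is closed in $X_{13}$. Factoring the restriction of $q_{13}$ to $A\times_{X_2}B$ as $A\times_{X_2}B \stackrel{\bar q}{\to} A\conv B \stackrel{i}{\hookrightarrow} X_{13}$, with $\bar q$ proper and surjective and $i$ a closed embedding, I obtain
\eqn
&&\cor_A\conv\cor_B\;\simeq\;\roim{i}\roim{\bar q}\,\cor_{A\times_{X_2}B}\;\simeq\;\roim{i}\bigl(\roim{\bar q}\,\opb{\bar q}\cor_{A\conv B}\bigr).
\eneqn
The unit of the adjunction $(\opb{\bar q},\roim{\bar q})$ applied to $\cor_{A\conv B}$, followed by $\roim{i}$, then produces the natural morphism $\cor_{A\conv B}\to\cor_A\conv\cor_B$ required in (a).

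For (b) it suffices to show that the unit $\cor_{A\conv B}\to\roim{\bar q}\,\opb{\bar q}\cor_{A\conv B}$ is already an isomorphism of sheaves on $A\conv B$, which I check on stalks. By proper base change for the proper map $\bar q$, the stalk of $\roim{\bar q}\,\opb{\bar q}\cor_{A\conv B}$ at $y\in A\conv B$ computes $\rsect(\bar q^{-1}(y);\cor)$; by hypothesis the fiber $\bar q^{-1}(y)$ is contractible, hence its cohomology equals $\cor$ concentrated in degree $0$, and the unit induces the identity on $\cor$.

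The argument is essentially formal once the identifications above are in place. The only point deserving care is to describe the morphism of (a) as the unit of an adjunction, so that its stalk-wise behaviour in (b) follows immediately from proper base change rather than from a hands-on computation within the derived six-functor formalism.
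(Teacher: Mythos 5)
Your proof is correct and takes essentially the same route as the paper's: the morphism in (a) is the adjunction unit for the proper surjection onto the closed set $A\conv B$ (the paper phrases this as the arrow $\opb{q_{13}}\cor_{q_{13}(C)}\to\cor_C$ followed by adjunction, which is the same morphism), together with the identification $\reim{q_{13}}\simeq\roim{q_{13}}$ on the support. Part (b), which the paper dismisses as ``clear'', is exactly the proper-base-change plus contractible-fiber computation you spell out.
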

\begin{proof}
	(a) Set $C=\opb{q_{12}}A\cap\opb{q_{23}}B$. Then $q_{13}(C)=A\conv B$ and 
	$\cor_C\simeq\opb{q_{12}}\cor_A\tens\opb{q_{23}}\cor_B$. By the hypothesis, the set $\opb{q_{13}}q_{13}(C)$ is closed and contains $C$.  Therefore, the morphism 
	$\opb{q_{13}}\cor_{q_{13}(C)}\to \cor_C$ defines by adjunction the morphism 
	$\cor_{A\conv B}\to\roim{q_{13}}(\opb{q_{12}}\cor_A\tens\opb{q_{23}}\cor_B)\isofrom\cor_A\conv\cor_B$ (recall that $q_{13}$ is proper on $C$). 
	
	\spa
	(b) is clear. 
\end{proof}

It is easily checked, and well-known, that the composition  of kernels  is associative, namely given three kernels $K_{ij} \in\Derb(\cor_{X_{ij}})$, $i=1,2,3$, $j=i+1$ one has 
an isomorphism 
\eq\label{eq:assockernels}
&&(K_{12}\cconv[2] K_{23})\cconv[3] K_{34}\simeq K_{12}\cconv[2] (K_{23}\cconv[3] K_{34}),
\eneq
this isomorphism satisfying natural compatibility conditions that we shall not make here explicit.

Of course, this construction applies in the particular cases where $X_i=\rmpt$ for some $i$.
For example, if $K\in\Derb(\cor_{X_{12}})$ and $F\in\Derb(\cor_{X_2})$, one usually sets $\Phi_K(F)=K\conv F$. Hence
\eq\label{eq:PhiK}
&&\Phi_K(F)= K\conv F=\reim{q_1}(K\ltens\opb{q_2}F). 
\eneq
It is natural to consider the right adjoint functor $\Psi_K$ of the functor $\Phi_K$ (see~\cite{KS90}*{Prop.~3.6.2}) given by
\eq\label{eq:PsiK}
&&\Psi_K(G)= \roim{q_2}\rhom(K,\epb{q_1}G).
\eneq
Given three spaces $X_i$ ($i=1,2,3$) and kernels $K_1$ on $X_{12}$  and $K_{2}$ on $X_{23}$,  one has
(by~\eqref{eq:assockernels} or~\cite{KS90}*{Prop.~3.6.4})
\eq
&&\Phi_{K_2}\conv\Phi_{K_1}\simeq\Phi_{K_2\conv K_1},\quad \Psi_{K_1}\conv\Psi_{K_2}\simeq\Psi_{K_2\conv K_1}.
\eneq
\begin{proposition}\label{pro:dualker}
	Let $K\in\Derb(\cor_{X\times X})$ and $F\in\Derb(\cor_X)$. Then 
	$\RD_X(\Phi_K(F))\simeq\Psi_{\oim{v}K}(\RD_XF)$.
\end{proposition}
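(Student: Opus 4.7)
The plan is to unfold $\RD_X\Phi_K(F)$ via standard identities of the six operations, and then recognize the result as $\Psi_{\oim{v}K}(\RD_X F)$ after translating by the swap $v$.

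\emph{Step 1: adjunction for $\reim{q_1}$.}  Starting from $\Phi_K(F) = \reim{q_1}(K \ltens \opb{q_2}F)$, I would apply the $\rhom$-enhanced adjunction
$\rhom(\reim{q_1}(-),\omega_X) \simeq \roim{q_1}\rhom(-,\epb{q_1}\omega_X)$
from~\cite{KS90} to obtain
$$\RD_X\Phi_K(F) \simeq \roim{q_1}\rhom(K \ltens \opb{q_2}F,\, \epb{q_1}\omega_X).$$

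\emph{Step 2: recognize an $\epb{q_2}\RD_X F$ inside.}  Since $a_{X\times X} = a_X \circ q_i$ for $i=1,2$, one has $\epb{q_1}\omega_X \simeq \omega_{X\times X} \simeq \epb{q_2}\omega_X$. Combined with the tensor-Hom adjunction and the standard identity $\rhom(\opb{q_2}F,\epb{q_2}\omega_X) \simeq \epb{q_2}\RD_X F$, this rewrites the previous expression as
$$\RD_X\Phi_K(F) \simeq \roim{q_1}\rhom(K,\, \epb{q_2}\RD_X F).$$

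\emph{Step 3: transfer to $\Psi_{\oim{v}K}$ via $v$.}  Since $v$ is a homeomorphism with $v \circ v = \id$, one has $\opb{v}\simeq\epb{v}$, $\oim{v}\simeq\eim{v}$ (see~\eqref{eq:oimv}), and moreover $\oim{v}$ and $\opb{v}$ are mutually inverse exact equivalences that commute with $\rhom$. Combining $q_1 = q_2 \circ v$ (so $\roim{q_1}\simeq \roim{q_2}\circ\roim{v}$ and $\epb{q_2}\simeq \opb{v}\epb{q_1}$) with $\oim{v}\opb{v}\simeq\id$, one gets
$$\roim{q_1}\rhom(K,\epb{q_2}\RD_X F) \simeq \roim{q_2}\rhom(\oim{v}K,\epb{q_1}\RD_X F) = \Psi_{\oim{v}K}(\RD_X F).$$

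The only step that requires care is the final bookkeeping with the swap in Step 3; everything else is a direct application of standard six-operations identities from~\cite{KS90}, so no genuine obstacle is to be expected.
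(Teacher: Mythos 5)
Your proof is correct and follows essentially the same route as the paper's: adjunction for $\reim{q_1}$, the identification $\epb{q_1}\omega_X\simeq\omega_{X\times X}\simeq\epb{q_2}\omega_X$, tensor-Hom adjunction, and $\rhom(\opb{q_2}F,\epb{q_2}\omega_X)\simeq\epb{q_2}\RD_XF$, arriving at $\roim{q_1}\rhom(K,\epb{q_2}\RD_XF)$. Your Step 3 merely makes explicit the final rewriting via $v$ that the paper leaves implicit, and it is carried out correctly.
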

\begin{proof}
	One has the sequence of isomorphisms
	\eqn
	\RD_X(\Phi_K(F))&\simeq&\rhom(\reim{q_1}(K\ltens\opb{q_2}F),\omega_X)\\
	&\simeq&\roim{q_1}\rhom(K\ltens\opb{q_2}F,\omega_{X\times X})\\
	&\simeq&\roim{q_1}\rhom(K,\rhom(\opb{q_2}F,\epb{q_2}\omega_X))\\
	&\simeq&\roim{q_1}\rhom(K,\epb{q_2}\RD_XF).
	\eneqn
\end{proof}
Also note that when $X_2=\rmpt$, that is, $F,K\in \Derb(\cor_{X})$, then
\eq
&&F\conv K\simeq\rsect_c(X;F\ltens K).
\eneq

\subsubsection*{Non proper composition}

In many situations, the non proper composition is useful. For $K_1\in\Derb(\cor_{X_{12}})$ and $K_2\in\Derb(\cor_{X_{23}})$, one sets
\eq\label{eq:npconv}
&&K_1\npconv K_2=\roim{q_{13}}(\opb{q_{12}}K_1\ltens\opb{q_{23}}K_2).
\eneq
One shall be aware that in general, this composition is not associative.
However, under suitable hypotheses, it becomes associative.

Consider the diagram of good topological spaces
\eq\label{diag:1234}       
&&\ba{l}\xymatrix{
	&& X_{123}\ar[ld]|-{q_{12}}\ar[d]|-{q_{13}}\ar[rd]|-{q_{23}}&&\\
	&X_{12}\ar[dl]|-{q_1}\ar[rd]|-{q_2}&X_{13}\ar[rrd]|-{p_2}\ar[lld]|-{p_1}&X_{23}\ar[ld]|-{r_1}\ar[dr]|-{r_2}&&\\
	X_1&&X_2&&X_3
}\ea\eneq
Note that the squares $(X_{12},X_2,X_{23},X_{123})$, $(X_{12},X_1,X_{13},X_{123})$ and  
$(X_{13},X_3,X_{23},X_{123})$ are Cartesian. 

\begin{lemma}\label{le:assocnp}
	Let $X_i$ \lp$i=1,2,3$\rp\, be three $C^\infty$-manifolds. Let $K_1\in\Derb(\cor_{X_{12}})$ and $K_2\in\Derb(\cor_{X_{23}})$. Assume that
	$K_1$ is cohomologically constructible and 
	$\SSi(K_1)\cap (T^*_{X_1}X_1\times T^*X_2)\subset T^*_{X_{12}}X_{12}$. Then 
	\eqn
	&&\roim{q_{12}}(\opb{q_{12}}K_1\ltens\opb{q_{23}}K_2)\simeq K_1\ltens\roim{q_{12}}\opb{q_{23}}K_2.
	\eneqn
\end{lemma}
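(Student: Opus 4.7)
The plan is to build the canonical comparison morphism from the right-hand side to the left-hand side and then to show it is an isomorphism. In essence, the content of the lemma is the projection formula for $\roim{q_{12}}$ applied to $\opb{q_{12}}K_1$ and $\opb{q_{23}}K_2$: although $q_{12}$ is not proper, the combined hypotheses on $K_1$ upgrade the classical projection formula (which is automatic for $\reim{q_{12}}$) to the desired statement for $\roim{q_{12}}$.

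First I would construct the natural morphism
\[
\alpha\colon K_1\ltens\roim{q_{12}}\opb{q_{23}}K_2 \longrightarrow \roim{q_{12}}\bigl(\opb{q_{12}}K_1\ltens\opb{q_{23}}K_2\bigr)
\]
by composing the unit $K_1\to\roim{q_{12}}\opb{q_{12}}K_1$ of the adjunction $(\opb{q_{12}},\roim{q_{12}})$ with the canonical lax monoidal map $\roim{q_{12}}A\ltens\roim{q_{12}}B\to\roim{q_{12}}(A\ltens B)$. Checking that $\alpha$ is an isomorphism is a local question on $X_{12}$.

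For the local check, I would exploit the Cartesian square $(X_{12},X_2,X_{23},X_{123})$ appearing in diagram~\eqref{diag:1234}: since $q_2\colon X_{12}\to X_2$ is a smooth submersion, smooth base change yields $\roim{q_{12}}\opb{q_{23}}K_2\simeq\opb{q_2}\roim{r_1}K_2$, so the target of $\alpha$ can be rewritten as $K_1\ltens\opb{q_2}\roim{r_1}K_2$. Then, using cohomological constructibility of $K_1$, I would reduce locally on $X_{12}$ to the case where $K_1$ is of the elementary form $\cor_S$ for a locally closed subanalytic $S\subset X_{12}$ whose conormal geometry is compatible with the microlocal hypothesis, and verify the statement for these pieces by combining the classical projection formula for $\reim{q_{12}}$ with a microlocal vanishing argument along the fibers of $q_{12}$.

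The main obstacle is that $q_{12}$ is not proper --- its fibers are copies of $X_3$, typically non-compact --- so $\roim{q_{12}}$ and $\reim{q_{12}}$ disagree in general. The hypothesis $\SSi(K_1)\cap(T^*_{X_1}X_1\times T^*X_2)\subset T^*_{X_{12}}X_{12}$ is precisely the non-characteristic condition for the second projection $q_2\colon X_{12}\to X_2$, and its role is to force $\opb{q_{12}}K_1$ to have a micro-support transverse to the fibers of $q_{12}$ in a way that rules out spurious cohomology at infinity along these fibers, thereby bridging the gap between the $\reim{q_{12}}$ formula and the $\roim{q_{12}}$ formula.
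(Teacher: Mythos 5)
Your setup is fine: the comparison morphism $\alpha$ built from the unit of $(\opb{q_{12}},\roim{q_{12}})$ and lax monoidality is the right map, and the base-change identification $\roim{q_{12}}\opb{q_{23}}K_2\simeq\epb{q_2}\roim{r_1}K_2$ (hence a pullback from $X_2$, with $\SSi\subset T^*_{X_1}X_1\times T^*X_2$) is exactly how the paper begins. The problem is that the step where the isomorphism is actually proved is missing, and the route you sketch for it does not work. A cohomologically constructible sheaf (in the sense of \cite{KS90}*{\S 3.4}, which is all the lemma assumes) need not be $\R$-constructible, so there is no d\'evissage of $K_1$ into pieces $\cor_S$ with $S$ locally closed subanalytic. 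Worse, even when $K_1$ is constructible, the pieces $\cor_S$ of a stratification have much larger micro-supports than $K_1$ and in general violate the hypothesis $\SSi(K_1)\cap(T^*_{X_1}X_1\times T^*X_2)\subset T^*_{X_{12}}X_{12}$, so the ``microlocal vanishing argument'' you invoke for the elementary pieces --- which you in any case do not specify --- would have no hypothesis to run on. This is where all the content of the lemma lies, and it is not supplied.

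The paper's mechanism is different and global, with no d\'evissage: it uses \cite{KS90}*{Cor.~6.4.3}, which says that for $F$ cohomologically constructible with $\SSi(F)\cap\SSi(G)^a\subset T^*_{X_{12}}X_{12}$ one has $F\ltens G\simeq\rhom(\RD'F,G)$. Applying this once with $F=K_1$ and $G=\roim{q_{12}}\opb{q_{23}}K_2$ (legitimate by the micro-support estimate above), once more upstairs with $F=\opb{q_{12}}K_1$ and $G=\opb{q_{23}}K_2$, and then the adjunction $\rhom(\RD'_{12}K_1,\roim{q_{12}}(\cdot))\simeq\roim{q_{12}}\rhom(\opb{q_{12}}\RD'_{12}K_1,\cdot)$, yields the isomorphism directly. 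I would also caution against your closing heuristic: the lemma does not assert that $\roim{q_{12}}$ and $\reim{q_{12}}$ agree on $\opb{q_{12}}K_1\ltens\opb{q_{23}}K_2$ (they do not in general, since $K_2$ is arbitrary along the $X_3$-fibers); it is a projection formula for $\roim{q_{12}}$, and the obstruction it overcomes is the boundary behavior of $K_1$, encoded in $\RD'_{12}K_1$, not cohomology at infinity along the fibers.
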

\begin{proof}
	Applying~\cite{KS90}*{Prop.~5.4.1}, we have
	\eqn\label{eq:ssK22}
	&&\SSi(\opb{q_{23}}K_2)\subset T^*_{X_1}X_1\times T^*X_{23},\\
	&&\SSi(\epb{q_2}\roim{r_1}K_2)\subset T^*_{X_1}X_1\times T^*X_2.
	\eneqn
	Since $\roim{q_{12}}\epb{q_{23}}K_2\simeq  \epb{q_2}\roim{r_1}K_2$ and 
	$\SSi(\roim{q_{12}}\opb{q_{23}}K_2)=\SSi(\roim{q_{12}}\epb{q_{23}}K_2)$, we get:
	\eq\label{eq:ssK2}
	&&\SSi(\roim{q_{12}}\opb{q_{23}}K_2)\subset T^*_{X_1}X_1\times T^*X_2.
	\eneq
	Applying~\cite{KS90}*{Cor.~6.4.3} we get by the hypothesis and~\eqref{eq:ssK2}
	\eq\label{eq:ssK23}
	K_1\ltens\roim{q_{12}}\opb{q_{23}}K_2&\simeq &\rhom(\RD'_{12}K_1, \roim{q_{12}}\opb{q_{23}}K_2).
	\eneq
	Moreover, the hypothesis implies $\SSi(\RD'_{12}K_1)\cap (T^*_{X_1}X_1\times T^*X_2)\subset T^*_{X_{12}}X_{12}$, hence
	\eqn
	\SSi(\opb{q_{12}}\RD'_{12}K_1)\cap  T^*_{X_1}X_1\times T^*X_{23} &\subset &T^*_{X_{123}}X_{123}.
	\eneqn
	The sheaf $K_1$ being cohomologically constructible on $X_{12}$, the sheaf $\opb{q_{12}}K_1\simeq K_1\etens\cor_{X_3}$ is 
	cohomologically constructible on $X_{123}$. Applying again~\cite{KS90}*{Cor.~6.4.3}, we get
	\eqn
	\rhom(\opb{q_{12}}\RD'_{12}K_1,\opb{q_{23}}K_2) &\simeq&\RD'_{123}\opb{q_{12}}\RD'_{12}K_1\ltens \opb{q_{23}}K_2\\
	&\simeq&\opb{q_{12}}K_1\ltens \opb{q_{23}}K_2.
	\eneqn
	To conclude, note that 
	\eqn
	\rhom(\RD'_{12}K_1, \roim{q_{12}}\opb{q_{23}}K_2).&\simeq &\roim{q_{12}}\rhom(\opb{q_{12}}\RD'_{12}K_1,\opb{q_{23}}K_2)\\
	&\simeq &\roim{q_{12}}(\opb{q_{12}}K_1\ltens\opb{q_{23}}K_2).
	\eneqn
	Using~\eqref{eq:ssK23}, the proof is complete.
\end{proof}

\begin{theorem}\label{th:assocnp}
	Let $X_i$ \lp$i=1,2,3,4$\rp\, be four $C^\infty$-manifolds and let $K_i\in\Derb(\cor_{X_{i,i+1}})$ \lp $i=1,2,3$\rp. 
	Assume that
	$K_1$ is cohomologically constructible,  $q_1$ is proper on $\supp(K_1)$ and 
	$\SSi(K_1)\cap (T^*_{X_1}X_1\times T^*X_2)\subset T^*_{X_{12}}X_{12}$. Then
	\eqn
	&&K_1\npconv[2](K_2\npconv[3]K_3)\simeq (K_1\npconv[2]K_2)\npconv[3] K_3.
	\eneqn
\end{theorem}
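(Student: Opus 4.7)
The plan is to reduce both sides to a single pushforward from the $4$-fold product $X_{1234}$. Denote by $r_{ij}\cl X_{1234}\to X_{ij}$ and $r_{ijk}\cl X_{1234}\to X_{ijk}$ the obvious projections, and set $A\eqdot\opb{r_{12}}K_1$, $B\eqdot\opb{r_{23}}K_2$, $C\eqdot\opb{r_{34}}K_3$ in $\Derb(\cor_{X_{1234}})$. I will show that both compositions are isomorphic to
\[
P\eqdot\roim{r_{14}}(A\ltens B\ltens C).
\]

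For $(K_1\npconv[2]K_2)\npconv[3]K_3$, factor $r_{14}=q^{134}_{14}\circ r_{134}$. The hypothesis that $q_1$ is proper on $\supp(K_1)$ forces the fibers of $r_{134}$ restricted to $\supp(A\ltens B\ltens C)$ to lie in the compact sets $\{x_2:(x_1,x_2)\in\supp(K_1)\}$, so $\roim{r_{134}}\simeq\reim{r_{134}}$ on this sheaf; the same argument gives $\roim{q^{123}_{13}}\simeq\reim{q^{123}_{13}}$ on the integrand defining $K_1\npconv[2]K_2$, whence in particular $K_1\npconv[2]K_2\simeq K_1\conv K_2$. Proper base change for $\reim$ along the Cartesian square $(X_{1234},X_{134},X_{123},X_{13})$, combined with the $\reim$-projection formula used to extract the pullback factor $C$, rewrites $P$ as $\roim{q^{134}_{14}}(\opb{q^{134}_{13}}(K_1\npconv[2]K_2)\ltens\opb{q^{134}_{34}}K_3)=(K_1\npconv[2]K_2)\npconv[3]K_3$.

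For $K_1\npconv[2](K_2\npconv[3]K_3)$, factor $r_{14}=q^{124}_{14}\circ r_{124}$ and apply Lemma~\ref{le:assocnp} with the relabeled triple $(Y_1,Y_2,Y_3)\eqdot(X_1,X_{24},X_3)$, so $Y_{12}=X_{124}$, $Y_{23}\simeq X_{234}$ and $Y_{123}\simeq X_{1234}$. Take $L_1\eqdot\opb{q^{124}_{12}}K_1$ and $L_2\eqdot\opb{q^{234}_{23}}K_2\ltens\opb{q^{234}_{34}}K_3$. Since $q^{124}_{12}$ is a smooth projection, $L_1$ remains cohomologically constructible and $\SSi(L_1)=\SSi(K_1)\times T^*_{X_4}X_4$, so the microlocal hypothesis on $L_1$ follows directly from the one on $K_1$. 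The lemma yields $\roim{r_{124}}(A\ltens B\ltens C)\simeq\opb{q^{124}_{12}}K_1\ltens\roim{r_{124}}(B\ltens C)$, and smooth base change along the product Cartesian square $X_{1234}=X_{124}\times_{X_{24}}X_{234}$ identifies $\roim{r_{124}}(B\ltens C)$ with $\opb{q^{124}_{24}}(K_2\npconv[3]K_3)$. Applying $\roim{q^{124}_{14}}$ to both sides then recovers $K_1\npconv[2](K_2\npconv[3]K_3)$ by definition.

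The main technical point is the relabeling needed to invoke Lemma~\ref{le:assocnp} (treating $X_{24}$ as a single ``middle'' manifold) together with the verification that its microlocal hypothesis transfers to the pulled-back kernel $L_1$. Everything else amounts to standard bookkeeping with proper and smooth base change and the projection formula for $\reim$.
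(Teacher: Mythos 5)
Your proof is correct and follows essentially the same route as the paper's: both sides are identified with the pushforward of the triple tensor product from the big product space (your $P$ is exactly the middle term of the paper's chain of isomorphisms), with Lemma~\ref{le:assocnp} plus smooth base change handling one bracketing, and properness of $q_1$ on $\supp(K_1)$ plus the projection formula and proper base change handling the other. The only difference is organizational: the paper writes a single linear chain from one side to the other assuming $X_4=\rmpt$, whereas you meet in the middle and carry a general $X_4$ throughout via the relabeling $(X_1,X_{24},X_3)$, which is precisely the relabeling the paper alludes to in its closing remark.
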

\begin{proof}
	We shall assume for simplicity that $X_4=\rmpt$. Consider Diagram~\ref{diag:1234}. Then:
	\eqn
	K_1\npconv[2](K_2\npconv[3] K_3)
	&=&\roim{q_1}\bl K_1\ltens \opb{q_2}(K_2\npconv K_3)\br\\
	&=&\roim{q_1}\bl K_1\ltens \opb{q_2}\roim{r_1}(K_2\ltens\opb{r_2} K_3)\br\\
	&\simeq&\roim{q_1}\bl K_1\ltens\roim{q_{12}}\opb{q_{23}}(K_2\ltens\opb{r_2} K_3)\br\\
	&\simeq&\roim{q_1}\roim{q_{12}}\bl\opb{q_{12}}K_1\ltens \opb{q_{23}}K_2\ltens\opb{q_{23}}\opb{r_2} K_3\br\\
	&\simeq&\roim{p_1}\roim{q_{13}}\bl{}\opb{q_{12}}K_1\ltens \opb{q_{23}}K_2\ltens\opb{q_{13}}\opb{p_2} K_3\br\\
	&\simeq&\roim{p_1}\lp\reim{q_{13}}(\opb{q_{12}}K_1\ltens \opb{q_{23}}K_2)\ltens \opb{p_2} K_3\br\\
	&\simeq&\roim{p_1}\bl(K_1\cconv[2] K_2)\ltens\opb{p_2} K_3\br\simeq \roim{p_1}\bl(K_1\npconv[2] K_2)\ltens\opb{p_2} K_3\br.
	\eneqn
	In the first isomorphism, we have used   $\opb{q_2}\roim{r_1}\simeq \roim{q_{12}}\opb{q_{23}}$, which follows from  the isomorphism  $\epb{q_2}\roim{r_1}\simeq \roim{q_{12}}\epb{q_{23}}$.
	In the second isomorphism, we have used Lemma~\ref{le:assocnp}.
	In the fourth  isomorphism, we have used the fact that $q_{13}$ is proper on $\supp(\opb{q_{12}}K_1)$. Finally, in the sixth isomorphism we have again used  the fact that $q_{13}$ is proper on $\supp\opb{q_{12}}(K_1)$. 
	
	Note that the same proof holds without assuming $X_4=\rmpt$. In this case replace $X_i, X_{ij}$ and $X_{123}$ with 
	$X_{i4}, X_{ij4}$ and $X_{1234}$, respectively.
\end{proof}

\subsection{Monoidal presheaves}\label{sect:monoid}

We shall use the theory of monoidal categories and refer to~\cites{Kas95} and~\cite{KS06}*{Ch.~IV}. Note that 
\begin{itemize}
	\item
	monoidal categories are called tensor categories in~\cite{KS06},
	\item
	to a monoidal category $(\shc,\otimes)$ is naturally attached an isomorphism of functors  (\cite{KS06}*{Def.~4.2.1})
	$\asso(X,Y,Z)\cl(X\otimes Y)\otimes Z\isoto X\tens(Y\otimes Z)$ satisfying the usual compatibility conditions,
	\item
	to a monoidal category with unit  $(\shc,\otimes,\un)$ are naturally attached two functorial isomorphisms
	$\run\cl X\otimes \un\to X$ and $\lun\cl \un\otimes X\to X$, denoted respectively $\alpha$ and $\beta$ in~\cite{KS06}*{Lem.~4.2.6}.
\end{itemize} 
\begin{example}
	(i) We regard the ordered set $(\R, \leq)$ as a category  that we simply denote by $\R$ and we regard $\R_{\geq0}$ as a full subcategory.
	The categories $\R$ and $\R_{\geq0}$ endowed with the addition map $+$ are monoidal categories with unit, denoted 
	$(\R,+)$ and  $(\R_{\geq0},+)$, respectively.
	
	\spa
	(ii) Let $X$ be a good topological space. The category $(\Derb(\cor_{X\times X},\conv))$ is a monoidal category with unit the sheaf $\cor_\Delta$. 
	
	\spa
	(iii) If $\sha$ is a category, then the category $(\Fct(\sha, \sha),\conv)$ is a monoidal category with unit the object $\id_\sha$. 
\end{example}

Let $I$ be a closed  interval of $\R$. We assume
\eq\label{hyp:I}
&&\parbox{70ex}{either $I=[0,\alpha]$ or $I=[-\alpha,\alpha]$ for some $\alpha>0$.
}\eneq
We consider $I$ as an ordered set and we denote by $I_\leq$ the associated category,  a full subcategory  of $(\R,\leq )$. 
Hence, $\Ob(I_\leq)=I$ and $\Hom[I_\leq](a,b)=\rmpt$ or $=\varnothing$ according whether $a\leq b$ or not.
Although it has not been precisely defined,  we shall look  at $I_\leq$ as a ``partially monoidal subcategory of $(\R,+)$''.

Let $(\shc,\otimes)$ be  a monoidal  category and consider a presheaf $K$ on $I_\leq$ with values in $\shc$. For $a\in I$, we write $K_a$ instead of $K(a)$. Hence, we have ``restriction'' morphisms $\rho_{a,b}\cl K_b\to K_a$ for $a,b\in I, a\leq b$ satisfying the usual compatibility relations $\rho_{a,b}\circ \rho_{b,c}=\rho_{a,c}$ for $a\leq b\leq c$ and $\rho_{a,a}=\id$.

\begin{definition}\label{def:pshK}
	Let $(\shc,\otimes,\un)$ be a monoidal category with unit.
	\banum
	\item
	A monoidal presheaf $(K,\phi_0,\phi_2)$ on $I_\leq$ with values in $\shc$ is the data of :
	
	\spa
	(1) a presheaf $K$ on $I_\leq$ with value in $\shc$,
	
	\spa
	(2) an isomorphism $ \phi_0 \cl {\bf 1} \isoto K_0$,
	
	\spa
	(3) an isomorphism $ \phi_2(a,b) \colon K_a\otimes K_b\isoto K_{a + b}$, for $a,b$ such that $a,b,a+b\in I$, 
	
	these data satisfy the following conditions:
	
	\spa
	(i) the diagram below commutes for all   $a,b,a',b'\in I$ such that $a\leq a'$, $b\leq b'$, $a,b,a',b', a+b,a'+b'\in I$:
	\eqn
	&&\xymatrix@C=1,5cm{
		K_{a'}\otimes K_{b'}\ar[d]_-{\rho_{a, a^\prime} \otimes \rho_{b, b^\prime}}\ar[r]_\sim^-{\phi_2(a^\prime,b^\prime)} &K_{a'+b'}\ar[d]^-{\rho_{a+b,a^\prime+b^\prime}}\\
		K_{a}\otimes K_{b}\ar[r]_\sim^-{\phi_2(a,b)}           &K_{a+b}.
	}
	\eneqn
	Here, the vertical arrows are induced by the restriction morphisms.
	
	\spa
	(ii) For all $a, b, c \in I$ such that $a+b, b+c, a+b+c \in I$, the diagram below commutes
	\eqn
	&&\xymatrix@C=1cm{
		(K_a \otimes K_b) \otimes K_c \ar[rr]^-{\asso({K_a , K_b, K_c})} \ar[d]_-{\phi_2(a,b) \otimes \id}  && K_a \otimes( K_b \otimes K_c) \ar[d]^-{\id \otimes \phi_2(b,c)} \\
		K_{a+b} \otimes K_c \ar[d]_-{\phi_2(a+b,c)} && K_a \otimes K_{b+c} \ar[d]^-{\phi_2(a,b+c)}\\
		K_{a+b+c}  \ar@{=}[rr] &&  K_{a+b+c}\, .
	}\eneqn
	
	\spa
	(iii) For all $a \in I$, the diagrams below commute
	\eqn
	&&\xymatrix@C=1cm{
		\un \otimes K_a \ar[r]^-{\lun_{K_a}} \ar[d]_-{\phi_0 \otimes \id_{K_a}}  & K_a &&  K_a \otimes \un  \ar[r]^-{\run_{K_a}} \ar[d]_-{ \id_{K_a} \otimes \phi_0} & K_a\\
		K_0 \otimes K_a \ar[r]^-{\phi_2(0,a)}& K_a, \ar@{=}[u]               && K_a \otimes K_0 \ar[r]^-{\phi_2(a,0)}& K_a.\ar@{=}[u]
	}
	\eneqn
	\item
	Let $K$ and $K^\prime$ be two monoidal presheaves on $I_\leq$.  A morphism of  monoidal  presheaves $\eta \colon K \to K^\prime$ is a morphism such that for every $a, b \in I$ such that $a+b \in I$ the following diagram commutes
	\eqn
	&&\xymatrix@C=1,5cm{
		K_a \otimes K_b \ar[r]^-{\eta_a \otimes \eta_b} \ar[d]_-{\phi_2(a,b)} & K^\prime_a \otimes K^\prime_b \ar[d]^-{\phi^\prime_2(a,b)}\\
		K_{a+b} \ar[r]^-{\eta_{a+b}} & K^\prime_{a+b}. 
	}\eneqn
	\item
	We denote by $\Fun^\otimes(I^\op,\shc)$ the category whose objects are the monoidal presheaves on $I_\leq$ with values in $\shc$ and the morphisms are the morphisms of  monoidal  presheaves.
	\eanum
\end{definition}
Assuming that $I=[0,\alpha]$, 
the inclusion functor $i_{\alpha} \colon I_\leq\into\R_{\geq 0}$ induces a functor
\eq\label{eq:ialpha1}
\spb{i_\alpha} \colon \Fun^{\otimes}(\R_{\geq 0}^\op,\shc) \to \Fun^{\otimes}(I^\op,\shc), \; F \mapsto F \conv i_\alpha.
\eneq
Similarly, if  $I=[-\alpha,\alpha]$,  the inclusion functor $j_{\alpha} \colon I_\leq\into\R_{\geq 0}$ induces a functor
\eq\label{eq:ialpha2}
\spb{j_{\alpha}} \colon \Fun^{\otimes}(\R^\op,\shc) \to \Fun^{\otimes}(I^\op,\shc), \; F \mapsto F \conv j_\alpha.
\eneq

\begin{theorem}\label{th:pshK}
	Assuming that $I=[0,\alpha]$,
	the functor $\spb{i_\alpha}$ in~\eqref{eq:ialpha1} is an equivalence of categories. 
	Similarly, assuming that $I=[-\alpha,\alpha]$,
	the functor $\spb{j_\alpha}$ in~\eqref{eq:ialpha2} is an equivalence of categories. 
\end{theorem}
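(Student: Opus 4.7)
The plan is to exhibit an explicit quasi-inverse $E$ to $\spb{i_\alpha}$; the case of $\spb{j_\alpha}$ is strictly parallel, using that for any $t \in \R$ and every sufficiently large $n \in \Z_{>0}$ one has $t/n \in [-\alpha, \alpha]$. The key observation is that for a monoidal presheaf $(K, \phi_0, \phi_2)$ on $I_\leq$, every pair $(t, n)$ with $t \geq 0$ and $t/n \in I$ (call such an $n$ \emph{admissible for $t$}) produces, by iterating $\phi_2$ under a fixed left-to-right parenthesization, an object that I denote $K_{t/n}^{\otimes n}$; and for two admissible $n, m$, iterating $\phi_2$ once more furnishes a canonical refinement isomorphism $K_{t/n}^{\otimes n} \simeq K_{t/(nm)}^{\otimes nm} \simeq K_{t/m}^{\otimes m}$.

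\emph{Essential surjectivity.} For each $t \in \R_{\geq 0}$ I would pick an admissible $n(t)$, taking $n(t) = 1$ on $I$, and set $E(K)_t \eqdot K_{t/n(t)}^{\otimes n(t)}$ and $\tilde\phi_0 \eqdot \phi_0$. Given $s \leq t$, the $n$-fold tensor product of $\rho_{s/n, t/n} \cl K_{t/n} \to K_{s/n}$ (for any $n$ admissible for $t$, hence for $s$), followed by the refinement isomorphisms to $n(s)$ and $n(t)$, defines $\tilde\rho_{s, t}$; the multiplication $\tilde\phi_2(s, t)$ is obtained by passing via refinement to a common $n$ admissible for $s$, $t$ and $s+t$, where it reduces to pure reassociation in $(\shc, \otimes)$. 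Transitivity of $\tilde\rho$, the pentagon/triangle axioms for $(\tilde\phi_0, \tilde\phi_2)$, and their naturality with respect to $\tilde\rho$ all reduce, by Mac Lane's coherence theorem and axioms (i)--(iii) for $K$, to identities verified on sub-refinements inside $I$. Since $n(t) = 1$ on $I$, $\spb{i_\alpha} E(K) = K$ on the nose.

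\emph{Full faithfulness.} Given monoidal presheaves $L, L'$ on $\R_{\geq 0}$ and a morphism $\eta \cl \spb{i_\alpha} L \to \spb{i_\alpha} L'$, I would define $\tilde\eta_t \cl L_t \to L'_t$ for arbitrary $t \geq 0$ as the unique arrow making commute the square whose top edge is $\eta_{t/n}^{\otimes n}$ and whose vertical sides are the iterated $\phi_2$-isomorphisms $L_{t/n}^{\otimes n} \isoto L_t$ and $(L')_{t/n}^{\otimes n} \isoto L'_t$, for any admissible $n$. Monoidality of $\eta$ on $I_\leq$ makes this square independent of $n$ and ensures that $\{\tilde\eta_t\}_{t \geq 0}$ is a morphism of monoidal presheaves; conversely any extension of $\eta$ must satisfy the same square, whence uniqueness.

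The hard part will be the coherence bookkeeping underlying $E$: showing that the refinement isomorphisms $K_{t/n}^{\otimes n} \simeq K_{t/m}^{\otimes m}$ are transitive (compared through $\operatorname{lcm}(n, m)$ for two, and through a common multiple for three, each step an instance of the pentagon axiom (ii)), are natural in $t$ with respect to the restrictions (axiom (i)), and are compatible with taking sums (again (ii) combined with the associativity of addition in $\R$). Once these are in place, all remaining verifications are formal diagram chases dictated by Mac Lane's coherence theorem, and the only arithmetic input is that the admissible integers for a given $t$ form an upward-cofinal subset of $\Z_{>0}$ under divisibility.
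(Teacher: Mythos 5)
Your strategy is the same as the paper's in spirit --- extend $K$ beyond $I$ by an explicit formula built from $\phi_2$, then check coherence --- but the decomposition is genuinely different. The paper fixes $\lambda=\alpha/2$, writes $a=n\lambda+r_a$ with $0\leq r_a<\lambda$, and sets $\stk_a=K_\lambda^{\otimes n}\otimes K_{r_a}$; you instead use the equal subdivision $K_{t/n}^{\otimes n}$ for a varying admissible $n$, and compare different choices of $n$ through common multiples. Your version buys a strict equality $\spb{i_\alpha}E(K)=K$ and a cleaner uniqueness argument for fullness, at the cost of having to prove that the refinement isomorphisms are transitive and compatible with restrictions and sums; the paper's version avoids the choice of $n$ altogether but must still verify the analogous compatibilities for its fixed decomposition (which it declares ``straightforward to check''). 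So both proofs carry essentially the same coherence burden, just organized differently.

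One point in your sketch is misstated and happens to be exactly where the paper does its real work. After refining to a common admissible $n$, the map $K_{s/n}^{\otimes n}\otimes K_{t/n}^{\otimes n}\to K_{(s+t)/n}^{\otimes n}$ is \emph{not} ``pure reassociation in $(\shc,\otimes)$'': the word $(s/n)^n(t/n)^n$ must be reshuffled into $\bigl((s/n)(t/n)\bigr)^n$, and since $\shc$ is not assumed symmetric or braided this requires the commutation isomorphisms $K_a\otimes K_b\simeq K_{a+b}\simeq K_b\otimes K_a$, available only because $a+b=(s+t)/n\in I$. These are precisely the maps $s_i$ and $\psi_{i,n}$ in the paper's proof. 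Consequently, Mac Lane's coherence theorem alone does not dispose of the remaining diagrams --- it governs associators and unitors, not these swaps --- and the independence of your construction from the order in which the swaps are performed must be deduced from axiom (ii) of Definition~\ref{def:pshK} (all composites landing in some $K_{c}$ with $c\in I$ agree). With that correction your argument goes through.
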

\begin{proof}
	(A) Let us first treat the case $I=[0,\alpha]$.
	
	\spa
	It follows from \cite{Kas95}*{Ch XI.5} that we can assume that $\shc$ is a strict monoidal category. We set $\lambda=\frac{\alpha}{2}$.
	
	\spa
	(i) We start by showing that $\spb{i_\alpha}$ is essentially surjective. For that purpose, given a monoidal presheaf $K$ on $I$, 
	we will construct a monoidal presheaf $\stk \colon \R_{\geq 0} \to \shc$  such that $\spb{i_\alpha} \stk \simeq K$.
	
	\spa 
	(i)--(a)  For $a\geq0$ we write $a= n {\lambda}+r_a$ with $0\leq r_a<\lambda$. Then, one sets 
	\eq\label{def:stka}
	&&\stk_a \eqdot \underbrace{K_{\lambda}\otimes\cdots\otimes K_{\lambda}}_n\otimes\ K_{r_a}.
	\eneq
	
	\spa
	(i)--(b)  We now construct the restriction morphisms $\rho_{a,b}$.
	For $a\leq b \leq \lambda$,  $\rho_{a,b}$ is given by the definition of the presheaf $K$.
	Let us write  $a= m \cdot \lambda +r_a$ and $b=n\cdot  \lambda +r_b$ with $0 \leq  r_a , r_b<\lambda$.
	Since $0 \leq a \leq b$, $m\leq n$. If $m=n$, then $r_a\leq r_b$ and we set 
	$\rho_{a,b}\eqdot  ( \id_{K_\lambda})^{\conv m} \conv \rho_{r_a,r_b}$.
	
	\spa
	Now assume  $m>n$. Notice that 
	\eqn
	&&\stk_b \simeq (K_{\lambda})^{\conv m} \conv K_{\lambda} \conv  (K_{\lambda})^{\conv (n-m-1)} \conv K_{r_b}\\
	&&\stk_a \simeq (K_{\lambda})^{\conv m} \conv K_{r_a} \conv (K_0)^{\conv (n-m-1)} \conv K_0.
	\eneqn
	Hence, we set $\rho_{a,b}\eqdot ( \id_{K_\lambda})^{\conv m} \conv \rho_{r_a,\lambda} \conv (\rho_{0,\lambda})^{\conv n-m-1} \conv \rho_{0,r_b}$.
	
	\spa
	(i)--(c) Let us construct the isomorphisms $ \phi_2(a_1,a_2) \colon \stk_{a_1}\otimes \stk_{a_2} \to \stk_{a_1+a_2}$, for $a_1,a_2 \in \R_{\geq 0}$. Write
	\eqn
	&&a_i=n_i \cdot \alpha +r_i,\quad 0 \leq r_i < \lambda,\quad i=1, 2.
	\eneqn
	Since $r_{i} + \lambda \leq \alpha$, \; $K_{r_{i}}\otimes K_\lambda \stackrel{\phi_2(r_i,\lambda)}{\simeq} K_{r_{i} + \lambda}  \stackrel{\phi^{-1}_2(\lambda,r_i)}{\simeq} K_\lambda \otimes K_{r_{i}}$. We set 
	\eqn
	s_i\eqdot \phi^{-1}_2(\lambda,r_i) \circ \phi_2(r_i,\lambda)
	\eneqn
	
	Let $n \in \N$ and consider the map
	\eqn
	\psi_{i,n} \eqdot (\id_{K_\lambda}^{\otimes n-1} \otimes s_i) \circ \ldots \circ (\id_{K_\lambda}^{\otimes p} \otimes s_i \otimes \id_{K_\lambda}^{\otimes n-1-p}) \circ \ldots \circ (s_i \otimes \id_{K_\lambda}^{\otimes n-1}).
	\eneqn
	
	We now define the map $ \phi_2(a_1,a_2) \colon \stk_{a_1} \otimes \stk_{a_2} \to \stk_{a_1+a_2}$ by setting 
	\eqn
	\phi_2(a_1,a_2)  \eqdot (\id_{K_{\lambda}^{\otimes (n_1+n_2)}} \otimes \phi_2(r_1,r_2))\circ(\id_{K_{\lambda}}^{\otimes n_1} \otimes \psi_{1,n_2} \otimes \id_{K_{r_2}}). 
	\eneqn
	By construction, $\phi_2(a_1,a_2) $ is an isomorphism.

	It is straightforward  to check that  $\stk$ is a monoidal presheaf on $\R_{\geq 0}$ and that  $\spb{i_\alpha}\stk \simeq K$.
	
	\spa
	(ii)-(a) Let us prove that  $\spb{i_\alpha}$ is faithful. Let $f,g \colon \stk \to \stk^\prime$ be two monoidal morphisms between monoidal presheaves on $\R_{\geq 0}$. Assume that $\spb{i_\alpha}(f)=\spb{i_\alpha}(g)$. Hence, for every $0 \leq a \leq \alpha$, $f_a=g_a$ and it follows from the definition of a monoidal morphism that for every $b \in \R_{\geq 0}$, $f_b=g_b$.  
	
	\spa
	(ii)-(b) Let us show that $\spb{i_\alpha}$ is full.
	Let $\stk, \stk^\prime \in \Fun^\otimes(\R_{\geq 0}^\op, \shc)$ and let $f\cl \spb{i_\alpha}\stk \to \spb{i_\alpha}\stk^\prime$ be a monoidal morphism. For $a \in \R_{\geq 0}$,  we write $a= n {\lambda}+r_a$ with $0\leq r_a<\lambda$. We define the morphism $\stf_a$ as the composition
	\eqn
	&&\xymatrix{
		\stk_a \simeq \stk_\lambda^{\otimes n} \otimes \stk_{r_a}\ar[rr]^{f_{\lambda}^{\otimes n} \otimes f_{r_a}}&&\stk_\lambda^{\prime \otimes n} \otimes \stk^\prime_{r_a} \simeq \stk^\prime_a.
	}\eneqn
	The family of morphisms $(\stf_a)_{a \in \R_{\geq 0}}$ defines a monoidal morphism $\stf:\stk \to \stk^\prime$ such that $i_\alpha(\stf)=f$.
	
	\spa
	(B) Assume now that $I=[-\alpha,\alpha]$.
	Part (A) of the proof applies when replacing the interval $[0,\alpha]$ and $\R_{\geq0}$ with 
	the interval $[-\alpha,0]$ and $\R_{\leq0}$. Then combine these two cases. 
\end{proof}

\subsection{Thickening kernels and interleaving distance}

Let us first recall that  a categorical axiomatic for interleaving distances was developed in~\cites{Bub14, SMS18}. 
Here, we do not work in an abstract categorical setting but restrict ourselves  to the study of kernels for sheaves,
a natural framework for applications.

\begin{definition}\label{def:thickdiag}
	Let $X$ be a good topological space.
	\banum
	\item
	A thickening kernel  is a  monoidal presheaf $\stK$ on $(\R_{\geq0},+)$ 
	with values in the monoidal category 
	$(\Derb(\cor_{X\times X}),\conv)$. 
	\item
	The thickening kernel $\stK$  is a bi-thickening kernel if it extends as a monoidal presheaf on $(\R,+)$.
	\eanum
\end{definition}
In the sequel, for a thickening (resp.\ a bi-thickening) kernel $\stK$, one sets $\stK_a=\stK(a)$ for $a\geq0$
(resp.\  for $a\in\R$). 

In other words,
a thickening kernel is a family of kernels $\stK_a\in\Derb(\cor_{X\times X})$ satisfying 
\eqn
&&\stK_a\conv\stK_b\simeq\stK_{a+b},\quad \stK_0\simeq\cor_{\Delta}\mbox{ for }a\in\R_{\geq0}
\eneqn
and the compatibility conditions of Definition~\ref{def:pshK}. 

We shall often simply write  ``a thickening'' instead of   ``a thickening kernel ''. 
\begin{remark}\label{rem:unithick}
	Let $I=[0,\alpha]$. Note that if the thickening (or the bi-thickening) $\stK$ exists, then it is uniquely defined by its restriction to $[0,\alpha]$, 
	up to isomorphism. This last isomorphism is unique in the following sense.
	
	Denote by  $K_I$  the monoidal presheaf 
	$a\mapsto\cor_{\Delta_a}$ on $I_\leq$. 
	Given two  thickenings $\stK_1$ and $\stK_2$ and isomorphisms of monoidal presheaves 
	\eqn
	&&\theta\cl \stK_1\vert_I\isoto K_I,\quad \theta'\cl \stK_2\vert_I\isoto K_I,
	\eneqn
	then there exists a unique isomorphism of monoidal presheaves $\lambda\cl\stK_1\isoto\stK_2$ such that 
	$\lambda\vert_I=\opb{\theta'}\circ\theta$. (Here we use the notation $\scbul\vert_I$ instead of $\spb{i_\alpha}$ as in~\eqref{eq:ialpha1}.)
\end{remark}

\begin{example}\label{exa:monoidpsh}
	(i) The constant presheaf $a\mapsto \cor_\Delta$ is a thickening kernel  called the constant thickening on $X$ and simply denoted $\cor_{\Delta}$ (or $\cor_{\Delta_X}$ if necessary).
	
	\spa
	(ii) Let $X_i$ ($i=1,2$) be two good topological spaces and let $\stK_i$  be a thickening kernel  on $X_i$. 
	Then 
	$\stK_1\etens\stK_2$ is a  thickening kernel  on $X_1\times X_2$. This applies in particular when $\stK_i$ is the constant thickening  on $X_1$ or $X_2$.
	
	\spa
	(iii) Let $(X,d_X)$ be a  metric space. We shall prove in Theorem~\ref{th:thick1} below that, under suitable hypotheses, there exists a thickening kernel 
	$\stK$ with $\stK_a=\cor_{\Delta_a}$ for $0\leq a\leq\alpha_X$.
	For $S$  a good topological space, 
	we  sometimes denote by $\stK_{S\times X/S}$ the thickening kernel  $\cor_{\Delta_S}\etens\stK$.
	
	\spa
	(iv) Another example of a thickening kernel   will be given in \S~\ref{subsect:negatisotop} in which we use the kernel of~\cite{GKS12} associated with a  Hamiltonian isotopy.
\end{example}

The next definition is mimicking~\cite{KS18}*{Def.~2.2}.

\begin{definition}\label{def:convdist}
	Let $\stk$ be a thickening kernel on $X$,
	let $F, G \in \Derb(\cor_X)$ and let $a\geq0$.
	\banum
	\item
	One says that $F$ and $G$ are {\em $a$-isomorphic} if there are morphisms 
	$f\cl \stK_a\conv F\to G$ and $g\cl \stK_a\conv G\to F$ 
	which satisfy the following compatibility conditions:
	the composition 
	\begin{equation*}
		\stK_{2a}\conv F\To[\stK_{a} \conv f] \stK_a\conv G\To[g] F
	\end{equation*}
	and the composition 
	\begin{equation*}
		\stK_{2a}\conv G\To[\stK_{a} \conv g] \stK_a\conv F\To[f] G
	\end{equation*}
	coincide with the morphisms induced by the canonical morphism $\rho_{0,2a} \colon \stk_{2a} \to \stk_0$.
	\item
	One sets
	\begin{equation*}
		\dist_\stK(F,G)=\inf\Bigl(\{+\infty\}\cup\{a\in\R_{\geq0}\,;\,
		\text{$F$ and $G$ are $a$-isomorphic}\}\Bigr)
	\end{equation*}
	and calls $\dist_\stK(\scbul,\scbul)$ the interleaving distance (associated with $\stK$).
	\eanum
\end{definition}
Note that if $F$ and $G$ are $a$-isomorphic, then
they are $b$-isomorphic for any $b\ge a$. 

The next result show that the interleaving distance $\dist_\stK$ is a pseudo-distance on  $\Derb(\cor_X)$.
\begin{proposition}\label{pro:convdist}
	Let  $\stK$  be  a thickening kernel on $X$ and let 
	$F,G,H\in \Derb(\cor_X)$. Then 
	\bnum
	\item $F$ and $G$ are $0$-isomorphic if and only if $F\simeq G$,
	\item
	$\dist_\stK(F,G)=\dist_\stK(G,F)$, 
	\item
	$\dist_\stK(F,G)\leq \dist_\stK(F,H)+\dist_\stK(H,G)$.
	\enum
\end{proposition}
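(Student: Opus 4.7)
The proof splits naturally along the three assertions. Claim (i) is essentially tautological: the unit isomorphism $\phi_0 \colon \cor_\Delta \isoto \stK_0$ together with $\lun$ identifies $\stK_0 \conv F$ with $F$, and similarly for $G$; under these identifications, $f$ and $g$ become morphisms $\tilde f \colon F \to G$ and $\tilde g \colon G \to F$. Since $\rho_{0,0} = \id$, the two compatibility conditions in Definition \ref{def:convdist} reduce to $\tilde g \circ \tilde f = \id_F$ and $\tilde f \circ \tilde g = \id_G$, so $\tilde f$ is an isomorphism. Conversely, any isomorphism $F \simeq G$ and its inverse supply $0$-isomorphism data. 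Claim (ii) is immediate: interchanging $(f, g, F, G)$ with $(g, f, G, F)$ is an involution on $a$-isomorphism data.

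Claim (iii) is the substantive part. I would prove the sub-additivity statement that $F$ is $(a+b)$-isomorphic to $G$ whenever $F$ is $a$-isomorphic to $H$, with data $(f_1, g_1)$, and $H$ is $b$-isomorphic to $G$, with data $(f_2, g_2)$. The natural candidate for $(a+b)$-isomorphism data between $F$ and $G$ is
\begin{equation*}
f := f_2 \circ (\stK_b \conv f_1) \circ \phi_2(b,a)^{-1}_F, \qquad g := g_1 \circ (\stK_a \conv g_2) \circ \phi_2(a,b)^{-1}_G,
\end{equation*}
where $\phi_2(b,a)_F$ denotes $\phi_2(b,a) \conv \id_F$. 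It then remains to verify the compatibility condition: the composite $g \circ (\stK_{a+b} \conv f) \circ \phi_2(a+b, a+b)^{-1}_F \colon \stK_{2(a+b)} \conv F \to F$ should agree with the morphism induced by $\rho_{0, 2(a+b)}$, and symmetrically starting from $G$.

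This verification is a coherence computation. As in the proof of Theorem \ref{th:pshK}, I would invoke Mac Lane's theorem to reduce to the case where $\shc$ is strict monoidal, so that only the $\phi_2$'s and the $\rho$'s remain to be tracked. Expanding the composite and using bifunctoriality of $\conv$ together with the naturality squares (i) of Definition \ref{def:pshK}, one reorganises it so as to factor through $\stK_a \conv \stK_b \conv \stK_b \conv \stK_a \conv F$ and take the form $g_1 \circ \bigl( \stK_a \conv [\, g_2 \circ (\stK_b \conv f_2) \,] \bigr) \circ (\stK_a \conv \stK_b \conv \stK_b \conv f_1)$. The inner bracket collapses to the $\rho_{0, 2b}$-induced morphism on $H$ by the $b$-compatibility of $(f_2, g_2)$; pushing this restriction outward via naturality of $\rho$ and then applying the $a$-compatibility of $(f_1, g_1)$ collapses the outer factors as well. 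The remaining convolution of two restrictions composes to $\rho_{0, 2(a+b)} \conv \id_F$ by the presheaf cocycle relation combined with condition (i) of Definition \ref{def:pshK}. The symmetric verification starting from $G$ is obtained by interchanging the roles of $(f_1, g_1)$ and $(f_2, g_2)$.

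The main obstacle is purely the bookkeeping in this diagram chase; the strict monoidal reduction is essential to keep the associators from multiplying the number of arrows. Once $F$ is shown to be $(a+b)$-isomorphic to $G$ whenever $F \sim_a H$ and $H \sim_b G$, taking infima over such $(a,b)$ yields the triangle inequality $\dist_\stK(F,G) \leq \dist_\stK(F,H) + \dist_\stK(H,G)$.
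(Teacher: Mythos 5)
Your proof is correct and is exactly the standard argument that the paper omits (its proof reads only ``The proof is straightforward''): identify $\stK_0\conv(\scbul)$ with the identity via $\phi_0$ for (i), use the symmetry of the definition for (ii), and compose interleaving data through $\stK_{a+b}\simeq\stK_a\conv\stK_b$ for (iii), with the compatibility check reducing to the two given compatibilities plus naturality of the $\rho$'s and condition (i) of Definition~\ref{def:pshK}. Nothing further is needed.
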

The proof is straightforward.

\begin{remark}
	It is proved in \cite{PSW21} that if $X_\infty$ is a b-analytic manifold (see \cite{Sc20})  endowed with a good distance, the pseudo-distance 
	$\dist_\stK$ becomes a distance when restricted to the category $\Derb_{\Rc}(\cor_{X_\infty})$ of  sheaves constructible up to infinity. In particular, on any real analytic manifold $X$,  $\dist_\stK$ becomes a distance when restricted to constructible sheaves with compact support. 
	Let us also mention the paper~\cite{CR19} in which it is shown that the category  $\Derb_{\Rc}(\cor_{X})$ is not metrically complete.
\end{remark}

\section{The interleaving distance on metric spaces}\label{sect:metric}
From now on and until the end of this section, unless otherwise stated, we assume that $X$ is a good topological space.

\subsection{Thickening of the diagonal} 

Let $(X,d_X)$ be a metric space. For $a\geq0$, $x_0\in X$, set
\eq\label{eq:BDG}
&&\left\{ \ba{l}B_a(x_0)=\{x\in X;d_X(x_0,x)\leq a\},\\
B^\circ_a(x_0)=\{x\in X;d_X(x_0,x)< a\},\, (\mbox{here, $a>0$}),\\
\Delta_a=\{(x_1,x_2)\in X\times X;d_X(x_1,x_2)\leq a\},\\
\Delta^\circ_a=\{(x_1,x_2)\in X\times X;d_X(x_1,x_2)<a\},\, (\mbox{here, $a>0$}),\\
Z=\{(x_1,x_2,t)\in X\times X\times\R_{\geq0};d_X(x_1,x_2)\leq t, \, t<\alpha_X\},\\
\Omega^+=\{(x_1,x_2,t)\in X\times X\times\R_{>0};d_X(x_1,x_2)< t,\, t<\alpha_X\}.
\ea\right.\eneq
\begin{definition}\label{def:goodmetricsp}
	A metric space $(X,d_X)$ is good if the underlying topological space is good and moreover there exists some  $\alpha_X>0$
	such that for all $0\leq a,b$ with $a+b\leq\alpha_X$, one has
	\eq\label{hyp:dist1}
	&&\left\{\parbox{65ex}{
		(i) for any $x_1,x_2\in X$, $B_a(x_1)\cap B_b(x_2)$ is contractible or empty (in particular, for any $x\in X$, $B_a(x)$ is contractible),\\
		(ii) the two projections $q_1$ and $q_2$ are proper on $\Delta_a$,\\
		(iii) $\Delta_a\conv\Delta_b=\Delta_{a+b}$.
	}\right.
	\eneq
\end{definition}
Clearly, in this definition, $\alpha_X$ is not unique.
In the sequel, if we want to  mention which $\alpha_X$ we choose, we denote the good metric space by 
$(X,d_X,\alpha_X)$.

Let $U$ be an open  subset of a real $C^0$-manifold $M$. Recall (see~\cite{KS90}*{Exe.~III.4}) that $U$ is 
locally cohomologically trivial (l.c.t. for short) in $M$ if for each $x\in \ol{U}\setminus U$, $(\rsect_{\ol U}(\cor_M))_x\simeq0$ and 
$(\rsect_{U}(\cor_M))_x\simeq\cor$. 

We shall say that  $U$  is locally topologically convex  (l.t.c. for short)  in $M$ if each $x\in M$ admits an open neighborhood $W$ such that there exists a topological isomorphism $\phi\cl W\isoto V$, with $V$ open in a real vector space, such that $\phi(W\cap U)$ 
is convex.
Clearly, if $U$ is  l.t.c. then it is l.c.t.

Moreover,  the natural morphism  
$\cor_U\to \cor_M$ defines a section of $\Hom(\cor_U,\cor_M)\simeq\Hom(\cor_U\tens\cor_{\ol U},\cor_M)$, hence defines the morphisms:
\eqn
&&\cor_U\to\RD'_M\cor_{\ol U},\quad \cor_{\ol U}\to\RD'_M\cor_{U}.
\eneqn										
When $U$ is  l.c.t., then these morphisms are isomorphisms.
If moreover,  $U$ is  l.t.c.,  then these sheaves are cohomologically constructible.

We shall also encounter the hypotheses:
\eq\label{hyp:dist2}
&&\left\{\parbox{65ex}{
	The good metric  space  $X$ is a $C^0$-manifold and 
	\banum
	\item
	for   $0<a\leq\alpha_X$, the set $\Delta_a^\circ$ is l.t.c. in $X\times X$,
	\item
	the set $\Omega^+$ is l.t.c.  in $X\times X\times]-\infty,\alpha_X[$.
	\item
	For  $x,y\in X$, setting $Z_{a}(x,y)=B_a(x)\cap B^\circ_a(y)$, one has $\rsect(X;\cor_{Z_{a}(x,y)})\simeq0$ for $x\neq y$ and  $0<a\leq\alpha_X$. 
	\eanum
}\right.
\eneq
%
Therefore
\begin{lemma}\label{lem:add2}
	Let $(X,d_X)$ be a good metric space  satisfying~\eqref{hyp:dist2}.
	\banum
	\item 
	The sheaves $\cor_{\Delta_a}$ and $\cor_{\Delta^\circ_a}$  are 
	cohomologically constructible and dual one to each other  for the duality functor $\RD'_{X\times X}$.
	\item 
	The sheaves $\cor_Z$ and $\cor_{\Omega^+}$ are 
	cohomologically constructible and  dual one to each other  for the duality functor $\RD'_{X\times X\times\R}$.
	\eanum
\end{lemma}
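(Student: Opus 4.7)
The plan is to reduce both statements to the general principle recalled immediately before the lemma: if $U$ is l.t.c.\ in a $C^0$-manifold $M$, then $U$ is l.c.t., the natural morphisms $\cor_U\to \RD'_M\cor_{\overline U}$ and $\cor_{\overline U}\to \RD'_M\cor_U$ are isomorphisms, and both $\cor_U$ and $\cor_{\overline U}$ are cohomologically constructible. In each case I only need to verify the hypotheses and identify the relevant closure.

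For part (a), I would take $M=X\times X$ and $U=\Delta_a^\circ$. The l.t.c.\ property is exactly hypothesis~\eqref{hyp:dist2}(a), so the general principle produces cohomologically constructible sheaves $\cor_{\Delta_a^\circ}$ and $\cor_{\overline{\Delta_a^\circ}}$, mutually dual under $\RD'_{X\times X}$. It then suffices to check $\overline{\Delta_a^\circ}=\Delta_a$. This is local and follows from the l.t.c.\ hypothesis: at a boundary point the chart provided by~\eqref{hyp:dist2}(a) identifies $\Delta_a^\circ$ with a convex open set $V$, whose closure in the chart is the corresponding closed convex set; continuity of $d_X$ and the facts that $\Delta_a$ is closed and contains $\Delta_a^\circ$ force this local closure to coincide with the image of $\Delta_a$ in the chart.

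For part (b), I would work in the ambient $C^0$-manifold $X\times X\times\R$ and restrict to its open subset $W\eqdot X\times X\times{]-\infty,\alpha_X[}$, in which by hypothesis~\eqref{hyp:dist2}(b) the set $\Omega^+$ is l.t.c. The same closure argument as above, applied in charts, shows that $\overline{\Omega^+}=Z$ inside $W$ (using that $Z\subset W$ is closed in $W$, being cut out by $d_X(x_1,x_2)\leq t$ with $t<\alpha_X$). Applying the general principle in the manifold $W$ gives that $\cor_{\Omega^+}$ and $\cor_Z$ are cohomologically constructible and mutually dual under $\RD'_W$. Since both sheaves vanish on the complement of $W$, the duality and constructibility transfer unchanged to $X\times X\times\R$, yielding the claim for $\RD'_{X\times X\times\R}$.

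The only real work lies in the two closure identifications $\overline{\Delta_a^\circ}=\Delta_a$ and $\overline{\Omega^+}=Z$; once these are in hand, the lemma is a direct application of the l.t.c.\ $\Rightarrow$ l.c.t.\ duality principle already recorded in the text.
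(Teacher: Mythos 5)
Your reduction to the l.t.c.\ $\Rightarrow$ l.c.t.\ duality principle recalled just before the lemma is exactly the paper's (implicit) argument---the lemma is stated with no proof beyond a ``Therefore''---and you are right that the whole content lies in the closure identifications. However, your justification of $\ol{\Delta^\circ_a}=\Delta_a$ does not work: the chart of~\eqref{hyp:dist2}(a) makes $\Delta^\circ_a$ convex but carries no information about where $\Delta_a\setminus\Delta^\circ_a$ sits in that chart, so ``continuity of $d_X$ \dots forces this local closure to coincide with the image of $\Delta_a$'' is an assertion rather than an argument (in a general metric space the closed set $\{d_X\leq a\}$ can be strictly larger than the closure of $\{d_X<a\}$; only the inclusion $\ol{\Delta^\circ_a}\subset\Delta_a$ is automatic). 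The correct input is~\eqref{hyp:dist1}(iii): if $d_X(x,y)=a$ and $0<\epsilon<a$, then $\Delta_a=\Delta_{a-\epsilon}\conv\Delta_\epsilon$ yields $z$ with $d_X(x,z)\leq a-\epsilon$ and $d_X(z,y)\leq\epsilon$, so $(x,z)\in\Delta^\circ_a$ converges to $(x,y)$.

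The second gap is in your transfer step for (b). Writing $j\cl W\eqdot X\times X\times{]-\infty,\alpha_X[}\into M\eqdot X\times X\times\R$, one has $\cor_{\Omega^+}\simeq\eim{j}\opb{j}\cor_{\Omega^+}$, hence $\RD'_M\cor_{\Omega^+}\simeq\roim{j}\RD'_W(\opb{j}\cor_{\Omega^+})\simeq\roim{j}\opb{j}\cor_Z$: the dual transfers through $\roim{j}$, not $\eim{j}$, so it does \emph{not} transfer ``unchanged''. Since $Z$ is closed in $W$ but not in $M$, the two differ: at a point $(x,y,\alpha_X)$ with $d_X(x,y)<\alpha_X$ the stalk of $\roim{j}\opb{j}\cor_Z$ is $\cor$, whereas this point does not lie in $Z$. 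So your argument (correctly) establishes the duality over $W$, but the passage to $\RD'_{X\times X\times\R}$ as claimed would fail; this traces back to an imprecision in the lemma's own statement, and indeed the paper only ever invokes the result over an open interval in the $t$-variable (e.g.\ in Lemma~\ref{le:2ker1}, where the ambient space is $J$). You should either prove the statement with $\RD'_{X\times X\times{]-\infty,\alpha_X[}}$ or say explicitly that this is how the ambient space must be read.
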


The next hypothesis will be used in order to apply Theorem~\ref{th:assocnp} and we shall give in Lemma~\ref{lem:add5}  below a natural criterion  in order that it is satisfied. 

\eq\label{hyp:dist4}
&&\left\{\parbox{65ex}{
	The good metric  space  $X$ is a $C^\infty$-manifold and, for  $0<a\leq\alpha_X$,
	$\SSi(\cor_{\Delta_a})\cap (T^*_XX\times T^*X)\subset T^*_{X\times X}X\times X$.
}\right.
\eneq

\begin{lemma}\label{lem:add5}
	Let $(X,d_X)$ be a good metric space. Assume that $X$ is a  $C^\infty$-manifold,
	the distance function $f\eqdot d_X\cl X\times X\to\R$ is of class $C^1$ on $W\eqdot \Delta^\circ_a\setminus\Delta$ for $a\leq\alpha_X$ and the partial differentials $d_xf$ and $d_yf$ do not vanish on $W$.
	Then~\eqref{hyp:dist4} is satisfied.
\end{lemma}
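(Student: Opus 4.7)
The strategy is a pointwise analysis of $\SSi(\cor_{\Delta_a})$ on $X\times X$, separating the interior/exterior of $\Delta_a$ from its topological boundary and applying the standard microsupport bound for the constant sheaf on a sublevel set with $C^1$-smooth boundary.

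First I would observe that at any point $(x_0,y_0)$ with $d_X(x_0,y_0)\neq a$, the sheaf $\cor_{\Delta_a}$ is locally constant (it vanishes where $d_X>a$ and is constantly $\cor$ where $d_X<a$), so its microsupport at such a point is contained in the zero section and the inclusion \eqref{hyp:dist4} is automatic. It thus suffices to treat a boundary point $(x_0,y_0)$ with $d_X(x_0,y_0)=a$. Since $a>0$ we have $(x_0,y_0)\notin\Delta$; choosing $a'\in(a,\alpha_X]$ and applying the hypothesis with parameter $a'$, the function $f\eqdot d_X$ is of class $C^1$ on the open neighborhood $W_{a'}=\Delta_{a'}^\circ\setminus\Delta$ of $(x_0,y_0)$, with $d_xf$ and $d_yf$ nowhere-vanishing there. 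In particular, near $(x_0,y_0)$ the set $\Delta_a$ coincides with the sublevel set $\{f\leq a\}$, whose boundary $\{f=a\}$ is a $C^1$ hypersurface.

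Next I would invoke the classical microsupport computation for $\cor_Z$ when $Z$ is a closed set with $C^1$-smooth boundary cut out by a function with nonvanishing differential (see \cite{KS90}*{Prop.~5.3.2}), which yields
\[
\SSi(\cor_{\Delta_a})\cap T^*_{(x_0,y_0)}(X\times X)\subset \R\cdot df(x_0,y_0)=\R\cdot\bigl(d_xf(x_0,y_0),d_yf(x_0,y_0)\bigr).
\]
If a covector $(0,\eta_0)\in T^*_XX\times T^*X$ lies in this line, then $(0,\eta_0)=\lambda\,df(x_0,y_0)$ for some $\lambda\in\R$, whence $\lambda\,d_xf(x_0,y_0)=0$; the hypothesis $d_xf(x_0,y_0)\neq 0$ forces $\lambda=0$ and hence $\eta_0=0$. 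Combining all three cases gives
\[
\SSi(\cor_{\Delta_a})\cap(T^*_XX\times T^*X)\subset T^*_{X\times X}(X\times X),
\]
which is exactly \eqref{hyp:dist4}.

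I expect no serious obstacle; the only mild subtlety is that the hypothesis gives $C^1$-regularity of $f$ on $W=\Delta_a^\circ\setminus\Delta$ rather than on an open neighborhood of the boundary $\{f=a\}$, but this is resolved by invoking the hypothesis at a parameter $a'>a$, which exhibits the required neighborhood. Note also that only the nonvanishing of $d_xf$ is used above; the symmetric hypothesis on $d_yf$ would similarly give the symmetric statement $\SSi(\cor_{\Delta_a})\cap(T^*X\times T^*_XX)\subset T^*_{X\times X}(X\times X)$.
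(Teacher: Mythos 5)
Your proof is correct and is essentially the paper's argument: the paper's entire proof is ``Apply \cite{KS90}*{Prop.~5.3.3}'', and you have simply spelled out the pointwise case analysis (locally constant away from $\{f=a\}$, half-line generated by $df$ on the boundary, then $d_xf\neq0$ kills covectors of the form $(0,\eta)$) that this citation encapsulates. The only cosmetic differences are that you bound the boundary microsupport by the full line $\R\cdot df$ rather than the half-line $\R_{\geq0}\cdot df$ (harmless, since it is an upper bound) and that you explicitly address the regularity-neighborhood point via a parameter $a'>a$, which the paper leaves implicit.
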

\begin{proof}
	Apply \cite{KS90}*{Prop.~5.3.3}.
\end{proof}

We shall obtain in Theorems~\ref{th:normedtspace} and~\ref{th:riemann1}  large classes of examples in which hypotheses~\eqref{hyp:dist1},~\eqref{hyp:dist2} and~\eqref{hyp:dist4} are satisfied.

\begin{lemma}\label{lem:add1}
	Let $(X,d_X)$ be a good metric space.
	\banum
	\item For every $a,b \geq 0$, $\cor_{\Delta_a} \conv \cor_{\Delta_b}  \simeq \cor_{\Delta_b} \conv \cor_{\Delta_a}$.
	\item For any $0\leq a,b$ with $a+b\leq\alpha_X$,
	\eq\label{e	q:dist1}
	&&\cor_{\Delta_a}\conv\cor_{\Delta_b}\simeq\cor_{\Delta_{a+b}},
	\eneq
	and the correspondence $a\mapsto \cor_{\Delta_a}$ defines a monoidal presheaf on $[0,\alpha_X]$ with values in the monoidal category $(\Derb(\cor_{X\times X}),\conv)$.
	\eanum
\end{lemma}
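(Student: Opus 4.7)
The main content is part (b); part (a) will then follow easily for $a+b\le\alpha_X$, and reduces in general to a swap-symmetry argument.

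For part (b), the plan is to apply Lemma~\ref{le:kAcB}(b) to the closed subsets $A=\Delta_a$, $B=\Delta_b$ of $X\times X$, assuming $a+b\le\alpha_X$. The three hypotheses of that lemma are built directly into Definition~\ref{def:goodmetricsp}: properness of $q_{13}$ on $\opb{q_{12}}\Delta_a\cap\opb{q_{23}}\Delta_b=\Delta_a\times_X\Delta_b$ follows from~\eqref{hyp:dist1}(ii), since $q_{13}$ factors through the proper projections $q_1,q_2$ on each thickening; the composition identity $\Delta_a\conv\Delta_b=\Delta_{a+b}$ is exactly~\eqref{hyp:dist1}(iii); and the fibre of $q_{13}\colon\Delta_a\times_X\Delta_b\to\Delta_{a+b}$ over $(x_1,x_3)$ is the ball intersection $B_a(x_1)\cap B_b(x_3)$, which is contractible or empty by~\eqref{hyp:dist1}(i). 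This produces the canonical isomorphism $\phi_2(a,b)\colon\cor_{\Delta_a}\conv\cor_{\Delta_b}\isoto\cor_{\Delta_{a+b}}$.

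To upgrade the family $a\mapsto\cor_{\Delta_a}$ into a monoidal presheaf on $[0,\alpha_X]$, I would take the restrictions $\rho_{a,b}\colon\cor_{\Delta_b}\to\cor_{\Delta_a}$ (for $a\le b$) to be those induced by the closed inclusion $\Delta_a\subset\Delta_b$, the unit $\phi_0$ to be the tautological identification $\cor_\Delta=\cor_{\Delta_0}$, and the composition constraint to be the $\phi_2(a,b)$ just constructed. The three axioms of Definition~\ref{def:pshK} are then naturality statements: axiom (i) follows from the functoriality of the adjunction morphism $\cor_{A\conv B}\to\cor_A\conv\cor_B$ of Lemma~\ref{le:kAcB}(a) in the pair $(A,B)$ with respect to closed inclusions; axiom (ii) is a diagram chase through the triple fibre product $\Delta_a\times_X\Delta_b\times_X\Delta_c$, whose fibres over $\Delta_{a+b+c}$ are iterated ball-intersections, still contractible or empty when $a+b+c\le\alpha_X$, combined with the associativity~\eqref{eq:assockernels} of kernel composition; axiom (iii) is immediate from $\Delta_0=\Delta$.

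Part (a) for $a+b\le\alpha_X$ is immediate from (b): both sides are canonically isomorphic to $\cor_{\Delta_{a+b}}=\cor_{\Delta_{b+a}}$. For arbitrary $a,b\ge 0$, the plan is to invoke the swap involution $v\colon X\times X\to X\times X$: since $\Delta_c$ is $v$-stable by symmetry of the distance, we have $\oim{v}\cor_{\Delta_c}\simeq\cor_{\Delta_c}$, and the Proposition preceding~\eqref{eq:oimv} immediately yields $\oim{v}(\cor_{\Delta_a}\conv\cor_{\Delta_b})\simeq\cor_{\Delta_b}\conv\cor_{\Delta_a}$. Commutativity then follows once one verifies that $\cor_{\Delta_a}\conv\cor_{\Delta_b}$ is itself $v$-invariant, which for $a+b\le\alpha_X$ is clear from (b) since $\cor_{\Delta_{a+b}}$ is $v$-invariant.

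I expect the main obstacle to be the rigorous verification of the associativity pentagon for the monoidal-presheaf structure, which requires carefully tracking the adjunction morphism of Lemma~\ref{le:kAcB}(a) through the triple fibre product and matching it with~\eqref{eq:assockernels}. The $v$-invariance of $\cor_{\Delta_a}\conv\cor_{\Delta_b}$ in the range $a+b>\alpha_X$ is also delicate, since contractibility of the ball-intersections is no longer guaranteed and the reduction to small pieces via~(b) is not directly available when $a>\alpha_X$.
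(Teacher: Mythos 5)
Your argument for part (b) and for the presheaf axioms is the paper's own: the authors likewise identify $\opb{q_{12}}\cor_{\Delta_a}\ltens\opb{q_{23}}\cor_{\Delta_b}$ with the constant sheaf on $\Delta_a\times_2\Delta_b$, observe that $q_{13}\cl\Delta_a\times_2\Delta_b\to\Delta_{a+b}$ is proper, surjective and has contractible fibres by the three clauses of~\eqref{hyp:dist1}, conclude by Lemma~\ref{le:kAcB}, and dispose of the remaining conditions of Definition~\ref{def:pshK} with ``easily checked''. For part (a) you also follow the paper, whose entire proof is the observation $\oim{v}\cor_{\Delta_a}\simeq\cor_{\Delta_a}$ combined with the proposition following~\eqref{eq:oimv}; that yields $\oim{v}(\cor_{\Delta_a}\conv\cor_{\Delta_b})\simeq\cor_{\Delta_b}\conv\cor_{\Delta_a}$ and nothing more, so the $v$-invariance point you raise is not a defect of your write-up relative to the published one --- the paper is silent on it as well. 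You can close it for all $a,b\leq\alpha_X$ without the involution: write $a=\sum_ia_i$ and $b=\sum_jb_j$ with $a_i+b_j\leq\alpha_X$ for all $i,j$, expand both kernels by (b), and transpose adjacent factors one pair at a time, each transposition being the composite $\cor_{\Delta_{a_i}}\conv\cor_{\Delta_{b_j}}\simeq\cor_{\Delta_{a_i+b_j}}\simeq\cor_{\Delta_{b_j}}\conv\cor_{\Delta_{a_i}}$ supplied by (b), and reassemble using~\eqref{eq:assockernels}. When $\max(a,b)>\alpha_X$ the sheaf $\cor_{\Delta_a}$ is no longer a composition of small pieces, so the difficulty you point out is genuine there; but in that range statement (a) is not invoked elsewhere in the paper (all later uses go through (b) or through the thickening $\stD$), so nothing downstream is at risk.
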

\begin{proof}
	(a) Recall Notations~\eqref{eq:volte}.  
	Since  $\opb{v} \cor_{\Delta_a} \simeq \cor_{\opb{v}(\Delta_a)} \simeq \cor_{\Delta_a}$, the result follows.
	
	\spa
	(b) We shall follow the notations of~\eqref{diag:123} (with $X_i=X$ for all $i$). Setting $\Delta_a\times_2\Delta_b=\opb{q_{12}}\Delta_a\cap\opb{q_{23}}\Delta_b$,  we have
	\eqn
	&&\opb{q_{12}}\cor_{\Delta_a}\ltens \opb{q_{23}}\cor_{\Delta_b}\simeq\cor_{\Delta_a\times_2\Delta_b}.
	\eneqn
	The map $q_{13}\cl \Delta_a\times_2\Delta_b\to\Delta_{a+b}$ is proper, surjective and has  contractible fibers by Hypothesis~\eqref{hyp:dist1}. Therefore, $\reim{q_{13}}\cor_{\Delta_a\times_2\Delta_b}\simeq\cor_{\Delta_{a+b}}$ by Lemma~\ref{le:kAcB}. 
	The other conditions in Definition~\ref{def:pshK} are easily checked. 
\end{proof}

We shall refine  Definition~\ref{def:thickdiag}.
\begin{definition}\label{def:metricthick}
	Let $(X,d_X,\alpha_X)$ be a good metric space.
	\banum
	\item
	A {\em metric thickening  kernel of the diagonal}  is a thickening  kernel
	whose restriction to $[0,\alpha_X]$ is isomorphic to the monoidal presheaf 
	$a\mapsto\cor_{\Delta_a}$ on $[0,\alpha_X]$.
	\item
	A metric bi-thickening kernel is a bi-thickening kernel whose restriction to $\R_{\geq0}$ is a metric thickening kernel.
	\eanum
\end{definition}
When there is no risk of confusion, (that is, almost always) we shall simply call  a metric thickening kernel,  ``a thickening''. 

Note that if the metric thickening (or bi-thickening) exists, then it is unique up to isomorphism. This last isomorphism is unique in the  sense of Remark~\ref{rem:unithick}.

\begin{theorem}\label{th:thick1}
	Let $(X,d_X,\alpha_X)$ be a good metric space. 
	There exists a metric thickening $\stD$ of the diagonal.
	Moreover, for each $a\geq0$, the two projections $q_1,q_2\cl X\times X\to X$ are proper on $\supp\stk_a$.
\end{theorem}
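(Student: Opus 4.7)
The plan is to combine Lemma~\ref{lem:add1} with the extension theorem (Theorem~\ref{th:pshK}) and then control the support by iterating the convolution.

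First, I would invoke Lemma~\ref{lem:add1}(b), which produces a monoidal presheaf $K^{\mathrm{met}}$ on $[0,\alpha_X]_\leq$ with values in $(\Derb(\cor_{X\times X}),\conv)$ defined by $a\mapsto\cor_{\Delta_a}$. Since $I\eqdot[0,\alpha_X]$ satisfies hypothesis~\eqref{hyp:I} with $\alpha=\alpha_X$, Theorem~\ref{th:pshK} tells us that the restriction functor $\spb{i_{\alpha_X}}\cl\Fun^\otimes(\R_{\geq 0}^\op,\Derb(\cor_{X\times X}))\to\Fun^\otimes(I^\op,\Derb(\cor_{X\times X}))$ is an equivalence. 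Choosing a quasi-inverse and applying it to $K^{\mathrm{met}}$ yields a monoidal presheaf $\stD$ on $(\R_{\geq 0},+)$ whose restriction to $I$ is isomorphic to $K^{\mathrm{met}}$. By Definitions~\ref{def:thickdiag} and~\ref{def:metricthick}, $\stD$ is precisely a metric thickening kernel of the diagonal.

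Next, to prove that $\supp(\stD_a)\subset\Delta_a$ for every $a\geq 0$, I would fix an integer $n$ with $a/n\leq\alpha_X$. The monoidal structure gives an isomorphism
\eqn
&&\stD_a\simeq\underbrace{\stD_{a/n}\conv\cdots\conv\stD_{a/n}}_{n\text{ times}}\simeq\underbrace{\cor_{\Delta_{a/n}}\conv\cdots\conv\cor_{\Delta_{a/n}}}_{n\text{ times}},
\eneqn
the second isomorphism coming from $\spb{i_{\alpha_X}}\stD\simeq K^{\mathrm{met}}$. The standard support estimate for a convolution of constant sheaves, combined with the triangle inequality $\Delta_b\conv\Delta_c\subset\Delta_{b+c}$ (valid for all $b,c\geq 0$, regardless of the good-metric bound), yields
\eqn
&&\supp(\stD_a)\subset\underbrace{\Delta_{a/n}\conv\cdots\conv\Delta_{a/n}}_{n\text{ times}}\subset\Delta_a.
\eneqn

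Finally, I would verify that $q_1$ and $q_2$ are proper on $\Delta_a$ for every $a\geq 0$. Properness on $\Delta_a$ for $q_1$ is equivalent to the statement that the closed $a$-neighborhood of any compact subset of $X$ is relatively compact. Hypothesis~\eqref{hyp:dist1}(ii) grants this for $a\leq\alpha_X$, and an easy induction on $\lceil a/\alpha_X\rceil$, together with the local compactness of $X$, propagates it to arbitrary $a$: if $K\subset X$ is compact, then $\ol{q_2(q_1^{-1}(K)\cap\Delta_{\alpha_X})}$ is compact by the base case, and $q_1^{-1}(K)\cap\Delta_{a+\alpha_X}$ is a closed subset of $q_1^{-1}(K')\cap\Delta_a$ for $K'$ the closed $\alpha_X$-neighborhood of $K$. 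Combined with the support bound of the previous step, this shows that $q_1$ and $q_2$ are proper on $\supp(\stD_a)$ for all $a$. The only mildly delicate point is the extension of properness from $[0,\alpha_X]$ to $\R_{\geq 0}$; otherwise the argument is a direct application of Theorem~\ref{th:pshK} combined with the triangle inequality.
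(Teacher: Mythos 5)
Your construction of $\stD$ via Lemma~\ref{lem:add1} and Theorem~\ref{th:pshK}, and your support bound $\supp(\stD_a)\subset\Delta_{a/n}\conv\cdots\conv\Delta_{a/n}\subset\Delta_a$, match the paper's (very terse) argument. The last step, however, contains a genuine error. You claim that $q_1,q_2$ are proper on $\Delta_a$ for \emph{every} $a\geq0$; this is false for a general good metric space, and your induction does not establish it. First, the stated containment $\opb{q_1}(K)\cap\Delta_{a+\alpha_X}\subset\opb{q_1}(K')\cap\Delta_a$ is simply wrong: a pair $(x_1,x_2)$ with $x_1\in K$ and $a<d_X(x_1,x_2)\leq a+\alpha_X$ lies in the left-hand side but not in $\Delta_a$. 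What you would need is that the $(a+\alpha_X)$-neighborhood of $K$ is contained in the $\alpha_X$-neighborhood of the $a$-neighborhood of $K$, i.e.\ $\Delta_{a+\alpha_X}\subset\Delta_{\alpha_X}\conv\Delta_a$; but Hypothesis~\eqref{hyp:dist1}(iii) only gives this for $a+\alpha_X\leq\alpha_X$, and it fails beyond that range: take $X=\N$ with $d_X(m,n)=10$ for $m\neq n$ and $\alpha_X=1$. This is a good metric space (all balls of radius $\leq 1$ are singletons, $\Delta_a=\Delta$ for $a\leq 1$), yet $\Delta_{10}=X\times X$ and $\opb{q_1}(m)\cap\Delta_{10}=\{m\}\times\N$ is not compact. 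So the intermediate statement you are inducting on is not merely unproven — it is false.

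The fix is already contained in your support computation, and it is what the paper means by ``the general case follows from the construction of the kernel'': you must prove properness on the iterated composition $D\eqdot\Delta_{a/n}\conv\cdots\conv\Delta_{a/n}$ rather than on $\Delta_a$. For $K\subset X$ compact, $\opb{q_1}(K)\cap D$ is the image under $(x_1,\dots,x_{n+1})\mapsto(x_1,x_{n+1})$ of the set $\{(x_1,\dots,x_{n+1})\,;\,x_1\in K,\ d_X(x_i,x_{i+1})\leq a/n\}$, which is closed and contained in $K\times B_{a/n}(K)\times\cdots$, hence compact by iterating Hypothesis~\eqref{hyp:dist1}(ii) with radius $a/n\leq\alpha_X$. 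Therefore $\opb{q_1}(K)\cap D$ is compact, and $\opb{q_1}(K)\cap\supp(\stD_a)$, being a closed subset of $X\times X$ contained in it, is compact as well. (In the example above this gives $\supp(\stD_{10})\subset\Delta^{\conv 10}=\Delta$, on which $q_1$ is of course proper, even though it is not proper on $\Delta_{10}$.)
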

\begin{proof}
	The first part of the statement follows from  Lemma~\ref{lem:add1} and Theorem~\ref{th:pshK}. The properness of $q_1$ and $q_2$ on $\supp\stk_a$ for $0\leq a\leq\alpha$ follows from Hypothesis~\eqref{hyp:dist1}. The general case follows from the construction of the kernel.
\end{proof}
\begin{corollary}
	In the preceding situation,
	let $Y$ be a good topological space and let $L\in\Derb(\cor_{X\times Y})$. Then 
	\eqn
	&&\stD_a\conv L\isoto \stD_a\npconv L \mbox{ for }a\geq0.
	\eneqn
\end{corollary}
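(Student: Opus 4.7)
The plan is to reduce the statement to the fact, already recorded in Theorem~\ref{th:thick1}, that the first projection $q_1\cl X\times X\to X$ is proper on $\supp\stD_a$. Once properness is in hand, the canonical morphism $\reim{q_{13}}\to\roim{q_{13}}$ will be an isomorphism on the relevant tensor product, which is exactly what distinguishes $\cconv$ from $\npconv$.

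First I would unwind the definitions. With $X_1=X$, $X_2=X$, $X_3=Y$ in diagram~\eqref{diag:123}, the projection $q_{13}\cl X\times X\times Y\to X\times Y$ sends $(x_1,x_2,y)$ to $(x_1,y)$, and
\eqn
&&\stD_a\conv L=\reim{q_{13}}(\opb{q_{12}}\stD_a\ltens\opb{q_{23}}L),\\
&&\stD_a\npconv L=\roim{q_{13}}(\opb{q_{12}}\stD_a\ltens\opb{q_{23}}L).
\eneqn
Setting $F\eqdot\opb{q_{12}}\stD_a\ltens\opb{q_{23}}L$, the canonical morphism $\reim{q_{13}}F\to\roim{q_{13}}F$ is an isomorphism as soon as $q_{13}$ is proper on $\supp F$. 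Since $\supp F\subset\supp\stD_a\times Y$, it suffices to check that $q_{13}$ is proper on $\supp\stD_a\times Y$.

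Next I would verify this properness. For any compact $K\subset X\times Y$, which I may take of the form $K=K_1\times K_2$ with $K_1\subset X$ and $K_2\subset Y$ compact, one has
\eqn
&&\opb{q_{13}}(K_1\times K_2)\cap(\supp\stD_a\times Y)=\bl\supp\stD_a\cap(K_1\times X)\br\times K_2.
\eneqn
By Theorem~\ref{th:thick1}, $q_1$ is proper on $\supp\stD_a$, so $\supp\stD_a\cap\opb{q_1}K_1$ is compact. Hence the right-hand side is compact, which gives properness of $q_{13}$ on $\supp\stD_a\times Y$ (a general compact is exhausted by compacts of the above product form, but one can also argue directly since $q_{13}$ factors the problem into the first and third coordinates).

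I do not see any substantive obstacle: the corollary is purely a properness statement, and the only input beyond definitions is the properness clause already established in Theorem~\ref{th:thick1}. The conclusion $\stD_a\conv L\isoto\stD_a\npconv L$ then follows from the general fact that $\reim{q_{13}}\isoto\roim{q_{13}}$ on sheaves whose support is sent properly by $q_{13}$.
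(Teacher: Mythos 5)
Your proof is correct and is exactly the argument the paper intends: the corollary is stated without proof as an immediate consequence of the properness clause of Theorem~\ref{th:thick1}, since $q_{13}$ is then proper on $\supp(\opb{q_{12}}\stD_a\ltens\opb{q_{23}}L)\subset\supp\stD_a\times Y$ and hence $\reim{q_{13}}\isoto\roim{q_{13}}$ there. Your reduction to product compacts and the identification of the relevant projection as $q_1$ are both fine.
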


\subsubsection*{Non proper composition for the distance kernels}

\begin{proposition}\label{pro:add4}
	Let $(X,d_X,\alpha_X)$ be a good metric space  satisfying~\eqref{hyp:dist2} and~\eqref{hyp:dist4}. Then for $a\geq0$, 
	and for smooth real manifolds $X_i$ \lp$i=2,3$\rp\, setting $X=X_1$, we have for any 
	$L_i\in\Derb(\cor_{X_{ij}})$ with $i=1,2$, $j=i+1$,
	\eqn
	&&\stD_a\conv (L_1\npconv L_2)\simeq (\stD_a\conv L_1)\npconv L_2.
	\eneqn
\end{proposition}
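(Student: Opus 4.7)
My plan is to apply Theorem~\ref{th:assocnp} with $K_1=\stD_a$, $K_2=L_1$, $K_3=L_2$, and then use the fact that $q_1\cl X\times X\to X$ is proper on $\supp(\stD_a)$ in order to convert the non-proper compositions on the $\stD_a$-side into the proper compositions $\stD_a\conv(-)$ that appear in the statement. Concretely, once Theorem~\ref{th:assocnp} (applied with the four-manifold chain $X,X,X_2,X_3$) produces the isomorphism
$\stD_a\npconv(L_1\npconv L_2)\simeq(\stD_a\npconv L_1)\npconv L_2$,
properness of $q_1$ on $\supp(\stD_a)$ (Theorem~\ref{th:thick1}) identifies $\stD_a\npconv(-)$ with $\stD_a\conv(-)$ on both sides, delivering the claimed isomorphism.

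The real content is therefore to verify the three hypotheses of Theorem~\ref{th:assocnp} on $K_1=\stD_a$ for every $a\geq0$: properness on the support, cohomological constructibility, and the micro-support inclusion $\SSi(\stD_a)\cap(T^*_XX\times T^*X)\subset T^*_{X\times X}X\times X$. Properness is Theorem~\ref{th:thick1}. In the regime $0\leq a\leq\alpha_X$ one has $\stD_a\simeq\cor_{\Delta_a}$, so cohomological constructibility is Lemma~\ref{lem:add2}(a) and the micro-support condition is exactly Hypothesis~\eqref{hyp:dist4}.

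The main obstacle is extending the last two properties to $a>\alpha_X$. I would proceed by induction using the explicit construction in the proof of Theorem~\ref{th:pshK}, which writes $\stD_a$ as an iterated $\conv$-composition of finitely many factors of the form $\cor_{\Delta_b}$ with $b\leq\alpha_X/2$; it then suffices to show that both properties are stable under $\conv$ of two kernels that satisfy them. For the micro-support inclusion, this is a direct application of Proposition~\ref{pro:GuS14}: a covector $(x_1,x_3;0,\xi_3)$ in $\SSi(K)\aconv\SSi(K')$ comes from some $(x_2,\xi_2)$ with $(x_1,x_2;0,\xi_2)\in\SSi(K)$ and $(x_2,x_3;-\xi_2,\xi_3)\in\SSi(K')$, so the hypothesis on $K$ forces $\xi_2=0$ and then that on $K'$ forces $\xi_3=0$; the non-characteristic condition~\eqref{eq:noncharker} needed to invoke Proposition~\ref{pro:GuS14} is immediate here, since under these hypotheses the relevant intersection reduces to the zero section. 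For cohomological constructibility, stability under $\conv$ follows from the fact that $q_{13}$ is proper on the fiber product of the supports (by property~(ii) of Hypothesis~\eqref{hyp:dist1}) together with the standard stability of cohomological constructibility under tensor product and proper direct image~\cite{KS90}*{\S~3.4}.
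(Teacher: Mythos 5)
Your base case is exactly the paper's: for $0\leq a<\alpha_X$ one has $\stD_a\simeq\cor_{\Delta_a}$, which is cohomologically constructible by Lemma~\ref{lem:add2}, has $q_1$ proper on its support, and satisfies the micro-support condition by~\eqref{hyp:dist4}, so Theorem~\ref{th:assocnp} applies (and the conversion of $\stD_a\npconv$ into $\stD_a\conv$ via properness is fine). The divergence — and the gap — is in how you pass to $a>\alpha_X$. You want to apply Theorem~\ref{th:assocnp} directly to $\stD_a$ for arbitrary $a$, which forces you to prove that cohomological constructibility and the micro-support condition propagate through the iterated compositions $\cor_{\Delta_b}\conv\cdots\conv\cor_{\Delta_b}$. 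Your micro-support argument via Proposition~\ref{pro:GuS14} is correct (the non-characteristicity in~\eqref{eq:noncharker} does reduce to the zero-section, and the covector chase does kill $\xi_2$ and then $\xi_3$). But the claim that cohomological constructibility is stable under proper direct image and tensor product, cited as ``standard'' from \cite{KS90}*{\S~3.4}, is not in that reference and is not standard: \S~3.4 only gives the duality statements for c.c.\ objects, and stability of constructibility-type conditions under the six operations is precisely what requires the $\R$-constructible machinery of Ch.~VIII (triangulations on real analytic manifolds). In the generality of a good metric space satisfying~\eqref{hyp:dist2} and~\eqref{hyp:dist4}, there is no identification of $\stD_a$ for $a>\alpha_X$ as a constant sheaf on an l.t.c.\ set, and no available argument that it remains c.c. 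As written, this step does not go through.

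The paper sidesteps the issue entirely by inducting on the \emph{statement} of the proposition rather than on properties of the kernel: assuming the isomorphism holds for $\stD_b$ \emph{for all} kernels $L_1,L_2$, one writes, for $0\leq a<\alpha_X$,
\begin{align*}
\stD_{b+a}\conv (L_1\npconv L_2)&\simeq \stD_b\conv\bigl(\stD_{a}\conv (L_1\npconv L_2)\bigr)
\simeq\stD_b\conv\bigl((\stD_{a}\conv L_1)\npconv L_2\bigr)\\
& \simeq\bigl(\stD_b\conv(\stD_{a}\conv L_1)\bigr)\npconv L_2
\simeq(\stD_{a+b}\conv L_1)\npconv L_2,
\end{align*}
where the second isomorphism is the base case and the third is the induction hypothesis applied to the new pair of kernels $(\stD_a\conv L_1,\,L_2)$. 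This only ever invokes Theorem~\ref{th:assocnp} for kernels $\cor_{\Delta_c}$ with $c<\alpha_X$, so no constructibility of $\stD_a$ for large $a$ is needed. You should replace your stability-under-composition argument by this bootstrap; the universal quantification over $L_1,L_2$ in the induction hypothesis is the essential point your proposal misses.
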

\begin{proof}
	(i) Assume first that $0\leq a<\alpha_X$. 
	In this case, $\stD_a=\cor_{\Delta_a}$ is cohomologically constructible and $q_1$ is proper on its support. Using hypothesis~\eqref{hyp:dist4}, we may 
	apply Theorem~\ref{th:assocnp}.
	
	\spa
	(ii)  Assume that the result has been proved for $\stK_b$ (for any kernels $L_1$ and $L_2$) for some $b\geq 0$ and let us prove that it is true for $\stD_{b+a}$ as soon as $0\leq a<\alpha_X$. We have
	\eqn
	\stD_{b+a}\conv (L_1\npconv L_2)&\simeq& \stD_b\conv(\stD_{a}\conv (L_1\npconv L_2))
	\simeq\stD_b\conv((\stD_{a}\conv L_1)\npconv L_2)\\
	& \simeq&(\stD_b\conv(\stD_{a}\conv L_1))\npconv L_2
	\simeq(\stD_{a+b}\conv L_1)\npconv L_2
	\eneqn
\end{proof}

\subsubsection*{Thickening and convolution}

In~\cite{KS18}, the space $X$ is the Euclidian space  $\R^n$ and  the composition $\cor_{\Delta_{a}} \conv$ is replaced by the 
convolution $\cor_{B_a} \star$ where $B_a$ is the closed ball of center $0$.  One can proceed similarly if  the good metric space $(X,d_X)$ is a topological group.
\begin{definition}
	A good metric group $(X,d_X,m,e)$, or simply $(X,d_X)$ for short, is a good metric space $(X,d_X)$ which is  a topological group for the topology induced by the distance, with multiplication $m$ and neutral element $e$, and such that the distance is bi-invariant. In other words, 
	\eqn
	&&d_X(x_1,x_2)=d_X(x_1x_3,x_2x_3)=d_X(x_3x_1,x_3x_2)\mbox{ for }x_1,x_2,x_3\in X.
	\eneqn
\end{definition}
One defines the convolution of  $F, G \in \Derb(\cor_X)$ by 
\eqn
F \star G := \reim{m}(F \etens G).
\eneqn

\begin{proposition}\label{pro:conv}
	Assume that $X$ is a good metric group. Let $B_a$ be the closed ball of radius $a$ centered at the unit $e$. There is a canonical isomorphism of functor
	\eqn
	\cor_{\Delta_a}\conv\simeq \cor_{B_a}\star.
	\eneqn
\end{proposition}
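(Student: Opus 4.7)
My approach is to identify both expressions as direct images of the same sheaf under the same map, via a change of variables that uses the group structure in an essential way. The key is to introduce the homeomorphism $\Phi \colon X \times X \to X \times X$ defined by $\Phi(b, y) = (by, y)$, whose inverse is $\Phi^{-1}(x_1, x_2) = (x_1 x_2^{-1}, x_2)$. Since $m$ and the inverse map are continuous, $\Phi$ is a topological isomorphism, so $\opb{\Phi}$ and $\reim{\Phi}$ are mutually inverse functors.

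Next I would verify the two geometric identities that make the change of variables work. First, by the right-invariance of the distance, $d_X(by, y) = d_X(b, e)$, so $(by, y) \in \Delta_a$ if and only if $b \in B_a$; equivalently, $\Phi^{-1}(\Delta_a) = B_a \times X$, which yields $\opb{\Phi} \cor_{\Delta_a} \simeq \cor_{B_a} \etens \cor_X$. Second, a direct computation yields $q_1 \circ \Phi = m$ and $q_2 \circ \Phi = p_2$, where $p_2$ denotes the second projection of $X \times X$ onto $X$; consequently $\opb{\Phi}\opb{q_2} F \simeq \opb{p_2} F$.

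Finally I would conclude by the chain of isomorphisms
\[
\cor_{\Delta_a} \conv F = \reim{q_1}(\cor_{\Delta_a} \ltens \opb{q_2} F) \simeq \reim{q_1} \reim{\Phi}\bigl(\opb{\Phi}\cor_{\Delta_a} \ltens \opb{\Phi}\opb{q_2} F\bigr) \simeq \reim{m}(\cor_{B_a} \etens F) = \cor_{B_a} \star F,
\]
in which the first isomorphism uses $\reim{\Phi}\opb{\Phi} \simeq \id$ together with the compatibility of $\opb{\Phi}$ with $\ltens$, and the second substitutes the two geometric identities above and rewrites $\reim{q_1}\reim{\Phi}$ as $\reim{q_1 \circ \Phi} = \reim{m}$. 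I do not expect any step to be a real obstacle: the only content is choosing the right $\Phi$ and observing that right-invariance of $d_X$ suffices for the identification $\Phi^{-1}(\Delta_a) = B_a \times X$. Since every step in the chain is canonical and functorial in $F$, the resulting isomorphism of functors is canonical as claimed.
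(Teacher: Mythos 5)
Your proof is correct and is essentially the paper's own argument: the paper uses the shear $v(x_1,x_2)=(x_1x_2^{-1},x_2)$, which is exactly the inverse of your $\Phi$, together with the same three identities ($\Delta_a=\opb{v}\opb{q_1}(B_a)$, $m\circ v=q_1$, and compatibility with $q_2$). The only difference is the direction in which the chain of isomorphisms is written, which is immaterial.
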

\begin{proof}
	Consider the map  $v\cl X\times X\to X\times X, (x_1,x_2) \mapsto (x_1\opb{x_2},x_2)$. One has 
	$\Delta_a=\opb{v}\opb{q_1}(B_a)$, $\opb{v}\conv\opb{q_2}\simeq\opb{q_2}$ and $m\circ v=q_1$. Therefore,  for $F\in\Derb(\cor_X)$,
	\eqn
	\cor_{B_a} \star F &=& \reim{m}(\cor_{B_a} \etens F)\\
	&\simeq& \reim{m}\reim{v}(\opb{v}\opb{q_1} \cor_{B_a} \otimes \opb{q_2} F)\simeq \cor_{\Delta_{a}} \conv F.
	\eneqn
	We have used $\reim{v}(\opb{v}\opb{q_1} \cor_{B_a} \otimes \opb{q_2} F)\simeq \opb{q_1} \cor_{B_a} \otimes \reim{v}\opb{q_2} F
	\simeq \cor_{B_a} \etens \opb{q_2} F$ which follows from $\eim{v}\circ \opb{v}\simeq\id$.
\end{proof}

\subsection{Bi-thickening of the diagonal}
In this subsection,  $(X,d_X,\alpha_X)$ is a good metric space satisfying~\eqref{hyp:dist2}. When necessary, we denote by $X_i$ ($i=1,2,\dots$) various copies of $X$.

For $a\geq0$, we define the functors $\stL_a$ and $\stR_a$ by
\eq\label{eq:LaRa}
&&\stL_a=\Phi_{\stD_a}=\stD_a\conv=\reim{q_1}\bl\stD_a\ltens\opb{q_2}(\scbul)\br,\quad \stR_a=\Psi_{\stD_a}=
\roim{q_2}\rhom\bl \stD_a,\epb{q_1}(\scbul)\br.
\eneq
Recall that the functor $\stR_a$ is right adjoint to the functor $\stL_a$ (see \cite{KS90}*{Proposition 3.6.2}). 

\begin{lemma}\label{lem:add3}
	Let $(X,d_X,\alpha_X)$ be a good metric space  satisfying~\eqref{hyp:dist2}.
	For $0< a\leq \alpha_X$, 
	$\cor_{\Delta_a}\conv(\cor_{\Delta^\circ_a}\tens\opb{q_2}\omega_X)\simeq\cor_{\Delta}$.
\end{lemma}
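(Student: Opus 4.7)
The plan is to compute the stalks of $K\eqdot\cor_{\Delta_a}\conv(\cor_{\Delta^\circ_a}\tens\opb{q_2}\omega_X)$ at each $(x_1,x_3)\in X\times X$ and show they agree with those of $\cor_\Delta$. By proper base change for $\reim{q_{13}}$ along the fiber $\iota\cl X\hookrightarrow X_{123}$, $x_2\mapsto(x_1,x_2,x_3)$, together with $\opb{\iota}\opb{q_{12}}\cor_{\Delta_a}\simeq\cor_{B_a(x_1)}$, $\opb{\iota}\opb{q_{23}}\cor_{\Delta^\circ_a}\simeq\cor_{B^\circ_a(x_3)}$, and the fact that $q_2\circ q_{23}\circ\iota$ is the identity of $X$ (so that the $\opb{q_2}\omega_X$ factor pulls back to $\omega_X$), one obtains
\[
K_{(x_1,x_3)}\simeq\rsect_c(X;\cor_{Z_a(x_1,x_3)}\tens\omega_X),\quad Z_a(x_1,x_3)=B_a(x_1)\cap B^\circ_a(x_3),
\]
in the notation of~\eqref{hyp:dist2}(c). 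It then suffices to show this stalk equals $\cor$ when $x_1=x_3$ and $0$ otherwise.

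If $x_1=x_3=x$, then $Z_a(x,x)=B^\circ_a(x)$, and Verdier duality for $a_{B^\circ_a(x)}$ gives $\rsect_c(B^\circ_a(x);\omega_{B^\circ_a(x)})\simeq\RD_\rmpt\rsect(B^\circ_a(x);\cor)\simeq\cor$, using acyclicity of the open ball (a consequence of~\eqref{hyp:dist1}(i) combined with~\eqref{hyp:dist2}(a) restricted to the slice $\{x\}\times X$). If $x_1\neq x_3$, Verdier duality together with $\RD_X(\cor_Z\tens\omega_X)\simeq\RD'_X\cor_Z$ (valid since $\rhom(\omega_X,\omega_X)\simeq\cor_X$ on a manifold) reduces the vanishing of the stalk to proving $\rsect(X;\RD'_X\cor_{Z_a(x_1,x_3)})=0$. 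Writing $\cor_{Z_a(x_1,x_3)}=\cor_{B_a(x_1)}\tens\cor_{B^\circ_a(x_3)}$, using the adjunction $\rhom(F\tens G,H)\simeq\rhom(F,\rhom(G,H))$, the duality $\RD'_X\cor_{B_a(x_1)}\simeq\cor_{B^\circ_a(x_1)}$ (the closed balls being l.t.c.\ in $X$ by slicing~\eqref{hyp:dist2}(a)), and $\rhom(\cor_U,F)\simeq\roim{j_U}(F|_U)$ for open $U$, one arrives at
\[
\RD'_X\cor_{Z_a(x_1,x_3)}\simeq\roim{j_{B^\circ_a(x_3)}}\cor_{B^\circ_a(x_1)\cap B^\circ_a(x_3)};
\]
a stalk analysis identifies this with $\cor_{Z_a(x_3,x_1)}$, and hypothesis~\eqref{hyp:dist2}(c) applied to the swapped pair $(x_3,x_1)$ delivers the required vanishing.

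To lift the pointwise calculation to a sheaf isomorphism, construct a natural morphism $K\to\cor_\Delta$: tensor the canonical $\cor_{\Delta^\circ_a}\to\cor_\Delta$ with $\opb{q_2}\omega_X$ to obtain $K_2\to\delta_*\omega_X$, whence $K=\cor_{\Delta_a}\conv K_2\to\cor_{\Delta_a}\conv\delta_*\omega_X\simeq\cor_{\Delta_a}\tens\opb{q_2}\omega_X$, and pair to the diagonal. Both $K$ and $\cor_\Delta$ are cohomologically constructible, and a morphism of such sheaves inducing an iso on every stalk is itself an iso. The principal obstacle lies in the off-diagonal identification $\roim{j_{B^\circ_a(x_3)}}\cor_{B^\circ_a(x_1)\cap B^\circ_a(x_3)}\simeq\cor_{Z_a(x_3,x_1)}$: on the tangency locus $\{y\in X:d(y,x_1)=d(y,x_3)=a\}$ where the bounding spheres meet, the local topological convexity provided by~\eqref{hyp:dist2}(a) is needed to control the pushforward stalks and complete the identification.
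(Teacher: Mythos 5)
Your stalk computation $K_{(x_1,x_3)}\simeq\rsect_c(X;\cor_{Z_a(x_1,x_3)}\tens\omega_X)$ and the treatment of the diagonal stalks agree with the paper's argument, but two steps fail as written. The serious one is the globalization. Your morphism $K\to\cor_\Delta$ is obtained by composing through $\cor_{\Delta_a}\conv\oim{\delta_X}\omega_X\simeq\cor_{\Delta_a}\tens\opb{q_2}\omega_X$, and any such composite is zero: by adjunction, $\Hom(\cor_{\Delta_a}\tens\opb{q_2}\omega_X,\cor_\Delta)\simeq\Hom(\opb{q_2}\omega_X,\rhom(\cor_{\Delta_a},\oim{\delta_X}\cor_X))\simeq\Hom(\omega_X,\cor_X)\simeq H^0(X;\omega^{\otimes-1}_X)$ (using $\rhom(\cor_{\Delta_a},\oim{\delta_X}\cor_X)\simeq\oim{\delta_X}\cor_X$ and $q_2\circ\delta_X=\id_X$), and this group vanishes because $\omega^{\otimes-1}_X$ is concentrated in degree $n=\dim X\geq1$. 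So there is no nonzero way to ``pair to the diagonal''; the factor $\opb{q_2}\omega_X$ must instead be absorbed as the relative dualizing complex of $q_{13}$. Setting $S_a=\opb{q_{12}}\Delta_a\cap\opb{q_{23}}\Delta^\circ_a$, the set $S_a\cap\opb{q_{13}}\Delta$ is open in $\opb{q_{13}}\Delta$, whence a morphism $\cor_{S_a\cap\opb{q_{13}}\Delta}\to\opb{q_{13}}\cor_\Delta$, hence $\cor_{S_a\cap\opb{q_{13}}\Delta}\tens\opb{q_2}\omega_X\to\epb{q_{13}}\cor_\Delta$, hence by the $(\reim{q_{13}},\epb{q_{13}})$ adjunction the desired morphism $K\to\cor_\Delta$ (after using the off-diagonal vanishing to replace $S_a$ by $S_a\cap\opb{q_{13}}\Delta$ under $\reim{q_{13}}$). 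This is exactly what the paper does.

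Second, the off-diagonal vanishing should not be routed through the identification $\RD'_X\cor_{Z_a(x_1,x_3)}\simeq\cor_{Z_a(x_3,x_1)}$: you leave this step unproven (your ``principal obstacle''), and it also rests on slicing the l.t.c.\ property of $\Delta^\circ_a$ to the individual balls, which~\eqref{hyp:dist2}(a) does not formally provide (duality on $X\times X$, as in Lemma~\ref{lem:add2}, does not restrict to slices). The detour is moreover unnecessary: $\ol{Z_a(x_1,x_3)}$ is a compact subset of the contractible ball $B_a(x_1)$, on a neighborhood of which $\omega_X$ is constant of the form $\cor[n]$; hence $\rsect_c(X;\cor_{Z_a(x_1,x_3)}\tens\omega_X)\simeq\rsect(X;\cor_{Z_a(x_1,x_3)})[n]$, and hypothesis~\eqref{hyp:dist2}(c) applied directly to the pair $(x_1,x_3)$ gives the vanishing, with no duality, no perfectness issue, and no swap of the two points.
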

\begin{proof}
	Set $S_a=\opb{q_{12}}\Delta_a\cap\opb{q_{23}}\Delta^\circ_a$.
	We have
	\eqn
	\cor_{\Delta_a}\conv\cor_{\Delta^\circ_a}&=&\reim{q_{13}}\bl\opb{q_{12}}\cor_{\Delta_a}\tens\opb{q_{23}}\cor_{\Delta^\circ_a}\br\simeq\reim{q_{13}}\cor_{S_a}
	\eneqn
	Let $(x_1,x_3) \in X_1 \times X_3$ and set $Z_a=\opb{q_{13}}(x_1,x_3)\cap S_a$. Then $Z_a=B_a(x_1)\cap B_a^\circ(x_3)$ and it follows from the hypothesis that $(\reim{q_{13}}\cor_{S_a})_{(x_1,x_3)}\simeq\rsect(X_2;\cor_{Z_a})\simeq0$  for  $x_1\neq x_3$.  
	Therefore, $\reim{q_{13}}\cor_{S_a}$ is supported by $\Delta\subset X_{13}$ and we get 
	\eqn
	\reim{q_{13}}(\cor_{S_a}\tens\opb{q_2}\omega_X)
	&\simeq&\reim{q_{13}}((\cor_{S_a}\tens\opb{q_{13}}\cor_\Delta)\tens\opb{q_2}\omega_X)\\
	&\simeq&\reim{q_{13}}(\cor_{S_a\cap \opb{q_{13}}\Delta}\tens\opb{q_2}\omega_X)
	\isoto\cor_\Delta.
	\eneqn
	The last isomorphism is associated with the morphism  
	$\cor_{Z_a\cap \opb{q_{13}}\Delta}\tens\opb{q_2}\omega_X\to\epb{q_{13}}\cor_\Delta$ which is deduced from the morphism  $\cor_{Z_a\cap \opb{q_{13}}\Delta}\to \opb{q_{13}}\cor_\Delta$. (Recall that $Z_a\cap\opb{q_{13}}\Delta$ is open in $\opb{q_{13}}\Delta$.)
\end{proof}

For $0\leq a\leq\alpha_X$ set $\stD_a=\cor_{\Delta_a}$ and for 
$0<a\leq\alpha_X$, set $\stD_{-a}=\cor_{\Delta^\circ_a}\tens\opb{q_2}\omega_X$. 

\begin{lemma}\label{lem:add3b}
	Let $(X,d_X,\alpha_X)$ be a good metric space  satisfying~\eqref{hyp:dist2}.
	The map $a\mapsto\stD_a$ defines a
	monoidal presheaf on $[-\alpha_X,\alpha_X]$ with values in the monoidal category 
	$(\Derb(\cor_{X\times X}),\conv)$. 
\end{lemma}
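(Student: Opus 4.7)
My strategy is to establish that $\stD_{-a}$ is a two-sided $\conv$-inverse of $\stD_a$ in $(\Derb(\cor_{X\times X}),\conv)$, and then combine this invertibility with the monoidal presheaf structure on $[0,\alpha_X]$ given by Lemma~\ref{lem:add1} to produce the required monoidal presheaf on $[-\alpha_X,\alpha_X]$.

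The first step is to prove the counterpart of Lemma~\ref{lem:add3} on the other side: $\stD_{-a}\conv\stD_a\simeq\cor_\Delta$ for $0<a\leq\alpha_X$. The argument is identical in form, working with the set $S'_a=\opb{q_{12}}\Delta^\circ_a\cap\opb{q_{23}}\Delta_a$; the fiber of $q_{13}$ over $(x_1,x_3)\notin\Delta$ is $B^\circ_a(x_1)\cap B_a(x_3)=Z_a(x_3,x_1)$, which has vanishing global sections by Hypothesis~\eqref{hyp:dist2}(c). Together with Lemma~\ref{lem:add3}, this shows that each $\stD_a$ for $0\leq a\leq\alpha_X$ is $\conv$-invertible with inverse $\stD_{-a}$.

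Next, I construct the restriction morphisms $\rho_{a,b}\colon\stD_b\to\stD_a$ for $-\alpha_X\leq a\leq b\leq\alpha_X$ case by case: for $0\leq a\leq b$, the natural surjection $\cor_{\Delta_b}\to\cor_{\Delta_a}$; for $a\leq b\leq 0$, the map induced by the open inclusion $\Delta^\circ_{-b}\subset\Delta^\circ_{-a}$ tensored with $\opb{q_2}\omega_X$; and for $a\leq 0\leq b$, the composition $\stD_b\to\cor_\Delta\to\stD_a$, where the second arrow is obtained from the surjection $\rho_{0,-a}\colon\stD_{-a}\to\cor_\Delta$ by applying $\stD_a\conv(\cdot)$ and using invertibility to identify $\stD_a\conv\stD_{-a}\simeq\cor_\Delta$. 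The composition isomorphisms $\phi_2(a,b)\colon\stD_a\conv\stD_b\isoto\stD_{a+b}$ are similarly constructed: the case $a,b\geq 0$ is Lemma~\ref{lem:add1}, and every remaining case is reduced to this one by inserting $\stD_c\conv\stD_{-c}\simeq\cor_\Delta$ and rearranging via the associativity of $\conv$. For instance, when $a,b\leq 0$, set $a'=-a$, $b'=-b$; then
\eqn
\stD_a\conv\stD_b&\simeq&\stD_{-(a'+b')}\conv\stD_{b'}\conv\stD_{a'}\conv\stD_{-a'}\conv\stD_{-b'}\\
&\simeq&\stD_{-(a'+b')}\conv\stD_{b'}\conv\cor_\Delta\conv\stD_{-b'}\simeq\stD_{-(a'+b')}=\stD_{a+b},
\eneqn
where Lemma~\ref{lem:add1} is used to identify $\stD_{b'+a'}\simeq\stD_{b'}\conv\stD_{a'}$ and the inverse relations are inserted twice.

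Finally, the compatibility conditions of Definition~\ref{def:pshK}---functoriality of $\phi_2$ under restriction, associativity, and the unit axioms---are checked by diagram chase, reducing everything via the $\conv$-invertibility of $\stD_c$ to the already-verified case of non-negative indices from Lemma~\ref{lem:add1}. The main obstacle is the bookkeeping in the mixed-sign cases: since $\stD_{-a}$ is not invariant under the swap involution $v$, one cannot invoke $v$-symmetry as in Lemma~\ref{lem:add1}(a) to commute it past other kernels; however, this is systematically circumvented by the insertion of $\stD_c\conv\stD_{-c}\simeq\cor_\Delta$ as above.
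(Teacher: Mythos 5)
Your proposal is correct and follows essentially the same route as the paper: both reduce the negative and mixed-sign composition isomorphisms to the non-negative case of Lemma~\ref{lem:add1} by inserting the inverse relation of Lemma~\ref{lem:add3} and using associativity of $\conv$. The only substantive difference is that you verify the left-inverse identity $\stD_{-a}\conv\stD_a\simeq\cor_\Delta$ directly from hypothesis~\eqref{hyp:dist2}(c), whereas the paper extracts all needed identities formally from the one-sided Lemma~\ref{lem:add3}; you are also more explicit about the restriction morphisms and compatibility checks, which the paper leaves implicit.
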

\begin{proof}
	(i) For $0<b\leq a$,
	$\cor_{\Delta_a}\conv(\cor_{\Delta^\circ_b}\tens\opb{q_2}\omega_X)\simeq\cor_{\Delta_{a-b}}$.
	This follows from Lemmas~\ref{lem:add1} and~\ref{lem:add3} and
	$\cor_{\Delta_a}\conv\cor_{\Delta^\circ_b}\simeq\cor_{\Delta_{a-b}}\conv\cor_{\Delta_b}\conv \cor_{\Delta^\circ_b}$.
	
	\spa
	(ii) For $0<a,b,a+b<\alpha_X$,
	$\cor_{\Delta^\circ_b}\conv(\cor_{\Delta^\circ_a}\tens\opb{q_2}\omega_X)\simeq\cor_{\Delta^\circ_{a+b}}$.
	This follows from (i), Lemma~\ref{lem:add1} and 
	$(\cor_{\Delta^\circ_b}\tens\opb{q_2}\omega_X)\conv(\cor_{\Delta^\circ_a}\tens\opb{q_2}\omega_X)\ltens\cor_{\Delta_{a+b}}\simeq
	\cor_{\Delta}$.
	
	\spa
	(iii) For $0<b\leq a\leq\alpha_X$,
	$\cor_{\Delta^\circ_a}\conv\cor_{\Delta_b}\simeq\cor_{\Delta^\circ_{a-b}}$.
	Indeed, apply $\cor_{\Delta_{a-b}^\circ}\ltens\opb{q_{2}}\omega_X\conv$ to both sides of (ii).
\end{proof}

Applying  Theorem~\ref{th:pshK}, we get:
\begin{proposition}\label{pro:bithick1}
	Let $(X,d_X,\alpha_X)$ be a good metric space satisfying~\eqref{hyp:dist2}. 
	Then $\stD$ extends as a metric bi-thickening kernel and, for $0<a\leq\alpha_X$, one has 
	$\stD_{-a}\simeq\cor_{\Delta_{a}^\circ}\tens\opb{q_2}\omega_X$. Moreover, $\stR_a\simeq\stD_{-a}\conv$ for $a\geq0$.
\end{proposition}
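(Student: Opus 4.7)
The statement breaks into three assertions, which I will establish in the order: (a) the bi-thickening extends, (b) its negative values are as claimed on $(-\alpha_X,0)$, and (c) $\stR_a$ is convolution with $\stD_{-a}$.

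\emph{Steps (a) and (b).} Lemma~\ref{lem:add3b} already supplies a monoidal presheaf on $[-\alpha_X,\alpha_X]_\leq$ with values in $(\Derb(\cor_{X\times X}),\conv)$, whose value at $a$ is $\cor_{\Delta_a}$ for $0\le a\le\alpha_X$ and $\cor_{\Delta^\circ_a}\otimes\opb{q_2}\omega_X$ for $0<a\le\alpha_X$ (with $a$ replaced by $-a$). Applying part (B) of Theorem~\ref{th:pshK} with $I=[-\alpha_X,\alpha_X]$, the restriction functor $\spb{j_{\alpha_X}}$ is an equivalence, so this monoidal presheaf extends uniquely to a monoidal presheaf on $(\R,+)$. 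Its restriction to $\R_{\geq0}$ is a metric thickening whose restriction to $[0,\alpha_X]$ is $a\mapsto\cor_{\Delta_a}$; by the uniqueness clause of Remark~\ref{rem:unithick} it must coincide with $\stD$ constructed in Theorem~\ref{th:thick1}. This gives (a), and the identification $\stD_{-a}\simeq\cor_{\Delta^\circ_a}\otimes\opb{q_2}\omega_X$ for $0<a\le\alpha_X$ is by construction, proving (b).

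\emph{Step (c).} For $a=0$, the claim is trivial since $\stR_0\simeq\id\simeq\cor_\Delta\conv$. For $a>0$, the monoidal structure of the bi-thickening provides isomorphisms
\[
\stD_a\conv\stD_{-a}\simeq\stD_0\simeq\cor_\Delta\simeq\stD_{-a}\conv\stD_a,
\]
compatible with the associativity constraint. Translating via the isomorphism $\Phi_{K_2}\circ\Phi_{K_1}\simeq\Phi_{K_2\conv K_1}$ of composition of kernels, these identities yield natural isomorphisms of endofunctors $\stL_a\circ(\stD_{-a}\conv)\simeq\id$ and $(\stD_{-a}\conv)\circ\stL_a\simeq\id$ on $\Derb(\cor_X)$. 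Hence $\stD_{-a}\conv$ is a quasi-inverse to $\stL_a$, and in particular is right adjoint to $\stL_a$. Since $\stR_a$ is also the right adjoint of $\stL_a$ (recalled just below~\eqref{eq:LaRa}), uniqueness of right adjoints yields the desired isomorphism $\stR_a\simeq\stD_{-a}\conv$.

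\emph{Main obstacle.} The only delicate point is that the quasi-inverse relation $\stD_a\conv\stD_{-a}\simeq\cor_\Delta$ was proved in Lemma~\ref{lem:add3} only for $0<a\le\alpha_X$; for larger $a$ one must rely on the extended monoidal structure produced by Theorem~\ref{th:pshK}. However, since that theorem is an equivalence of monoidal presheaf categories, the extended presheaf automatically satisfies $\stD_a\otimes\stD_{-a}\simeq\stD_0\simeq\cor_\Delta$ for every $a\in\R$ by the axiom $\phi_2(a,-a)$ of Definition~\ref{def:pshK}, so no further argument is needed. The rest of the proof is formal manipulation of adjunctions.
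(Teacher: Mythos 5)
Your proof is correct and follows essentially the same route as the paper: the extension and the identification of $\stD_{-a}$ come from Lemma~\ref{lem:add3b} combined with part (B) of Theorem~\ref{th:pshK}, and the identity $\stR_a\simeq\stD_{-a}\conv$ follows from the invertibility $\stD_a\conv\stD_{-a}\simeq\cor_\Delta\simeq\stD_{-a}\conv\stD_a$ together with uniqueness of right adjoints. The paper leaves this last step implicit, and your spelled-out argument (quasi-inverse of $\stL_a$, hence its right adjoint, hence isomorphic to $\stR_a$) is exactly the intended one.
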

There is indeed a better result. Set
\eq\label{not:Ipm}
&&
I=(-\alpha_X,\alpha_X). 
\eneq
\begin{theorem}\label{th:bithick3}
	Let $(X,d_X,\alpha_X)$ be a good metric space satisfying~\eqref{hyp:dist2}.
	There exists an object  $K^d\in\Derb(\cor_{X\times X\times I})$ and a distinguished triangle
	\eq\label{eq:GKSdt}
	&&\cor_{\{d_X(x,y)<-t\}}\tens\opb{q_2}\omega_X\to K^d\to  \cor_{\{d_X(x,y)\leq t\}}\xrightarrow[{\ \psi\ }]{+1}.
	\eneq
	In particular, 
	$K^d\vert_{\{t=a\}}\simeq\stK_{a}$  for $a\in I$.
\end{theorem}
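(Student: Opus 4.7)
The plan is to exhibit an object $K^d$ on $M \eqdot X \times X \times I$ fitting into the desired triangle and then to verify the fiber isomorphisms by base change. I would introduce the closed subset $Z_+ = \{(x,y,t) \in M : d_X(x,y) \leq t\}$ and the open subset $Z_- = \{(x,y,t) \in M : d_X(x,y) < -t\}$ of $M$, and set $F_+ \eqdot \cor_{Z_+}$ and $F_- \eqdot \cor_{Z_-} \tens \opb{q_2}\omega_X$ in $\Derb(\cor_M)$; these are the two outer terms of the triangle in the statement. Writing $i_a \cl X \times X \hookrightarrow M$ for the inclusion at $t=a$ and using $q_2 \circ i_a = q_2$, a direct base-change computation gives $\opb{i_a} F_+ \simeq \cor_{\Delta_a}$ for $a \geq 0$ and $0$ for $a < 0$, while $\opb{i_a} F_- \simeq \cor_{\Delta_{-a}^\circ} \tens \opb{q_2}\omega_X$ for $a < 0$ and $0$ for $a \geq 0$.

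Next I would construct $K^d$ as an object fitting into a distinguished triangle $F_- \to K^d \to F_+ \xrightarrow{\psi} F_-[1]$ for some morphism $\psi$ in $\Derb(\cor_M)$. The crucial observation is that $Z_+ \cap Z_- = \varnothing$ in $M$, since $d_X \leq t$ together with $d_X < -t$ would force $t \geq 0$ and $t < 0$ simultaneously; consequently $\opb{i_a}\psi = 0$ on every fiber, so the fiber isomorphisms demanded by the theorem hold regardless of which $\psi$ is chosen. In particular, the split choice $\psi = 0$, giving $K^d \eqdot F_- \oplus F_+$ with the tautological triangle, is already admissible. A more canonical non-trivial $\psi$, tied to the bi-monoidal structure of Lemma~\ref{lem:add3b}, could be extracted by a duality/microlocal argument near the common boundary $\Delta \times \{0\}$ of $\overline{Z_+}$ and $\overline{Z_-}$, using the cohomological constructibility provided by Lemma~\ref{lem:add2}; the natural approach would be to analyze $\rhom(F_+, F_-[1])$ via Verdier duality on $M$, but this is not required for the present statement.

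To conclude, I apply $\opb{i_a}$ to the distinguished triangle and combine with the fiber computations above: for $a > 0$ the first term vanishes and $K^d\vert_{t=a} \simeq \cor_{\Delta_a} = \stK_a$; for $a = 0$ again the first term vanishes and $K^d\vert_{t=0} \simeq \cor_\Delta = \stK_0$; for $a < 0$ the third term vanishes and $K^d\vert_{t=a} \simeq \cor_{\Delta_{-a}^\circ} \tens \opb{q_2}\omega_X$, which equals $\stK_a$ by Proposition~\ref{pro:bithick1}. The main subtlety of this theorem is not the existence per se --- essentially automatic from the disjointness of $Z_+$ and $Z_-$ --- but the production of a canonical $\psi$ encoding the monoidal data at the level of sections; for the stated fiber matching, however, any $\psi$, including zero, works.
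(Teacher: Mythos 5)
Your fiber computations are correct, and you are right that the ``in particular'' clause follows from \emph{any} distinguished triangle with those two outer terms, since at each fixed $t=a$ one of the two restrictions vanishes (for $a\geq 0$ the open set $\{d_X(x,y)<-t\}$ has empty fiber, for $a<0$ the closed set $\{d_X(x,y)\leq t\}$ does). But the step you defer as optional is the entire content of the paper's proof. The paper does not take $\psi=0$: it constructs the canonical connecting morphism by combining the duality isomorphism $\rhom(\cor_{\Delta\times\{t=0\}},\cor_{X\times X\times\R})\simeq\cor_{\Delta\times\{t=0\}}\tens\opb{q_2}\omega^{\otimes-1}_{X}[-1]$ with the isomorphism $\rhom(\cor_{\{d_X(x,y)\leq-t\}},\cor_{X\times X\times\R})\simeq \cor_{\{d_X(x,y)<-t\}}$ from Lemma~\ref{lem:add2}, and dualizing the inclusion morphism $\cor_{\{d_X(x,y)\leq-t\}}\to\cor_{\Delta\times\{t=0\}}$; the composite $\cor_{\{d_X(x,y)\leq t\}}\to \cor_{\Delta\times\{t=0\}}\to\cor_{\{d_X(x,y)<-t\}}\tens\opb{q_2}\omega_X[+1]$ is the $\psi$ named in the statement. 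This is precisely the ``duality argument near $\Delta\times\{0\}$'' you sketch and then set aside.

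Two concrete points about why setting it aside is a gap rather than a simplification. First, your support reasoning is slightly off: $Z_+\cap Z_-=\varnothing$, but $\supp(\cor_{Z_-}\tens\opb{q_2}\omega_X)=\overline{Z_-}$ meets $Z_+$ exactly along $\Delta\times\{0\}$, so $\Hom(F_+,F_-[1])$ is \emph{not} forced to vanish --- it is nonzero there, and the theorem's $\psi$ is a distinguished nonzero element of it. (The fiberwise vanishing of $\opb{i_a}\psi$ holds for the separate reason that one of the two restricted objects is zero.) Second, the split object $F_+\oplus F_-$ does not serve the purpose $K^d$ is built for: Lemma~\ref{le:2ker3}(b) asserts that the morphism entering the construction of $\psi$ is an isomorphism in $\Derb(\cor_J;p)$ at points $p=(x,x,0;\xi,-\xi,\tau)$ with $\tau>\vvert\xi\vvert_x$, and this is what yields $\dot\SSi(K^d)\subset\Lambda_h$ and hence the identification $K^h\vert_J\simeq K^d\vert_J$ in \S~3.3. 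With $\psi=0$ one would get $\SSi(K^d)=\SSi(F_+)\cup\SSi(F_-)$, which contains those extra conormal directions over $\Delta\times\{0\}$, and the comparison theorem would fail. So your argument proves the statement read literally, but the object it produces is not the $K^d$ of the paper; to actually match the paper you must carry out the duality construction of $\psi$ that you only gesture at.
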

\begin{proof}
	We shall  mimick the construction in \cite{GKS12}*{Exa.~3.10}.
	We have the isomorphism
	\eq\label{eq:GKS1}
	&&\rhom(\cor_{\Delta\times\{t=0\}},\cor_{X\times X\times\R})
	\simeq\cor_{\Delta\times\{t=0\}}\tens\opb{q_2}\omega^{\otimes-1}_{X}\,[-1].
	\eneq
	Indeed, $\cor_{\Delta\times\{t=0\}}\simeq \cor_{\Delta}\etens \cor_{\{t=0\}}$ and it follows from~\cite{KS90}*{Prop.~3.4.4} that 
	$\RD'_{X\times X\times\R}( \cor_{\Delta}\etens \cor_{\{t=0\}})\simeq \RD'_{X\times X} \cor_{\Delta}\etens \RD'_\R\cor_{\{t=0\}}$. Moreover, 
	$\RD'_{X\times X} \cor_{\Delta}\simeq \eim{\delta_X} \epb{\delta_X}\cor_{X\times X}\simeq
	\cor_{\Delta}\tens \opb{q_2}\omega_X$ and $ \RD'_\R\cor_{\{t=0\}}\simeq \cor_{\{t=0\}}\,[-1]$. 
	
	By Lemma~\ref{lem:add2}, we also have the isomorphism 
	\eq
	&&\rhom(\cor_{\{d_X(x,y)\leq-t\}},\cor_{X\times X\times\R})\simeq \cor_{\{d_X(x,y)<-t\}}\quad t\in (-a,0)\label{eq:GKS2}. 
	\eneq
	
	These isomorphisms 
	together with the morphism $\cor_{\{d_X(x,y)\leq-t\}}\to\cor_{\Delta\times\{t=0\}}$
	induce the morphism
	$\cor_{\Delta\times\{t=0\}}\tens\opb{q_2}\omega^{\otimes-1}_X[-1]\to \cor_{\{d_X(x,y)<-t\}}$.
	Hence, we obtain
	\eqn
	&&\cor_{\{d_X(x,y)\leq t\}}\to \cor_{\Delta\times\{t=0\}}\to\cor_{\{d_X(x,y)<-t\}}\tens\opb{q_2}\omega_X[+1]
	\eneqn
	Denoting by  $\psi$ the composition, we get the  distinguished triangle~\eqref{eq:GKSdt}.
\end{proof}

\begin{remark}\label{rem:tam}
	It would be possible to extend $K$ to a sheaf $\stKd\in\Derlb(\cor_{X\times X\times \R})$  by using Theorem~\ref{th:pshK}
	and using the monoidal category $(\Derb(\cor_{X\times X\times\R}),\sconv)$, where $\sconv$ is an operation adapted from~\cite{Ta08}, composition with respect  to $X$ and convolution with respect to $\R$.
\end{remark}

\subsection{Properties of the interleaving distance}

We shall extend to  metric spaces  a few results of~\cite{KS18}*{\S~2.2}. In this section, $(X,d_X)$ is a good metric space and $\stK$ is the metric thickening of the diagonal. Recall the interleaving distance $\dist_\stK$ of Definition~\ref{def:convdist}.
We set
\eq\label{def:distX}
&&\dist_X=\dist_\stK.
\eneq

\begin{lemma}\label{le:sectRaF}
	Let $F\in\Derb(\cor_X)$ and let $a\geq0$. Then
	\begin{align*}	
		\rsect(X;\stk_a\conv F)\isoto\rsect(X;F) \quad \textnormal{and} \quad \rsect_c(X;\stk_a\conv F)\isoto\rsect_c(X;F).
	\end{align*}
\end{lemma}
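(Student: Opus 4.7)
The two isomorphisms we want are induced by the canonical morphism $\stk_a \to \cor_\Delta$ coming from the restriction $\rho_{0,a}$; applying $\conv F$ and then $\rsect(X;\cdot)$ or $\rsect_c(X;\cdot)$ produces the maps in question. My plan is to reduce both statements to the single claim $\reim{q_2}\stk_a \simeq \cor_X$ (equivalently $\stk_a \conv \cor_X \simeq \cor_X$), and then verify this claim by a short induction based on the monoidal structure.

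Unwinding the definition of $\conv$, one has $\rsect(X;\stk_a \conv F) = \roim{a_X}\reim{q_1}(\stk_a \ltens \opb{q_2}F)$. By Theorem~\ref{th:thick1}, $q_1$ is proper on $\supp(\stk_a)$, so $\reim{q_1}$ agrees there with $\roim{q_1}$, yielding $\roim{a_{X_{12}}}(\stk_a \ltens \opb{q_2}F)$. Factoring $a_{X_{12}} = a_X \circ q_2$ and using that $q_2$ is likewise proper on $\supp(\stk_a)$, the projection formula gives $\roim{a_X}(\reim{q_2}\stk_a \ltens F)$. The analogous computation with $\reim{a_X}$ in place of $\roim{a_X}$ handles the compact-support case, and there the projection formula for $\reim{q_2}$ requires no properness at all. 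Granted $\reim{q_2}\stk_a \simeq \cor_X$, these expressions collapse to $\rsect(X;F)$ and $\rsect_c(X;F)$ respectively.

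It remains to prove $\stk_a \conv \cor_X \simeq \cor_X$ for every $a \geq 0$, compatibly with the canonical morphism to $\cor_\Delta \conv \cor_X = \cor_X$. For $0 \leq a \leq \alpha_X$ we have $\stk_a = \cor_{\Delta_a}$, and proper base change (legitimate since $q_2|_{\Delta_a}$ is proper) combined with the contractibility of the closed balls $B_a(x)$ from Hypothesis~\eqref{hyp:dist1} gives $(\reim{q_2}\cor_{\Delta_a})_x \simeq \rsect(B_a(x);\cor) \simeq \cor$; the induced morphism to $\cor_X$ is stalkwise the restriction to a point inside a contractible set, hence an isomorphism. For general $a$, write $a = n\alpha_X + r$ with $n \in \N$ and $0 \leq r < \alpha_X$; the monoidal isomorphism $\stk_a \simeq \stk_{\alpha_X}^{\conv n} \conv \stk_r$ together with the associativity of $\conv$ gives $\stk_a \conv \cor_X \simeq \stk_{\alpha_X}^{\conv n} \conv (\stk_r \conv \cor_X) \simeq \stk_{\alpha_X}^{\conv n} \conv \cor_X \simeq \cor_X$ by iterating the base case. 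The only delicate point is keeping track of the compatibilities so that the final identification is indeed induced by $\stk_a \to \cor_\Delta$; this is built into the induction once it is phrased in terms of the natural transformation $\stk_a \conv (-) \to (-)$ evaluated on $\cor_X$.
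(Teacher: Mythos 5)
Your proof is correct and follows essentially the same route as the paper: reduce via the monoidal structure to $0\leq a\leq\alpha_X$, use properness of the projections on $\supp\stk_a$ to interchange $\reim{}$ and $\roim{}$, apply the projection formula, and conclude from $\reim{q_2}\cor_{\Delta_a}\simeq\cor_X$, which holds because the fibers $B_a(x)$ are compact and contractible. The only slip is the parenthetical ``equivalently $\stk_a\conv\cor_X\simeq\cor_X$'', which literally computes $\reim{q_1}\stk_a$ rather than $\reim{q_2}\stk_a$; this is harmless here since the metric thickening satisfies $\oim{v}\stk_a\simeq\stk_a$ (or one can run the same induction with composition on the left by $\cor_X$).
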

\begin{proof}
	It follows from the definition of the functor $\stk_a$ that is it enough to check these isomorphisms for $0\leq a\leq\alpha_X$,  thus replacing $\stk_a$ with $\cor_{\Delta_a}$. Consider the Cartesian diagram
	\eqn
	&&\xymatrix@R=2ex@C=2ex{
		&X\times X\ar[ld]_-{q_1}\ar[rd]^-{q_2}&\\
		X\ar[rd]_-{q'_2}&&X\ar[ld]^-{q'_1}\\
		&\rmpt&
	}\eneqn
	Using the fact that $q_1$ and $q_2$ are proper on $\Delta_a$ we get the isomorphisms 
	\eqn
	\rsect(X;\cor_{\Delta_a}\conv F)&\simeq&\roim{q'_2}\reim{q_1}(\cor_{\Delta_a}\ltens\opb{q_2}F)\simeq
	\roim{q'_2}\roim{q_1}(\cor_{\Delta_a}\ltens\opb{q_2}F)\\
	&\simeq&\roim{q'_1}\roim{q_2}(\cor_{\Delta_a}\ltens\opb{q_2}F)\simeq\roim{q'_1}\reim{q_2}(\cor_{\Delta_a}\ltens\opb{q_2}F)\\
	&\simeq&\roim{q'_1}(\reim{q_2}\cor_{\Delta_a}\ltens F)\\
	&\simeq&\roim{q'_1}F\simeq\rsect(X;F).
	\eneqn
	Here we use the isomorphism $\reim{q_2}\cor_{\Delta_a}\simeq\cor_X$ which follows from the fact that the fibers of $q_2\cl\Delta_a\to X$ are compact and contractible. 
	
	A similar  proof holds for $\rsect_c(X;F)$. 
\end{proof}

\begin{proposition}\label{pro:sectF0}
	Let $F,G\in\Derb(\cor_X)$. If $\dist_X(F,G)<+\infty$,  then $\rsect(X;F)\simeq\rsect(X;G)$ and $\rsect_c(X;F)\simeq\rsect_c(X;G)$.	
\end{proposition}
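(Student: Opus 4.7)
The plan is to convert the $a$-isomorphism between $F$ and $G$ into mutually inverse morphisms on derived global sections, using Lemma~\ref{le:sectRaF}. Choose $a\geq 0$ for which $F$ and $G$ are $a$-isomorphic, and let $f\cl\stK_a\conv F\to G$ and $g\cl\stK_a\conv G\to F$ be the interleaving morphisms of Definition~\ref{def:convdist}. For every $H\in\Derb(\cor_X)$ and $b\geq 0$, Lemma~\ref{le:sectRaF} provides a natural isomorphism $\alpha^b_H\cl\rsect(X;\stK_b\conv H)\isoto\rsect(X;H)$. I would then define $\tilde f\eqdot\rsect(X;f)\circ(\alpha^a_F)^{-1}\cl\rsect(X;F)\to\rsect(X;G)$ and $\tilde g\eqdot\rsect(X;g)\circ(\alpha^a_G)^{-1}\cl\rsect(X;G)\to\rsect(X;F)$.

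To show $\tilde g\circ\tilde f=\id$, I would apply $\rsect(X;-)$ to the compatibility relation $g\circ(\stK_a\conv f)=\rho_{0,2a}\conv F$ from Definition~\ref{def:convdist}, viewed as morphisms $\stK_{2a}\conv F\to F$ (after $\stK_a\conv\stK_a\simeq\stK_{2a}$ and $\stK_0\conv F\simeq F$). Using naturality of $\alpha^a$ applied to $f$ to rewrite $\rsect(X;\stK_a\conv f)$ in terms of $\rsect(X;f)$, and the fact that $\alpha^a_F\circ\alpha^a_{\stK_a\conv F}$ agrees with $\alpha^{2a}_F$ by monoidality, one arrives at the identity $\tilde g\circ\tilde f\circ\alpha^{2a}_F=\rsect(X;\rho_{0,2a}\conv F)$. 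It then suffices to know that $\alpha^{2a}_F=\rsect(X;\rho_{0,2a}\conv F)$, from which $\tilde g\circ\tilde f=\id$ follows; the reverse composition is symmetric, giving $\rsect(X;F)\simeq\rsect(X;G)$. The same argument applied with $\rsect_c$ in place of $\rsect$ (using the second half of Lemma~\ref{le:sectRaF}) yields $\rsect_c(X;F)\simeq\rsect_c(X;G)$.

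The only non-bookkeeping step I expect is the identification $\alpha^b_H=\rsect(X;\rho_{0,b}\conv H)$ (modulo $\cor_\Delta\conv H\simeq H$), asserting that the canonical isomorphism produced in Lemma~\ref{le:sectRaF} is the one induced by the restriction $\rho_{0,b}\cl\stK_b\to\cor_\Delta$. Inspecting the proof of that lemma, $\alpha^b_H$ is built from the identification $\reim{q_2}\cor_{\Delta_b}\isoto\cor_X$, which reflects the compactness and contractibility of the fibres of $q_2\cl\Delta_b\to X$. The morphism $\reim{q_2}\rho_{0,b}$ factors through $\reim{q_2}\cor_\Delta\simeq\cor_X$, and by uniqueness of the canonical map $\reim{q_2}\cor_{\Delta_b}\to\cor_X$ it coincides with this isomorphism. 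Once this compatibility is in place for $b\in[0,\alpha_X]$, the extension to arbitrary $b\geq 0$ follows from the monoidal structure of $\stK$, and the proof is complete.
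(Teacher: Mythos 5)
Your proposal is correct and follows the same route as the paper, whose proof is simply the one-line remark that the statement ``follows immediately from the definition of the distance and Lemma~\ref{le:sectRaF}''; you have merely unwound that remark, correctly identifying the one point that needs checking, namely that the isomorphism of Lemma~\ref{le:sectRaF} is the map induced by $\rho_{0,b}$ (equivalently, that $\reim{q_2}\cor_{\Delta_b}\to\reim{q_2}\cor_{\Delta}\simeq\cor_X$ is stalkwise the restriction $\rsect(B_b(x);\cor)\to\cor$, an isomorphism since the closed balls are compact and contractible). Note also that for the conclusion one only needs $\rsect(X;\rho_{0,2a}\conv F)$ and $\rsect(X;\rho_{0,2a}\conv G)$ to be isomorphisms, so that $\tilde f$ admits a left and a right inverse; the finer coherence $\alpha^a_F\circ\alpha^a_{\stK_a\conv F}=\alpha^{2a}_F$ you invoke is not strictly necessary.
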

\begin{proof}
	This follows immediately from the definition of the distance and Lemma~\ref{le:sectRaF}.
\end{proof}

\begin{proposition}\label{pro:sectF}
	Let $F\in\Derb(\cor_X)$ and assume that $\supp(F)\subset B(x_0,a)$ with $a \leq\alpha_X$. Set $M=\rsect(X;F)$ and denote by $M_{x_0}$ the sky-scraper sheaf at $\{x_0\}$ with stalk $M$. Then $\dist_X(F,M_{x_0})\leq a$.
\end{proposition}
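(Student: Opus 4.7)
Since $a \leq \alpha_X$, we have $\stK_a \simeq \cor_{\Delta_a}$ by Definition~\ref{def:metricthick}. Moreover, by~\eqref{hyp:dist1}(ii) the projection $q_2$ is proper on $\Delta_a$, hence $B_a(x_0)$ is compact; it follows that $\supp F$ is compact and $M \simeq \rsect_c(X; F)$ is a bounded complex of $\cor$-modules. The plan is to construct morphisms $f\cl \stK_a \conv F \to M_{x_0}$ and $g\cl \stK_a \conv M_{x_0} \to F$ satisfying the compatibility conditions of Definition~\ref{def:convdist}.

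For $f$, the subset $\{x_0\} \times B_a(x_0)$ is closed in $X \times X$ and contained in $\Delta_a$, giving a restriction morphism of constant sheaves $\alpha\cl \cor_{\Delta_a} \to \cor_{\{x_0\} \times B_a(x_0)}$. Proper base change together with $\supp F \subset B_a(x_0)$ yields $\cor_{\{x_0\} \times B_a(x_0)} \conv F \simeq M_{x_0}$: the support is $\{x_0\}$ and the stalk there equals $\rsect_c(B_a(x_0); F) = \rsect_c(X; F) = M$. The morphism $f$ is then the composition of $\alpha \conv F$ with this isomorphism; on the stalk at $x_0$ it is $\id_M$. For $g$, proper base change together with $M_{x_0}$ being supported at $\{x_0\}$ gives $\stK_a \conv M_{x_0} \simeq \cor_{B_a(x_0)} \tens M$, the constant sheaf with stalk $M$ on $B_a(x_0)$ extended by zero. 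Composing with the natural evaluation $\cor_{B_a(x_0)} \tens M \to F$---arising from the counit $\opb{a_X} \roim{a_X} F \to F$ tensored with $\cor_{B_a(x_0)}$, using $\cor_{B_a(x_0)} \tens F \simeq F$---defines $g$.

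The main obstacle is the verification of the two compatibility conditions. For the composition $\stK_{2a} \conv M_{x_0} \xrightarrow{\stK_a \conv g} \stK_a \conv F \xrightarrow{f} M_{x_0}$, since $M_{x_0}$ is a skyscraper at $\{x_0\}$, the morphism is determined by its stalk at $x_0$: using contractibility of $B_a(x_0)$ and the triangle identity for the adjunction $\opb{a_{B_a(x_0)}} \dashv \roim{a_{B_a(x_0)}}$ (into which the relevant maps localize), this composition induces $\id_M$ on the stalk, and the same holds for $\rho_{0,2a} \conv M_{x_0}$ by the analogous stalk computation. For the other composition $\stK_{2a} \conv F \xrightarrow{\stK_a \conv f} \stK_a \conv M_{x_0} \xrightarrow{g} F$, I trace back to the level of kernels: using Lemma~\ref{le:kAcB} (applied with contractible fibers $\{x_0\}$), $\stK_a \conv f$ is induced by the morphism $\cor_{\Delta_a} \conv \cor_{\Delta_a} \to \cor_{\Delta_a} \conv \cor_{\{x_0\} \times B_a(x_0)} \simeq \cor_{B_a(x_0) \times B_a(x_0)}$, and, after convolving with $F$, $g$ corresponds to the restriction morphism of kernels $\cor_{B_a(x_0) \times B_a(x_0)} \to \cor_{\Delta_{B_a(x_0)}}$ (using $\cor_{\Delta_{B_a(x_0)}} \conv F \simeq F$). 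The square of kernels
\[
\xymatrix@C=1cm{
\cor_{\Delta_a} \conv \cor_{\Delta_a} \ar[r]^-{\rho_{0,a} \conv \rho_{0,a}} \ar[d] & \cor_\Delta \ar[d] \\
\cor_{B_a(x_0) \times B_a(x_0)} \ar[r] & \cor_{\Delta_{B_a(x_0)}}
}
\]
commutes by functoriality of restriction-to-closed-subset morphisms; convolving with $F$ and using that the right column becomes $\id_F$ yields the equality $g \circ (\stK_a \conv f) = \rho_{0,2a} \conv F$, completing the proof.
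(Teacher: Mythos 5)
Your proof is correct and follows essentially the same route as the paper: the morphism $g$ is obtained from the counit $\opb{a_X}\roim{a_X}F\to F$ giving $M_{B_a(x_0)}\to F$, and $f$ from the stalk identification $(\cor_{\Delta_a}\conv F)_{x_0}\simeq\rsect(B_a(x_0);F)\simeq M$ (which you realize via the kernel $\cor_{\{x_0\}\times B_a(x_0)}$). The only difference is that you carry out the verification of the compatibility conditions of Definition~\ref{def:convdist}, which the paper leaves as ``easily checked''; your stalk argument for the first composition and the kernel-level square for the second are both sound.
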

We shall mimick the proof of~\cite{KS18}*{Exa.~2.4}.
\begin{proof}
	We have
	\eqn
	&&\cor_{\Delta_a}\conv M_{x_0}\simeq M_{B(x_0,a)},
	\eneqn
	the constant sheaf on $B(x_0,a)$ with stalk $M$ extended by $0$ outside of $B(x_0,a)$.
	
	Denote by $a_X\cl X\to\rmpt$ the unique map from $X$ to $\rmpt$. The morphism $\opb{a_X}\roim{a_X}F\to F$ defines the map
	$M_X\to F$ and $F$ being supported in $B(x_0,a)$, we get the morphism 
	$g\cl \cor_{\Delta_a}\conv M_{x_0}\simeq  M_{B(x_0,a)}\to F$. 
	
	On the other hand, we have 
	\eq\label{eq:sectL}
	&&\ba{l}
	(\cor_{\Delta_a}\conv F)_{x_0}\simeq\rsect(\opb{q_1}(x_0);\cor_{\Delta_a}\ltens\opb{q_2}F)\\
	\hspace{2.1cm}\simeq \rsect(\{x_0\}\times X;\{x_0\}\times\cor_{B(x_0,a)}\ltens \opb{q_2}F)\\
	\hspace{2.1cm}\simeq\rsect(B(x_0,a);F)\simeq M
	\ea\eneq
	which defines  $f\cl \cor_{\Delta_a}\conv F\to M_{x_0}$.
	One easily checks that $f$ and $g$ satisfy the compatibility conditions in Definition~\ref{def:convdist}. Therefore
	$\dist_X(F,M_{x_0})\leq a$.
\end{proof}
In particular, a non-zero object can be $a$-isomorphic (see Definition~\ref{def:convdist}) to
the zero object. 

\begin{corollary}\label{cor:distsuppcpt}
	Let $F,G\in\Derb(\cor_X)$ and assume that there exists a ball $B_{x_0}(a)$ with $a \leq\alpha_X$ which contains the supports of $F$ and $G$. 
	Then $\dist_X(F,G)<\infty$ if and only if $\rsect(X;F)\simeq\rsect(X;G)$.
\end{corollary}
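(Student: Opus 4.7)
The plan is straightforward: this is a two-line corollary combining the two propositions just proved with the triangle inequality.

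For the forward direction ($\dist_X(F,G)<\infty \Rightarrow \rsect(X;F)\simeq\rsect(X;G)$), I would just invoke Proposition~\ref{pro:sectF0} directly; no hypothesis on the support is needed here.

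For the reverse direction, suppose $\rsect(X;F)\simeq\rsect(X;G)$, and call this common value $M$. Since both $F$ and $G$ are supported in $B_{x_0}(a)$ with $a\leq\alpha_X$, Proposition~\ref{pro:sectF} applies to each of them, giving
\[
\dist_X(F,M_{x_0})\leq a \quad\text{and}\quad \dist_X(G,M_{x_0})\leq a,
\]
where $M_{x_0}$ denotes the sky-scraper sheaf at $x_0$ with stalk $M$ (the same $M$ for both, by hypothesis). Applying the triangle inequality from Proposition~\ref{pro:convdist}(iii), we conclude
\[
\dist_X(F,G)\leq \dist_X(F,M_{x_0})+\dist_X(M_{x_0},G)\leq 2a <\infty.
\]

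There is no real obstacle; the only mild point to check is that Proposition~\ref{pro:sectF} genuinely produces the \emph{same} sky-scraper $M_{x_0}$ for both $F$ and $G$, which requires only the assumed isomorphism $\rsect(X;F)\simeq\rsect(X;G)$, and that the two bounds can be combined via the triangle inequality (which is why we work with a pseudo-distance rather than with a single interleaving morphism). The bound $2a$ obtained is not claimed to be sharp; the statement only requires finiteness.
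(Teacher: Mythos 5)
Your proof is correct and coincides with the paper's own argument: the forward direction is exactly Proposition~\ref{pro:sectF0}, and the reverse direction combines Proposition~\ref{pro:sectF} applied to $F$ and $G$ (with the common stalk $M$) via the triangle inequality of Proposition~\ref{pro:convdist}. Nothing further is needed.
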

\begin{proof}
	(i) Assume $M \eqdot \rsect(X;F)\simeq\rsect(X;G)$. Then  
	\begin{equation*}
		\dist_X(F,G)\leq \dist_X(F,M_{x_0})+\dist_X(G,M_{x_0})
	\end{equation*}
	and it remains to apply Proposition~\ref{pro:sectF}.
	
	\spa
	(ii) The converse assertion is nothing but Proposition~\ref{pro:sectF0}.
\end{proof}

\begin{corollary}\label{cor:distdt}
	Consider two distinguished triangles $F_1\to F_2\to F_3\to[+1]$ and $G_1\to G_2\to G_3\to[+1]$ in $\Derb(\cor_X)$. 
	Assume that  there exists a ball $B_{x_0}(a)$ with $a \leq\alpha_X$  which contains the supports of all sheaves $F_i,G_i$ \lp$i=1,2,3$\rp\, and also assume that $\dist_X(F_i,G_i)<\infty$ for $i=1,2$. Then
	$\dist_X(F_3,G_3)<\infty$.
\end{corollary}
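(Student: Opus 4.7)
The plan is to apply Corollary~\ref{cor:distsuppcpt}: under the support hypothesis, the conclusion $\dist_X(F_3,G_3)<\infty$ is equivalent to the existence of an isomorphism $\rsect(X;F_3)\simeq\rsect(X;G_3)$ in $\Derb(\cor)$. Concretely, once such an isomorphism is at hand, Proposition~\ref{pro:sectF} produces $a$-isomorphisms of both $F_3$ and $G_3$ with the common sky-scraper $M_{x_0}$ (with $M=\rsect(X;F_3)\simeq\rsect(X;G_3)$), and the triangle inequality of Proposition~\ref{pro:convdist} bounds $\dist_X(F_3,G_3)\leq 2a<\infty$.

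To produce this isomorphism, apply the triangulated functor $\rsect(X;\scbul)$ to each of the two given triangles, yielding
\begin{equation*}
\rsect(X;F_1)\to\rsect(X;F_2)\to\rsect(X;F_3)\to[+1],
\end{equation*}
\begin{equation*}
\rsect(X;G_1)\to\rsect(X;G_2)\to\rsect(X;G_3)\to[+1]
\end{equation*}
in $\Derb(\cor)$. The hypothesis $\dist_X(F_i,G_i)<\infty$ for $i=1,2$, combined with Proposition~\ref{pro:sectF0}, supplies isomorphisms $\rsect(X;F_i)\simeq\rsect(X;G_i)$. Granting that these are compatible with the first morphism of each triangle, the axiom TR3 completes them to a morphism of distinguished triangles whose third component, the two others being isomorphisms, is automatically the desired isomorphism $\rsect(X;F_3)\isoto\rsect(X;G_3)$; this closes the argument.

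The main obstacle is therefore the compatibility step above. My plan is to unwind Definition~\ref{def:convdist}: from $a$-isomorphisms $f_i\colon\stK_a\conv F_i\to G_i$ and $g_i\colon\stK_a\conv G_i\to F_i$ satisfying the two composition conditions, applying $\rsect(X;\scbul)$ and identifying $\rsect(X;\stK_a\conv F_i)\isoto\rsect(X;F_i)$ through Lemma~\ref{le:sectRaF} promotes $\rsect(X;f_i)$ and $\rsect(X;g_i)$ to mutually inverse isomorphisms between $\rsect(X;F_i)$ and $\rsect(X;G_i)$. The required commutativity of the square built from these identifications together with the morphisms $\rsect(X;F_1)\to\rsect(X;F_2)$ and $\rsect(X;G_1)\to\rsect(X;G_2)$ induced from the two triangles should then follow from the functoriality of $\stK_a\conv(\scbul)$ applied to the first arrow $F_1\to F_2$ and the naturality of the isomorphism in Lemma~\ref{le:sectRaF}; verifying that this square does commute in $\Derb(\cor)$ is the delicate point to settle in order to complete the argument.
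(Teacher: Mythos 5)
Your strategy coincides with the paper's: reduce, via Corollary~\ref{cor:distsuppcpt}, to producing an isomorphism $\rsect(X;F_3)\simeq\rsect(X;G_3)$, and obtain it by applying the triangulated functor $\rsect(X;\scbul)$ to both triangles and completing the isomorphisms on the first two vertices to a morphism of triangles. The paper disposes of this last step in one line (``since the functor $\rsect(X;\scbul)$ is triangulated, this isomorphism still holds for $i=3$''), whereas you correctly isolate the real difficulty: TR3 applies only if the square formed by $\rsect(X;F_1)\to\rsect(X;F_2)$, $\rsect(X;G_1)\to\rsect(X;G_2)$ and the two vertical isomorphisms coming from the interleavings actually commutes.

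That commutativity is a genuine gap, and it cannot be closed by the functoriality of $\stK_a\conv(\scbul)$ or the naturality of Lemma~\ref{le:sectRaF}: the $a$-isomorphisms $(f_1,g_1)$ for the pair $(F_1,G_1)$ and $(f_2,g_2)$ for $(F_2,G_2)$ are chosen entirely independently of the morphisms $u\cl F_1\to F_2$ and $v\cl G_1\to G_2$ of the triangles, and nothing in Definition~\ref{def:convdist} relates $f_2\circ(\stK_a\conv u)$ to $v\circ f_1$. Nor is the compatibility a removable technicality, since a cone is not determined by its source and target: over a field $\cor$, with $X=\R$ and $x_0=0$, take $F_1=F_2=G_1=G_2=\cor_{\{0\}}$, let $F_1\to F_2$ be the identity (so $F_3\simeq 0$) and $G_1\to G_2$ the zero map (so $G_3\simeq \cor_{\{0\}}\oplus\cor_{\{0\}}[1]$). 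All hypotheses hold with $\dist_X(F_i,G_i)=0$ for $i=1,2$, yet $\rsect(X;F_3)=0\not\simeq\rsect(X;G_3)$, hence $\dist_X(F_3,G_3)=+\infty$ by Proposition~\ref{pro:sectF0}. So neither your argument nor the paper's can be completed as written; the statement requires an additional hypothesis, e.g.\ that the two triangles are linked by a morphism of triangles, or that the interleavings commute with $u$ and $v$ --- in which case your TR3 argument goes through verbatim.
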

\begin{proof}
	It follows from  Corollary~\ref{cor:distsuppcpt} that $\rsect(X;F_i)\simeq\rsect(X;G_i)$ for $i=1,2$. Since the functor 
	$\rsect(X;\scbul)$ is triangulated, this isomorphism still holds for $i=3$. Then the result follows again from 
	Corollary~\ref{cor:distsuppcpt}.
\end{proof}

\subsubsection*{Locally constant sheaves}

Recall that an object $L\in\Derb(\cor_X)$ is locally constant (resp.\ constant)  if, for all $j\in\Z$, $H^j(L)$ is a locally constant (resp.\ constant) sheaf.

\begin{lemma}\label{lem:locsyst}
	Let $L \in\Derb(\cor_X)$ and assume that $L$ is locally constant.  Let $a\geq0$. Then $\stk_a\conv L\isoto L$. 
\end{lemma}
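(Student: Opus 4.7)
The plan is to reduce to the range $0 \leq a \leq \alpha_X$ via the monoidal structure of $\stk$, and then to compute $\cor_{\Delta_a} \conv L$ by a direct stalk computation.

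\textbf{Reduction.} By the construction of $\stk$ in the proof of Theorem~\ref{th:pshK} (applied as in Theorem~\ref{th:thick1}), for any $a \geq 0$ there exist $b_1, \ldots, b_n \in [0, \alpha_X/2]$ with $\stk_a \simeq \stk_{b_1} \conv \cdots \conv \stk_{b_n}$. By associativity of $\conv$, if $\stk_b \conv L \simeq L$ whenever $0 \leq b \leq \alpha_X$, then iteration yields $\stk_a \conv L \simeq L$ for arbitrary $a \geq 0$. Hence it suffices to treat $0 \leq a \leq \alpha_X$, in which case $\stk_a \simeq \cor_{\Delta_a}$.

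\textbf{Stalk computation.} For $0 \leq a \leq \alpha_X$, Theorem~\ref{th:thick1} ensures that $q_1$ is proper on $\supp(\cor_{\Delta_a}) = \Delta_a$, and the fiber of $q_1$ at $x_1 \in X$ meets $\Delta_a$ in $\{x_1\} \times B_a(x_1)$. Proper base change yields
\eqn
&&(\cor_{\Delta_a} \conv L)_{x_1} \simeq \rsect(\{x_1\} \times B_a(x_1);\opb{q_2} L) \simeq \rsect(B_a(x_1); L|_{B_a(x_1)}),
\eneqn
and the morphism $\cor_{\Delta_a} \conv L \to \cor_\Delta \conv L \simeq L$ induced by the restriction $\rho_{0,a}\cl \cor_{\Delta_a} \to \cor_\Delta$ specializes at $x_1$ to the evaluation $\rsect(B_a(x_1); L|_{B_a(x_1)}) \to L_{x_1}$. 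Since $B_a(x_1)$ is contractible by the good metric space hypothesis~\eqref{hyp:dist1} and the cohomology sheaves $H^q L$ are locally constant, each $H^q L|_{B_a(x_1)}$ is a constant sheaf, and the hypercohomology spectral sequence $E_2^{p,q} = H^p(B_a(x_1); (H^qL)|_{B_a(x_1)}) \Rightarrow H^{p+q}(B_a(x_1); L|_{B_a(x_1)})$ collapses to degree $p=0$, showing the evaluation map is an isomorphism.

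\textbf{Main obstacle.} The only delicate step is this last derived-category statement: that a bounded complex whose cohomology sheaves are locally constant has global sections on the contractible ball $B_a(x_1)$ computing its stalk at $x_1$. This does not follow formally from the single-sheaf case and requires invoking either the spectral sequence argument above together with the homotopy invariance of sheaf cohomology with constant coefficients on the good topological space $B_a(x_1)$, or an induction on the length of a Postnikov truncation of $L$; the finite flabby dimension assumption built into ``good topological space'' keeps everything bounded.
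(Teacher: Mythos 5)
Your proof is correct and follows essentially the same route as the paper's: reduce to $0\leq a\leq\alpha_X$ via the monoidal decomposition of $\stk_a$, compute the stalk $(\cor_{\Delta_a}\conv L)_{x}\simeq\rsect(B_a(x);L)$ exactly as in~\eqref{eq:sectL}, and conclude by contractibility of $B_a(x)$. The only difference is that you spell out the last step (that a complex with locally constant cohomology on the contractible ball has global sections computing the stalk, via the hypercohomology spectral sequence and homotopy invariance), which the paper compresses into ``we may assume $L$ is constant near $B_a(x)$''.
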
	
\begin{proof}
	We may choose $a$ such that $a<\alpha_X$ and replace $\stk_a$ with $\cor_{\Delta_a}$. 
	It is then enough to prove that, for $x\in X$, the natural morphism $(\cor_{\Delta_a}\conv L )_{x}\to L_x $ is an isomorphism.
	We may also assume that $ L $ is a constant sheaf in a neighborhood of $B_a(x)$. Then by~\eqref{eq:sectL}, we get
	\eqn
	&&  (\cor_{\Delta_a}\conv  L )_{x}\simeq \rsect(B_a(x); L )\simeq  L_x.
	\eneqn
\end{proof}

\begin{proposition}\label{pro:locsyst}
	Let $F, G\in\Derb(\cor_X)$. Assume that  $F$ is locally constant and  that  $\dist_X(F,G)$ is finite.  Then $F$ is a direct summand of $G$.
	In particular, if both $F$ and $G$ are locally constant,  then  $F\simeq G$.
\end{proposition}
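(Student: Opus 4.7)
The plan is to extract from the interleaving data two morphisms $F\to G$ and $G\to F$, and to use the isomorphism $\stk_a\conv F\simeq F$ supplied by Lemma~\ref{lem:locsyst} to show that the first admits the second as a left inverse. Since $\dist_X(F,G)<\infty$, I fix $a\geq 0$ for which $F$ and $G$ are $a$-isomorphic, giving morphisms $f\cl\stk_a\conv F\to G$ and $g\cl\stk_a\conv G\to F$ with the compatibility conditions of Definition~\ref{def:convdist}. Write $\pi_F^{(b)}\cl \stk_b\conv F\to F$ and $\pi_G^{(b)}\cl \stk_b\conv G\to G$ for the morphisms induced by the restriction $\rho_{0,b}\cl\stk_b\to\stk_0=\cor_\Delta$. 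By Lemma~\ref{lem:locsyst}, $\pi_F^{(b)}$ is an isomorphism for every $b\geq 0$. I set $\bar f\eqdot f\circ(\pi_F^{(a)})^{-1}\cl F\to G$.

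Exploiting the first compatibility $g\circ(\stk_a\conv f)=\pi_F^{(2a)}$ together with the invertibility of $\pi_F^{(2a)}$, I define $\alpha\eqdot(\stk_a\conv f)\circ(\pi_F^{(2a)})^{-1}\cl F\to\stk_a\conv G$. Then $g\circ\alpha=\id_F$, so $F$ is immediately a direct summand of $\stk_a\conv G$. A short diagram chase, using the decomposition $\rho_{0,2a}=\rho_{0,a}\circ\rho_{a,2a}$ and the bifunctoriality of $\conv$ (so that $\stk_a\conv\pi_F=\rho_{a,2a}\conv F$ and $\pi_F^{(2a)}=\pi_F^{(a)}\circ(\stk_a\conv\pi_F^{(a)})$), yields $\pi_G^{(a)}\circ\alpha=\bar f$.

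The main obstacle is to \emph{descend} this retraction from $\stk_a\conv G$ to $G$ itself: concretely, one needs a morphism $\bar g\cl G\to F$ with $g=\bar g\circ\pi_G^{(a)}$, for then $\bar g\circ\bar f=\bar g\circ\pi_G^{(a)}\circ\alpha=g\circ\alpha=\id_F$. Such a factorization is not automatic -- by adjunction it amounts to a natural isomorphism $\Hom(G,F)\isoto\Hom(\stk_a\conv G,F)=\Hom(G,\stR_aF)$, and $\stR_aF$ need not agree with $F$ even when $F$ is locally constant. I therefore expect the argument to use both compatibility conditions together: the second one, $f\circ(\stk_a\conv g)=\pi_G^{(2a)}$, transported through the iso $\stk_a\conv F\simeq F$, reads $\bar f\circ\beta=\pi_G^{(2a)}$ with $\beta\eqdot\pi_F^{(a)}\circ(\stk_a\conv g)$, forcing $\bar f$ to be a ``right factor'' of the canonical map $\pi_G^{(2a)}$. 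Combining this with $g\circ\alpha=\id_F$ and the naturality of the presheaf $\stk$ should yield the required factorization of $g$ through $\pi_G^{(a)}$, which is the technical heart of the proof.

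For the ``in particular'' assertion, when $G$ is also locally constant Lemma~\ref{lem:locsyst} makes $\pi_G^{(a)}$ invertible, so one may simply set $\bar g\eqdot g\circ(\pi_G^{(a)})^{-1}\cl G\to F$. The two compatibilities then read directly as $\bar g\circ\bar f=\id_F$ and $\bar f\circ\bar g=\id_G$, producing an isomorphism $F\simeq G$ without needing any further argument.
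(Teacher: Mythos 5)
Your construction of $\bar f\cl F\to G$ and of $\alpha\cl F\to\stK_a\conv G$ with $g\circ\alpha=\id_F$ and $\pi_G^{(a)}\circ\alpha=\bar f$ is correct, and it is exactly the content of the paper's one-line proof: the first compatibility condition of Definition~\ref{def:convdist}, read through the isomorphism $\stK_{2a}\conv F\isoto F$ supplied by Lemma~\ref{lem:locsyst}, factors an isomorphism of $F$ through $\stK_a\conv G$. Your treatment of the ``in particular'' assertion is also complete and matches the intended argument.

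The problem is precisely the point you flag and then leave open: what you have established is that $F$ is a direct summand of $\stK_a\conv G$, not of $G$, and your closing claim that combining the two compatibilities ``should yield the required factorization'' is a hope, not a proof. Indeed, the second compatibility, transported as you indicate, only gives $\bar f\circ g\circ(\rho_{a,2a}\conv G)=\pi_G^{(a)}\circ(\rho_{a,2a}\conv G)$, and since $\rho_{a,2a}\conv G$ is not an epimorphism nothing can be cancelled; the factorization $g=\bar g\circ\pi_G^{(a)}$ you need amounts, as you note, to $\Hom(G,F)\isoto\Hom(G,\stR_aF)$, which does not follow from the interleaving data alone. So the gap in your write-up is genuine. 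For comparison, the paper's proof does not perform this descent either: it simply reads the factorization $F\to\stK_a\conv G\to F$ of $\id_F$ as ``morphisms $F\to G\to F$''. One clean way to close the gap, at least under hypothesis~\eqref{hyp:dist2} so that the metric bi-thickening of Proposition~\ref{pro:bithick1} is available, is to apply $\stK_{-a}\conv$ to the splitting $\stK_a\conv G\simeq F\oplus N$ obtained from $g\circ\alpha=\id_F$: since $\stK_{-a}\conv\stK_a\simeq\cor_{\Delta}$ and $\stK_{-a}\conv F\simeq\stK_{-a}\conv(\stK_a\conv F)\simeq F$ for $F$ locally constant, one gets $G\simeq F\oplus(\stK_{-a}\conv N)$, which is the assertion.
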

\begin{proof}
	By the hypothesis and Lemma~\ref{lem:locsyst} we have morphisms $F\to G\to F$ such that the composition is an isomorphism.
\end{proof}
It follows that the interleaving distance is not really interesting when considering locally constant sheaves.

\subsection{The stability theorem}
Let $X$ be a good topological space and let $(Y,d_Y)$ be a good metric space. 
We denote by $\stK^Y_a$ the kernel  on $Y\times Y$. It  defines an  endofunctor of
$\Derb(\cor_{X\times Y})$, $K\mapsto K\conv \stK_a$. 
We then get 
a pseudo-distance on  $\Derb(\cor_{X\times Y})$ that we call a relative distance and denote  by $\dist_{X\times Y/X}$.

\begin{theorem}[{The stability theorem}]\label{th:stabmetric}
	Let $X$ be a good topological space and let $(Y,d_Y)$ be a good metric space. 
	Let $K_1,K_2\in\Derb(\cor_{Y\times X})$ and let $F\in\Derb(\cor_X)$. Then
	\banum
	\item
	$\dist_{Y}( K_1\conv F,K_2\conv F)\leq \dist_{Y\times X/X}(K_1,K_2)$. 
	\item
	Assume moreover that $X$ and $Y$ are  $C^\infty$-manifolds  and  that $(Y,d_Y)$  satisfies~\eqref{hyp:dist2} and~\eqref{hyp:dist4}.  
	Then 
	$\dist_{Y}( K_1\npconv F,K_2\npconv F)\leq \dist_{Y\times X/X}(K_1,K_2)$. 
	\eanum
\end{theorem}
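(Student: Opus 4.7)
The plan is to unwind the definition of the relative distance and transport the interleaving data through the operation $(\scbul) \conv F$ (resp.\ $(\scbul) \npconv F$), using associativity of kernel composition. Concretely, let $a \geq 0$ be strictly larger than $\dist_{Y \times X/X}(K_1, K_2)$. By Definition~\ref{def:convdist} applied on $Y \times X$ with the thickening acting on the $Y$-factor, there exist morphisms
\[
f \colon \stK^Y_a \conv K_1 \to K_2, \qquad g \colon \stK^Y_a \conv K_2 \to K_1,
\]
such that each composition $\stK^Y_{2a} \conv K_i \to \stK^Y_a \conv K_j \to K_i$ agrees with the morphism induced by $\rho_{0,2a} \colon \stK^Y_{2a} \to \cor_{\Delta_Y}$.

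For (a), I apply $(\scbul) \conv F$ to $f$ and $g$ and invoke the associativity isomorphism~\eqref{eq:assockernels} to rewrite $(\stK^Y_a \conv K_i) \conv F \simeq \stK^Y_a \conv (K_i \conv F)$. This immediately yields morphisms witnessing an $a$-isomorphism between $K_1 \conv F$ and $K_2 \conv F$: the required compatibility conditions in Definition~\ref{def:convdist} transfer formally under associativity and the naturality of $\rho_{0,2a}$. Passing to the infimum over admissible $a$ gives the stated inequality.

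Part (b) proceeds by exactly the same scheme, the only difference being that the plain associativity~\eqref{eq:assockernels} of proper compositions no longer suffices. What is needed is the mixed identity
\[
(\stK^Y_a \conv K_i) \npconv F \simeq \stK^Y_a \conv (K_i \npconv F),
\]
with proper composition on the $Y$-factor and non-proper composition on the $X$-factor. This is precisely Proposition~\ref{pro:add4} applied with $X_3 = \rmpt$ (in its notation), and is the reason why the hypotheses~\eqref{hyp:dist2} and~\eqref{hyp:dist4} must be imposed on $(Y, d_Y)$. This mixed associativity is the only non-formal ingredient and constitutes the main---indeed the only---obstacle; once it is available, the argument of part (a) carries over verbatim.
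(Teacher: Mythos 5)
Your proposal is correct and follows essentially the same route as the paper: part (a) is exactly the associativity isomorphism $\stD^Y_a\conv (K_i\conv F)\simeq (\stD^Y_a\conv K_i)\conv F$ from~\eqref{eq:assockernels} plus a formal transfer of the interleaving data, and part (b) replaces this with the mixed associativity of Proposition~\ref{pro:add4}, which is precisely where the hypotheses~\eqref{hyp:dist2} and~\eqref{hyp:dist4} enter. Your write-up just makes explicit the unwinding of Definition~\ref{def:convdist} that the paper leaves implicit.
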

\begin{proof}
	(a) We have 
	\eqn
	&&\stD^Y_a\conv (K_i\conv F)\simeq (\stD^Y_a\conv K_i)\conv F,\quad i=1,2.
	\eneqn
	Then the result follows immediately from Definition~\ref{def:convdist}. 
	
	\spa
	(b) The proof is the same as in (a)  after replacing $\conv$ with $\npconv$ and using Proposition~\ref{pro:add4}.
\end{proof}

Let $X$ and $Y$ be as above and let $f_1,f_2\cl X\to Y$ be two continuous maps. As usual, one sets 
\eqn
&&\dist(f_1,f_2)=\sup_{x\in X}d_Y(f_1(x),f_2(x)).
\eneqn

\begin{corollary}[{(The metric stability theorem, see~\cite{KS18}*{Th.~2.7})}]\label{cor:stabmetric}
	Let $X$ be a good topological space and let $Y$ be a \lp real, finite dimensional\rp\, normed  vector space, $d_Y$ the associated distance.
	Then $\dist_{Y}(\reim{f_1}F,\reim{f_2}F)\leq \dist(f_1,f_2)$. If $X$ is a $C^\infty$-manifold and $Y$ is an Euclidian vector space, the same result holds with $\reim{f}$ replaced with $\roim{f}$. 
\end{corollary}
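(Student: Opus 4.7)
The plan is to reduce the corollary to Theorem~\ref{th:stabmetric} by encoding the pushforwards as kernels and then bounding a relative interleaving distance. For each continuous $f\cl X\to Y$, set $K_f\eqdot\cor_{\Gamma_f}\in\Derb(\cor_{Y\times X})$, where $\Gamma_f$ is the graph of $f$. Since the closed embedding $\iota\cl\Gamma_f\hookrightarrow Y\times X$ composes with the projection to $Y$ to give $f$ and with the projection to $X$ to give a homeomorphism, the identifications $\opb{q_X}F\vert_{\Gamma_f}\simeq F$ and $\iota_*=\iota_!$ give $K_f\conv F\simeq\reim{f}F$ and $K_f\npconv F\simeq\roim{f}F$. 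By Theorem~\ref{th:stabmetric}, it therefore suffices to prove that $\dist_{Y\times X/X}(K_{f_1},K_{f_2})\leq\delta$, where $\delta\eqdot\dist(f_1,f_2)$. A finite-dimensional normed space is a good metric space in which $\alpha_Y$ may be taken arbitrarily large, since balls and their pairwise intersections are convex, hence contractible; when $Y$ is moreover Euclidean, $d_Y$ is smooth off the diagonal with non-vanishing partial differentials in each variable, so Lemma~\ref{lem:add5} yields~\eqref{hyp:dist4} and~\eqref{hyp:dist2} is easily verified, justifying the use of part (b) of Theorem~\ref{th:stabmetric} for the non-proper statement.

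The heart of the argument is a tubular-neighborhood computation: for any $a\geq 0$ and any continuous $f$, setting
\[ T_{f,a}\eqdot\{(y,x)\in Y\times X\,;\,d_Y(y,f(x))\leq a\}, \]
one has $\stK^Y_a\conv K_f\simeq\cor_{T_{f,a}}$. For $0\leq a\leq\alpha_Y$ this reduces to $\cor_{\Delta^Y_a}\conv\cor_{\Gamma_f}$: the fibered intersection $\Delta^Y_a\times_{Y_2}\Gamma_f$ equals $\{(y_1,f(x),x)\,;\,d_Y(y_1,f(x))\leq a\}$ and projects homeomorphically onto $T_{f,a}$, so Lemma~\ref{le:kAcB}(b) applies. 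The general case follows by decomposing $a=n\lambda+r$ with $\lambda<\alpha_Y$, using the monoidal structure to realize $\stK^Y_a$ as an iterated composition of $\stK^Y_\lambda$'s with a final $\stK^Y_r$, and iterating the analogous formula $\stK^Y_\lambda\conv\cor_{T_{f,b}}\simeq\cor_{T_{f,b+\lambda}}$, whose fibers $B_\lambda(y_1)\cap B_b(f(x))$ are contractible by~\eqref{hyp:dist1}.

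Given this, for any $a\geq\delta$ the triangle inequality gives closed inclusions $\Gamma_{f_j}\subset T_{f_i,a}$ with $\{i,j\}=\{1,2\}$, producing canonical restriction morphisms of constant sheaves
\[ \alpha\cl\stK^Y_a\conv K_{f_1}\simeq\cor_{T_{f_1,a}}\to\cor_{\Gamma_{f_2}}=K_{f_2},\qquad \beta\cl\stK^Y_a\conv K_{f_2}\to K_{f_1}. \]
To verify the compatibilities of Definition~\ref{def:convdist}, compute $\stK^Y_{2a}\conv K_{f_1}\simeq\cor_{T_{f_1,2a}}$ by the same method, identify $\stK^Y_a\conv\alpha$ with the restriction $\cor_{T_{f_1,2a}}\to\cor_{T_{f_2,a}}$ (well defined because $T_{f_2,a}\subset T_{f_1,2a}$, again by the triangle inequality), and observe that its composition with $\beta$ is the restriction to $\Gamma_{f_1}\subset T_{f_2,a}\cap T_{f_1,2a}$; this coincides with the morphism induced by $\rho_{0,2a}$ on $K_{f_1}$, since the composite inclusion $\Gamma_{f_1}\subset T_{f_2,a}\subset T_{f_1,2a}$ equals the direct inclusion $\Gamma_{f_1}\subset T_{f_1,2a}$. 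The symmetric condition is identical.

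The main difficulty I expect is the naturality bookkeeping in this last step: one must check that the identifications supplied by Lemma~\ref{le:kAcB}(b) intertwine the restriction $\cor_{T_{f_1,a}}\to\cor_{\Gamma_{f_2}}$ with the restriction $\cor_{T_{f_1,2a}}\to\cor_{T_{f_2,a}}$ upon application of $\stK^Y_a\conv(\scbul)$. Since both morphisms are restrictions of constant sheaves to nested closed subsets and $\stK^Y_a\conv(\scbul)$ is computed as $\reim{q_{13}}$ of a fibered intersection with contractible (indeed singleton) fibers, this reduces to a routine diagram chase.
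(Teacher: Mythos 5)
Your proposal is correct and follows essentially the same route as the paper: encode $\reim{f_i}$ and $\roim{f_i}$ by the graph kernels $\cor_{\Gamma_{f_i}}$, identify $\cor_{\Delta^Y_a}\conv\cor_{\Gamma_f}$ with the constant sheaf on the tube $\Delta^Y_a\conv\Gamma_f=T_{f,a}$ via Lemma~\ref{le:kAcB}, use the triangle inequality to get the interleaving morphisms bounding $\dist_{Y\times X/X}(K_{f_1},K_{f_2})$ by $\dist(f_1,f_2)$, and conclude by Theorem~\ref{th:stabmetric}. You merely spell out more of the compatibility bookkeeping (and an iterated decomposition $a=n\lambda+r$ that is superfluous since $\alpha_Y$ may be taken arbitrarily large for a normed space), which the paper leaves implicit.
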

\begin{proof}
	Let $a=\dist(f_1,f_2)$. Of course, we may assume that $a<\infty$. 
	Denote by $\Gamma_i$  the graph of $f_i$ in $Y\times X$. Then
	\eq\label{eq:deltaconvgamma1}
	&&\Gamma_{f_i}\subset \Delta_a^Y\conv\Gamma_{f_j}, \, i,j\in\{1,2\}.
	\eneq
	Moreover, for $f=f_1$ or $f=f_2$, one has 
	\eq\label{eq:deltaconvgamma2}
	&&\cor_{\Delta_a^Y}\conv \cor_{\Gamma_f}\simeq \cor_{ \Delta_a^Y\conv\Gamma_f}.
	\eneq
	Set $K_i=\cor_{\Gamma_{f_i}}$ ($i=1,2$). 
	By~\eqref{eq:deltaconvgamma1} and~\eqref{eq:deltaconvgamma2} , we get 
	morphisms $\cor_{\Delta_a^Y}\conv K_{f_1}\to K_{f_2}$ and $\cor_{\Delta_a^Y}\conv K_{f_2}\to K_{f_1}$ satisfying the conditions of Definition~\ref{def:convdist}. 
	Therefore, 
	\eq\label{eq:distfdistK}
	&&\dist_{Y\times X/X}(K_{f_1},K_{f_2})\leq a= \dist(f_1,f_2).
	\eneq
	Since   $\reim{f_i}F\simeq K_i\conv F$ and $\roim{f_i}F\simeq K_i\npconv F$, 
	the result follows from  Theorem~\ref{th:stabmetric} since hypotheses~\eqref{hyp:dist2} and~\eqref{hyp:dist4} are satisfied
	if $Y$ is an Euclidian vector space. 
\end{proof}

\begin{remark}
	In~\cite{KS18}*{Th.~2.7} the proof  for $\roim{f}$ and  $\reim{f}$ is almost the same and  $X$ is only assumed to be a good topological space.  The reason why the non proper case is easier in the situation of~\cite{KS18} is that these authors use the convolution functor $\cor_{B_a}\star$ instead of $\cor_{\Delta_a}\conv$.
	
	More precisely, consider the diagram in which $Y$ is a real finite dimensional normed vector space,  $Y_1$ and $Y_2$ are two copies of $Y$ and    $s$ is the map $(y_1,y_2)\mapsto y_1+y_2$, $s_{13}$ is the map $(y_1,x,y_2)\mapsto (y_1+y_2,x)$:
	\eqn
	&&\xymatrix{
		&Y_{1}\times X\times Y_2\ar[ld]_-{p_{12}}\ar[d]^-{s_{13}}\ar[rd]^-{p_{23}}&\\
		Y_{12}\ar[rd]_-s&Y\times X\ar[d]_{p_1}\ar[rd]_-{p_2}&X\times Y_2\ar[d]_{q_1}\ar[rd]^-{q_2}\\
		&Y&X&Y_2.
	}\eneqn
	Let $F\in\Derb(\cor_X)$,  $K\in\Derb(\cor_{Y_2\times X})$ and denote by $B_a$ the closed ball of $Y_1$ with center $0$ and radius $a\geq0$. Set for short $\cor_B\eqdot\cor_{B_a}$. Then 
	\eqn
	\cor_B\star(K\npconv F)&\simeq& \roim{s}(\cor_B\etens\roim{q_2}(K\ltens\opb{q_1}F))\\
	&\simeq&\roim{s}\roim{p_{12}}(\cor_B\etens(K\ltens\opb{q_1}F))\\
	&\simeq&\roim{p_1}\roim{s_{13}}(\cor_B\etens(K\ltens\opb{q_1}F))\\
	&\simeq&\roim{p_1}\roim{s_{13}}((\cor_B\etens K)\ltens\opb{s_{13}}\opb{p_2}F)\\
	&\simeq&\roim{p_1}(\roim{s_{13}}(\cor_B\etens K)\ltens\opb{p_2}F)\simeq(\cor_B\star K)\npconv F.
	\eneqn
	Here, the 2nd isomorphism follows from the fact that $\cor_B$ being  cohomologically constructible,  the functor $\cor_B\etens\scbul$ commutes with (non proper) direct images thanks to~\cite{KS90}*{Prop.~3.4.4}.
	The 5th isomorphism follows from the fact that $s$ is proper on $\supp(\cor_B\etens K)$. 
\end{remark}

\subsection{Lipschitz kernels}

\subsubsection*{A general setting}
We consider two good metric spaces $(X,d_X)$ and $(Y,d_Y)$. 
To avoid confusion, we denote by  $\alpha_X$ and $\alpha_Y$ the constants appearing in~\eqref{hyp:dist1},  by $\Delta^X_a$ and $\Delta^Y_a$ the thickenings of the diagonals, by $\stDX_a$ and  $\stDY_a$ the associated thickening kernels and by $\rho^X_{a,b}$ and 
$\rho^Y_{a,b}$ the restriction functors. 
Recall the  notation for $F\in\Derb(\cor_X)$
\eqn
&&\Phi_K(F)=K\conv F.
\eneqn

\begin{definition}\label{def:lipFct}
	Let $\delta>0$ and let $K\in\Derb(\cor_{Y\times X})$. 
	We say that $K$ is a $\delta$-Lipschitz kernel  from $X$ to $Y$ if  there exists $\rho>0$ such that $\rho\leq\alpha_X$ and $\delta\rho\leq\alpha_Y$ and
	there are  morphisms of sheaves 
	$\sigma_a\cl \stDY_{\delta a}\conv K\to K\conv \stDX_a$ for $0\leq a\leq\rho$  satisfying the following compatibility relations:
	\begin{nnum} 
		\item for $0\leq a\leq b \leq\rho$,  the diagram of sheaves  commutes:
		\eq\label{eq:sqdiagcom0}
		&&\ba{l}\xymatrix{
			\stDY_{\delta b}\conv K \ar[d]_-{\rho^Y_{\delta a,\delta b}}\ar[r]^-{\sigma_b}&K\conv \stDX_b\ar[d]^-{\rho^X_{a,b}}\\\
			\stDY_{\delta a}\conv K\ar[r]^-{\sigma_a}& K\conv\stDX_a,
		}
		\ea\eneq
		\item  for  $0\leq a, \; b$ and $a+b\leq\rho$, the diagram of sheaves commutes: 
		\eq\label{eq:sqdiagcom1}
		&&\ba{l}\xymatrix{
			\stDY_{\delta (a+b)}\conv K\ar[rr]^-{\stDY_{\delta b}\conv \sigma_a} \ar@/_1.5pc/[rrrr]_-{\sigma_{a+b}}&& \stDY_{\delta b}\conv K\conv \stDX_{a} \ar[rr]^-{\sigma_b\conv\stDX_a} && K\conv \stDX_{a+b}.
		}
		\ea\eneq
	\end{nnum}
	
	A Lipschitz kernel  is a  $\delta$-Lipschitz kernel for some $\delta >0$.
\end{definition}
Note that thanks to the hypothesis that $a\leq\alpha_X$, we could have written $\cor_{\Delta^X_a}$ instead of $\stDX_a$ and
similarly with $Y$ instead of $X$. We have chosen to use the notation $\stD$ thanks to the next lemma.
\begin{remark}\label{rem:danger}
	Of course,  a  Lipschitz kernel form $X$ to $Y$ is not necessarily a Lipschitz kernel from $Y$ to $X$. However, when there is no risk of confusion, we shall simply call $K$ ``a Lipschitz kernel''.
\end{remark}
\begin{lemma}
	If $K$ is a Lipschitz kernel, then  for all $a\geq0$ there are  morphisms of sheaves 
	$\sigma_a\cl \stDY_{\delta a}\conv K\to K\conv \stDX_a$  and moreover~\eqref{eq:sqdiagcom0} and~\eqref{eq:sqdiagcom1} are satisfied for all $a,b\geq0$.
\end{lemma}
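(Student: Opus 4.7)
The plan is to mimic the construction used in the proof of Theorem~\ref{th:pshK}. The morphisms $\sigma_a$ are given only on $[0,\rho]$, and we extend them to all of $\R_{\geq 0}$ by a step-by-step procedure using the monoidal structures of $\stDX$ and $\stDY$ (which are globally defined on $\R_{\geq 0}$, by Theorem~\ref{th:pshK}). Set $\lambda \eqdot \rho/2$. For any $a\geq 0$, write uniquely $a=n\lambda+r$ with $n\in\N$ and $0\leq r<\lambda$. Using
\eqn
&&\stDY_{\delta a}\simeq \stDY_{\delta\lambda}^{\conv n}\conv \stDY_{\delta r},\qquad \stDX_a\simeq \stDX_{\lambda}^{\conv n}\conv \stDX_{r},
\eneqn
define $\sigma_a$ as the composition that first applies $\sigma_r$ on the right, then pushes each copy of $\stDY_{\delta\lambda}$ across $K$ using $\sigma_\lambda$, from right to left. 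Concretely,
\eqn
&&\sigma_a\eqdot(\sigma_\lambda\conv\stDX_{(n-1)\lambda+r})\circ(\stDY_{\delta\lambda}\conv\sigma_\lambda\conv\stDX_{(n-2)\lambda+r})\circ\cdots\circ(\stDY_{\delta n\lambda}\conv\sigma_r).
\eneqn

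First I would verify that, for $a\leq\rho$, this definition recovers the given $\sigma_a$: this is a direct consequence of axiom~\eqref{eq:sqdiagcom1} applied on the interval $[0,\rho]$ (and of $\sigma_0$ being the identity, which follows from the case $a=b=0$ of~\eqref{eq:sqdiagcom1}). More generally, I would prove the auxiliary \emph{decomposition lemma}: for any sequence $a_1,\ldots,a_k\geq 0$ with each $a_i\leq\rho$ and $\sum a_i=a$, the composition
\eqn
&&(\sigma_{a_1}\conv\stDX_{a_2+\cdots+a_k})\circ\cdots\circ(\stDY_{\delta(a_1+\cdots+a_{k-1})}\conv\sigma_{a_k})
\eneqn
coincides with $\sigma_a$. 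This is proved by induction on $k$, with the inductive step provided by axiom~\eqref{eq:sqdiagcom1} on $[0,\rho]$, using that any two decompositions of $a$ into pieces of size $\leq\rho$ can be connected by a sequence of refinements.

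Given the decomposition lemma, the verifications become routine. For~\eqref{eq:sqdiagcom1} with arbitrary $a,b\geq 0$: choose decompositions of $a$ and $b$ into pieces of size $\leq \rho$, concatenate them into a decomposition of $a+b$, and apply the decomposition lemma to all three of $\sigma_a,\sigma_b,\sigma_{a+b}$; both sides of~\eqref{eq:sqdiagcom1} unfold to the same iterated composition. For~\eqref{eq:sqdiagcom0} with arbitrary $0\leq a\leq b$: choose a decomposition of $b$ refining one of $a$ (so that $b$ equals $a$ plus a piece of size $\leq\rho$) and apply~\eqref{eq:sqdiagcom0} on $[0,\rho]$ together with the compatibility of restriction morphisms $\rho^X_{\bullet,\bullet}$ and $\rho^Y_{\bullet,\bullet}$ furnished by the monoidal presheaf structure.

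The main obstacle is the decomposition lemma, i.e.\ the well-definedness of the extension. Once this is secured, everything else is bookkeeping. It is essentially the same coherence argument as in Theorem~\ref{th:pshK}, transposed from a morphism of monoidal presheaves to the closely related setting of a morphism between the two ``$\stDY$-left / $\stDX$-right'' presheaves $a\mapsto \stDY_{\delta a}\conv K$ and $a\mapsto K\conv\stDX_a$.
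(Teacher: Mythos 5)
Your construction is essentially the paper's own: the paper also extends $\sigma$ by iterating the given morphisms across $K$ one piece of size $\leq\rho$ at a time, and it explicitly leaves the well-definedness and the verification of \eqref{eq:sqdiagcom0} and \eqref{eq:sqdiagcom1} ``to the reader''; your decomposition lemma (independence of the choice of subdivision, via common refinements and axiom \eqref{eq:sqdiagcom1} on $[0,\rho]$) is precisely the missing coherence argument, so your write-up is if anything more complete than the paper's sketch.

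One point to fix: the parenthetical claim that $\sigma_0=\id$ follows from the case $a=b=0$ of \eqref{eq:sqdiagcom1} is not correct --- that case only yields $\sigma_0\circ\sigma_0=\sigma_0$, i.e.\ idempotence, which does not force $\sigma_0$ to be the identity unless $\sigma_0$ is invertible. You do need $\sigma_0=\id$ (for instance when checking \eqref{eq:sqdiagcom0} with $b=a+c$, where $\rho^X_{0,c}\circ\sigma_c=\sigma_0\circ\rho^Y_{0,\delta c}$ appears), so it should be imposed as a normalization in Definition~\ref{def:lipFct} rather than derived; note the paper itself tacitly uses this normalization in the proof of Theorem~\ref{th:FctLip}. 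With that adjustment your argument goes through.
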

\begin{proof}[Sketch of proof]
	Assume we  have constructed the morphisms $\sigma_a$ for $a\leq A$ and let $0\leq b\leq\rho$. One defines the morphism
	\eqn
	\sigma_{a+b}\cl  \stDY_{\delta (a+b)}\conv K&\simeq& \cor_{\Delta_{\delta b}^Y}\conv\stDY_{\delta (a)}\conv K\\
	&\to& \cor_{\Delta_{\delta b}^Y}\conv K \conv\stDX_a\\
	&\to& K\conv  \cor_{\Delta_{b}^X}\conv\stDX_a\simeq K\conv \stDX_{a+b}.
	\eneqn
	The fact that $\sigma_a$ is well-defined and the verification of the compatibility relations~\eqref{eq:sqdiagcom0} and~\eqref{eq:sqdiagcom1} are left to the reader.
\end{proof}

The next result is essentially a reformulation in the language of kernels of~\cite{SMS18}*{Th.~4.3}.

\begin{theorem}[{(The functorial Lipschitz theorem})]\label{th:FctLip}
	Let $(X,d_X)$ and $(Y,d_Y)$ be good metric spaces and let 
	$K\in\Derb(\cor_{Y\times X})$ be a $\delta $-Lipschitz kernel from $X$ to $Y$. Let  $F_1,F_2\in\Derb(\cor_X)$.
	\banum
	\item
	One has 
	$\dist_{Y}(K\conv F_1, K\conv F_2)\leq \delta \cdot \dist_{X}(F_1,F_2)$.
	\item
	Assume moreover that $X$ and $Y$ are  $C^\infty$-manifolds  satisfying~\eqref{hyp:dist2} and~\eqref{hyp:dist4}. \\
	Then $\dist_{Y}(K\npconv F_1, K\npconv F_2)\leq \delta \cdot \dist_{X}(F_1,F_2)$.
	\eanum
\end{theorem}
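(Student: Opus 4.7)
The plan is to reduce the theorem to the statement that whenever $F_1,F_2\in\Derb(\cor_X)$ are $a$-isomorphic in the sense of Definition~\ref{def:convdist}, the images $K\conv F_1$ and $K\conv F_2$ are $\delta a$-isomorphic; taking the infimum over admissible $a$ then yields (a), and (b) follows by the same diagram chase with $\npconv$ replacing $\conv$. So fix $f\cl\stDX_a\conv F_1\to F_2$ and $g\cl\stDX_a\conv F_2\to F_1$ realizing an $a$-isomorphism. Using $\sigma_a$ together with associativity of $\conv$, I would define
\[
\tilde f\cl \stDY_{\delta a}\conv(K\conv F_1)\simeq(\stDY_{\delta a}\conv K)\conv F_1\xrightarrow{\sigma_a\conv F_1}K\conv\stDX_a\conv F_1\xrightarrow{K\conv f}K\conv F_2,
\]
and $\tilde g$ analogously with $g$ in place of $f$. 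These are the natural transports of $f,g$ across $K$ supplied by the Lipschitz structure.

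The content of the proof then lies in verifying the compatibility condition of Definition~\ref{def:convdist}: that $\tilde g\circ(\stDY_{\delta a}\conv\tilde f)$ equals the morphism on $K\conv F_1$ induced by $\rho^Y_{0,2\delta a}$ (and symmetrically for $\tilde f\circ(\stDY_{\delta a}\conv\tilde g)$). Unfolding this composite and using bifunctoriality of $\conv$ to slide $\sigma_a$ past $\conv f$, one separates it into two pieces: the two consecutive Lipschitz moves $\stDY_{\delta a}\conv\sigma_a$ followed by $\sigma_a\conv\stDX_a$, which by the coherence~\eqref{eq:sqdiagcom1} at the pair $(a,a)$ collapse into $\sigma_{2a}$; and the residual map $K\conv\bigl(g\circ(\stDX_a\conv f)\bigr)$, which by the $a$-isomorphism hypothesis on $(F_1,F_2)$ equals $K\conv\rho^X_{0,2a}$. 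The coherence~\eqref{eq:sqdiagcom0} at the pair $(0,2a)$ then identifies $\sigma_{2a}$ followed by $K\conv\rho^X_{0,2a}$ with $\rho^Y_{0,2\delta a}\conv K$ (modulo the monoidal unit isomorphisms), which is exactly the required comparison morphism.

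For part (b), the same formal argument works verbatim with $\npconv$ in place of $\conv$, provided the associativity identification $\stDY_{\delta a}\conv(K\npconv F_i)\simeq(\stDY_{\delta a}\conv K)\npconv F_i$ is available. This is precisely what Proposition~\ref{pro:add4} provides when applied with $Y$ as the distinguished first factor and $X$ as the second, its hypotheses being supplied by the assumptions~\eqref{hyp:dist2} and~\eqref{hyp:dist4} on $Y$. The principal obstacle throughout is purely diagrammatic bookkeeping: one must insert the correct associators so that the axioms~\eqref{eq:sqdiagcom0} and~\eqref{eq:sqdiagcom1} can be invoked at the right positions, and, in the non-proper setting, that Proposition~\ref{pro:add4} is invoked on the correct triple of spaces. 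No deeper theoretical input is required.
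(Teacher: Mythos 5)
Your proposal is correct and follows essentially the same route as the paper's proof: the same transported morphisms $\sigma_a$ followed by $K\conv f$ (resp.\ $K\conv g$), the same diagram chase collapsing the two Lipschitz moves into $\sigma_{2a}$ via~\eqref{eq:sqdiagcom1} and identifying the residual with $K\conv\rho^X_{0,2a}$, and the same final appeal to~\eqref{eq:sqdiagcom0}, with part (b) reduced to Proposition~\ref{pro:add4} exactly as in the paper.
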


\begin{proof}
	(a) Let $F_1,F_2\in\Derb(\cor_X)$  and assume that $F_1$ and $F_2$ are $a$-isomorphic.  
	Hence, there are morphisms 
	\eqn
	&&f\cl \stDX_{a}\conv F_1\to F_2,\quad g\cl \stDX_{a}\conv F_2\to F_1
	\eneqn
	satisfying the conditions of Definition~\ref{def:convdist}. Applying the functor $K\conv$ we get the morphisms given by the dotted arrows
	\eqn
	&&\xymatrix{
		K\conv\stDX_{a}\conv F_1\ar[r]^-{\Phi_K(f)}&K\conv F_2\\
		\stDY_{\delta a}\conv K\conv F_1\ar[u]^{\sigma_a}\ar@{.>}[ru]&
	}\quad
	\xymatrix{
		K\conv\stDX_{a}\conv F_2\ar[r]^-{\Phi_K(g)}&K\conv F_1\\
		\stDY_{\delta a}\circ K\conv F_2\ar[u]^{\sigma_a}\ar@{.>}[ru]&
	}
	\eneqn
	
	Now consider the diagram 
	\eqn
	&&\xymatrix{
		K\conv\stD_{2a}\conv F_1\ar[rr]^{\Phi_K(\stL_a(f))}&&K\conv \stDX_{a}\conv F_2\ar[r]^-{\Phi_K(g)}& K\conv F_1.\\
		\stDY_{\delta a}\conv K\conv \stDX_{a}\conv F_1\ar[rr]^-{\stL^Y_{\delta a}(\Phi_K(f))}\ar[u]^-{ \stL^X_a(\sigma_a)}&&\stDY_{\delta a}\conv K\conv F_2\ar[u]^-{\sigma_a}\ar@{.>}[ru]&\\
		\stDY_{2\delta a}\conv K\conv F_1\ar[u]^{ \stL^Y_{\delta a}(\sigma_a)}\ar@{.>}[rru]&&&
	}
	\eneqn
	The two  diagrams with dotted arrows  commute by the definition of the dotted arrows and the square diagram  commutes by Definition~\ref{def:lipFct} (i).
	The composition of the two vertical arrows is given by $\sigma_{2a}$ by Definition~\ref{def:lipFct} (ii). The composition of the two horizontal arrows is given by $\rho^X_{0,2a}$. 
	Therefore, the composition of the two dotted arrows is given by $\rho^X_{0,2a}\sigma_{2a}=\rho^Y_{0,2\delta a}$.
	The same result holds when interchanging the roles of $F_1$ and $F_2$.
	
	\spa
	(b) The proof is the same as in (a)  after replacing $\conv$ with $\npconv$ 
	and using Proposition~\ref{pro:add4}.
\end{proof}

In particular, we get:

\begin{corollary}\label{cor:FctLip}
	Assume that $K\in\Derb(\cor_{Y\times X})$ is a $\delta$-Lipschitz kernel from $X$ to $Y$ and that there exists a $\opb{\delta}$-Lipschitz kernel 
	$L\in\Derb(\cor_{X\times Y})$ from $Y$ to $X$ such that $\Phi_{L\conv K}\simeq\id_{\Derb(\cor_X)}$. 
	Then for $F_1,F_2\in\Derb(\cor_X)$, one has 
	$\dist_{Y}(K\conv F_1, K\conv F_2)= \delta \cdot\dist_{X}(F_1,F_2)$.
	
	If $X$ and $Y$ are  $C^\infty$-manifolds  satisfying~\eqref{hyp:dist2} and~\eqref{hyp:dist4}, then the same result holds for 
	$K\conv F$ replaced with $K\npconv F$. 
\end{corollary}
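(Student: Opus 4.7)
\textbf{Proof plan for Corollary~\ref{cor:FctLip}.} The inequality $\dist_Y(K\conv F_1,K\conv F_2)\leq\delta\cdot\dist_X(F_1,F_2)$ is immediate from Theorem~\ref{th:FctLip}(a) applied to $K$, so the real content is the reverse inequality $\dist_X(F_1,F_2)\leq\delta^{-1}\dist_Y(K\conv F_1,K\conv F_2)$. The idea is to use $L$ as a ``Lipschitz inverse'' of $K$: applying Theorem~\ref{th:FctLip}(a) to the $\delta^{-1}$-Lipschitz kernel $L$, with inputs $G_i\eqdot K\conv F_i\in\Derb(\cor_Y)$, yields
\eqn
&&\dist_X(L\conv K\conv F_1,L\conv K\conv F_2)\leq \delta^{-1}\cdot\dist_Y(K\conv F_1,K\conv F_2).
\eneqn

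Now invoke the assumption $\Phi_{L\conv K}\simeq\id_{\Derb(\cor_X)}$. Since $\Phi_{L\conv K}(F_i)\simeq L\conv K\conv F_i\simeq F_i$, the left-hand side above equals $\dist_X(F_1,F_2)$, giving the desired reverse inequality. Combined with the direct inequality, equality follows.

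For the non-proper statement, the same two-step argument should be carried out with $\conv$ replaced by $\npconv$ throughout, invoking Theorem~\ref{th:FctLip}(b) at each step; this uses the hypotheses~\eqref{hyp:dist2} and~\eqref{hyp:dist4} on $X$ and $Y$ via Proposition~\ref{pro:add4}. The only non-routine point is identifying $L\npconv(K\npconv F_i)$ with $F_i$. One should first reduce $L\npconv(K\npconv F_i)\simeq (L\npconv K)\npconv F_i$ by invoking the associativity theorem (Theorem~\ref{th:assocnp}), whose hypotheses are covered by the good metric assumptions through Lemma~\ref{lem:add5}, and then observe that under the hypothesis $\Phi_{L\conv K}\simeq\id$ (which identifies $L\conv K$ with $\cor_\Delta$ up to an isomorphism acting trivially on $\Derb(\cor_X)$), one obtains $(L\npconv K)\npconv F_i\simeq F_i$; alternatively, one reduces directly to $\cor_\Delta\conv F_i\simeq F_i$ after exchanging a proper and a non-proper composition. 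This final associativity/identification step is the main technical obstacle; the rest is a formal two-line computation.
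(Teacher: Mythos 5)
Your argument for the proper case is exactly the paper's (the corollary is stated with no written proof, as an immediate consequence of Theorem~\ref{th:FctLip}): the direct inequality is Theorem~\ref{th:FctLip}(a) for $K$, and the reverse one is Theorem~\ref{th:FctLip}(a) for the $\opb{\delta}$-Lipschitz kernel $L$ combined with the associativity~\eqref{eq:assockernels} and the hypothesis $\Phi_{L\conv K}(F_i)\simeq F_i$. That part is complete and correct.

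For the non-proper case, you have correctly isolated the real difficulty, namely the identification $L\npconv(K\npconv F_i)\simeq F_i$, but your proposed resolution does not work as written. Theorem~\ref{th:assocnp}, applied with first kernel $L$, requires $L$ itself to be cohomologically constructible, $q_1$ to be proper on $\supp(L)$, and $\SSi(L)\cap(T^*_YY\times T^*X)\subset T^*_{Y\times X}(Y\times X)$ (with the roles of the factors adapted). None of these follow from hypotheses~\eqref{hyp:dist2} and~\eqref{hyp:dist4}, which are conditions on the metrics of $X$ and $Y$ --- that is, on the diagonal kernels $\cor_{\Delta_a}$ --- and Lemma~\ref{lem:add5} is merely a criterion for~\eqref{hyp:dist4}; it says nothing about an arbitrary kernel $L$. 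Likewise, your parenthetical claim that $\Phi_{L\conv K}\simeq\id$ ``identifies $L\conv K$ with $\cor_\Delta$'' is not formal: an isomorphism of the induced functors does not by itself yield an isomorphism of kernels, and even granting $L\conv K\simeq\cor_\Delta$ one still has to exchange a proper and a non-proper composition. So the reverse inequality in the $\npconv$ statement is not established by your proposal. To be fair, the paper supplies no argument either, and the only associativity result it provides for $\npconv$ (Proposition~\ref{pro:add4}) concerns composition with the thickening kernels $\stD_a$, not with $L$; closing this step would require additional hypotheses on $L$ (constructibility, properness of $q_1$ on its support, and the micro-support condition of Theorem~\ref{th:assocnp}), which you should state explicitly rather than attribute to the good-metric assumptions.
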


\subsubsection*{Lipschitz correspondences}
As above, we denote by $X_i$ and $Y_i$ ($i=1,2$) two copies of $X$ or $Y$. 
We keep the assumptions and notations of the beginning of this section.

We assume to be given a subset $S$ of $Y\times X$ and   consider the diagram

\eq\label{diag:Lip}
&&\xymatrix@R=3ex@C=2ex{
	&Y_{12}\times X_1\ar[ld]_-{p_{12}}\ar[rd]^-{p_{23}}\ar[dd]|-{p_{13}}&&Y_{2}\times X_{12}\ar[ld]_-{q_{12}}\ar[rd]^-{q_{23}}\ar[dd]|-{q_{13}}&\\
	\Delta^Y_b\subset Y_{12}&&S\subset Y_2\times X_1&&\Delta_a^X\subset X_{12}\\
	&Y_1\times X_1\ar@{=}[rr]&&Y_2\times X_2&
}\eneq

We set
\eqn
\Delta_b^Y\times_YS=\opb{p_{12}}(\Delta_b^Y)\cap\opb{p_{23}}(S)\subset Y_{12}\times X_1, \; S\times_X\Delta_a^X=\opb{q_{12}}(S)\cap\opb{q_{23}}(\Delta^X_a)\subset Y_2\times X_{12}.
\eneqn
Note that $\Delta_b^Y\conv S=p_{13}(\Delta_b^Y\times_YS)$ and $S\conv\Delta_a^X=q_{13}(S\times_X\Delta_a^X)$ 
are contained in $Y_1\times X_1=Y_2\times X_2=Y\times X$.
We shall consider one of  the hypotheses~\eqref{hyp:dist22} or~\eqref{hyp:dist33} below
for some  constants $\rho, \delta>0$ such that $ \rho \leq \alpha_X$ and $\delta \rho \leq \alpha_Y$.
\eq\label{hyp:dist22}
&&\left\{\parbox{75ex}{
	(a) $S$ is a closed subset of $Y\times X$,\\
	(b)  the  fibers of the projection $p_{13}\cl\Delta_b^Y\times_YS\to\Delta_b^Y\conv S$  are contractible or empty for $0\leq b\leq\alpha_Y$,\\
	(c) $ S\conv\Delta^X_a\subset \Delta_{\delta a}^Y\conv S$  for $a\leq\rho $.
}\right.
\eneq
\eq\label{hyp:dist33}
&&\left\{\parbox{75ex}{
	(a) $S$ is a closed subset of $Y\times X$,\\
	(b) there a  closed embedding $\iota\cl Y_2\times X_{12}\into Y_{12}\times X_1$ such that 
	$p_{13}\circ\iota=q_{13}$,\\
	(c) $ \iota(S\times_X\Delta^X_a)\subset \Delta_{\delta a}^Y\times_Y S$ for $a\leq\rho $.
}\right.
\eneq

\begin{theorem}\label{th:metricLip}
	Let $S\subset Y\times X$ and consider constants $\rho, \delta>0$ such that $ \rho \leq \alpha_X$ and $\delta \rho \leq \alpha_Y$. 
	One makes either hypothesis~\eqref{hyp:dist22} or  hypothesis~\eqref{hyp:dist33}.
	Then $\cor_S\in \Derb(\cor_{Y\times X})$ is a $\delta$-Lipschitz kernel from $X$ to $Y$.
\end{theorem}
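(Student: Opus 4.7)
Since $0 \leq a \leq \rho \leq \alpha_X$ and $\delta a \leq \delta\rho \leq \alpha_Y$, on the relevant range we may identify $\stDX_a \simeq \cor_{\Delta^X_a}$ and $\stDY_{\delta a} \simeq \cor_{\Delta^Y_{\delta a}}$. The plan is to construct, for $0 \leq a \leq \rho$, the morphism $\sigma_a$ directly from the set-theoretic inclusions provided by the hypotheses, and then to derive the compatibility diagrams from naturality. Unravelling the definition of $\conv$, the source and target of $\sigma_a$ are
$$\cor_{\Delta^Y_{\delta a}} \conv \cor_S \simeq \reim{p_{13}}\cor_{\Delta^Y_{\delta a} \times_Y S}, \qquad \cor_S \conv \cor_{\Delta^X_a} \simeq \reim{q_{13}}\cor_{S \times_X \Delta^X_a},$$
and properness of $p_{13}$, resp.\ $q_{13}$, on the two subsets follows from the properness of $q_1, q_2$ on the metric thickenings (part of the definition of a good metric space) combined with the closedness of $S$.

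Under hypothesis~\eqref{hyp:dist22}, I would define $\sigma_a$ as the composition of three morphisms. First, Lemma~\ref{le:kAcB}(b), applied using the contractibility condition~(b), gives an isomorphism $\reim{p_{13}}\cor_{\Delta^Y_{\delta a} \times_Y S} \isoto \cor_{\Delta^Y_{\delta a} \conv S}$. Second, the closed inclusion $S \conv \Delta^X_a \subset \Delta^Y_{\delta a} \conv S$ from hypothesis~(c) induces the restriction morphism $\cor_{\Delta^Y_{\delta a} \conv S} \to \cor_{S \conv \Delta^X_a}$. Third, Lemma~\ref{le:kAcB}(a) furnishes the canonical morphism $\cor_{S \conv \Delta^X_a} \to \reim{q_{13}}\cor_{S \times_X \Delta^X_a}$. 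Under hypothesis~\eqref{hyp:dist33}, I would instead exploit the closed embedding $\iota$: since $p_{13} \circ \iota = q_{13}$ with $\iota$ proper, we have $\reim{q_{13}}\cor_{S \times_X \Delta^X_a} \simeq \reim{p_{13}}\cor_{\iota(S \times_X \Delta^X_a)}$, and $\sigma_a$ is then the image under $\reim{p_{13}}$ of the restriction morphism $\cor_{\Delta^Y_{\delta a} \times_Y S} \to \cor_{\iota(S \times_X \Delta^X_a)}$ associated with the closed inclusion of hypothesis~(c).

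It then remains to check the two compatibility diagrams. The commutativity of~\eqref{eq:sqdiagcom0} is essentially naturality: for $a \leq b$ the inclusions $\Delta^X_a \subset \Delta^X_b$ and $\Delta^Y_{\delta a} \subset \Delta^Y_{\delta b}$ are manifestly compatible with the inclusions of hypothesis~(c), and each of the building blocks of $\sigma_a$ is natural in $a$. For~\eqref{eq:sqdiagcom1}, the key geometric observation is that the set-theoretic inclusion underlying $\sigma_{a+b}$ factors as
$$S \conv \Delta^X_{a+b} \;=\; (S \conv \Delta^X_a) \conv \Delta^X_b \;\subset\; (\Delta^Y_{\delta a} \conv S) \conv \Delta^X_b \;\subset\; \Delta^Y_{\delta a} \conv \Delta^Y_{\delta b} \conv S \;=\; \Delta^Y_{\delta (a+b)} \conv S,$$
with an analogous factorization under~\eqref{hyp:dist33}, so that the induced morphism of sheaves factors through the composition $(\sigma_b \conv \stDX_a) \circ (\stDY_{\delta b} \conv \sigma_a)$. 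The main obstacle I anticipate is the bookkeeping of the associativity isomorphisms for $\conv$ together with the isomorphisms $\cor_{\Delta^Y_{\delta a}} \conv \cor_{\Delta^Y_{\delta b}} \simeq \cor_{\Delta^Y_{\delta(a+b)}}$ furnished by Lemma~\ref{lem:add1}(b), but no conceptually new ingredient should be needed beyond these.
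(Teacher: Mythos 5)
Your construction of $\sigma_a$ is exactly the one in the paper: under~\eqref{hyp:dist22} the composite of the isomorphism from Lemma~\ref{le:kAcB}(b), the restriction morphism for the closed inclusion $S\conv\Delta^X_a\subset\Delta^Y_{\delta a}\conv S$, and the morphism from Lemma~\ref{le:kAcB}(a); under~\eqref{hyp:dist33} the morphism $\cor_{\Delta^Y_{\delta a}\times_YS}\to\oim{\iota}\cor_{S\times_X\Delta^X_a}$ pushed forward by $\reim{p_{13}}$. The only difference is that you also sketch the verification of the compatibility diagrams~\eqref{eq:sqdiagcom0} and~\eqref{eq:sqdiagcom1}, which the paper leaves implicit, and your factorization of the inclusions is the right way to do it.
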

\begin{proof}
	(i)  It is enough to construct  a natural morphism  of sheaves
	\eq\label{eq:morYSSX}
	&&\cor_{\Delta^Y_{\delta a}}\conv\cor_S\to\cor_S\conv \cor_{\Delta^X_a}\mbox{ for $a\leq \rho$ 
		(which implies $\delta a\leq\alpha_Y$}).
	\eneq
	
	\spa
	(ii)--(a) Assume~\eqref{hyp:dist22}. Since the closed set  $\Delta_{\delta a}^Y\conv S$ contains the closed set $S\conv\Delta^X_a$, we have a morphism of sheaves
	\eq\label{eq:morYSX}
	&&\cor_{\Delta_{\delta a}^Y\conv S}\to\cor_{S\conv\Delta^X_a}.
	\eneq
	By~Lemma~\ref{le:kAcB} and the hypothesis, there is an isomorphisms and a morphism
	\eqn
	&&\cor_{\Delta_{\delta a}^Y\conv S}\simeq \cor_{\Delta_{\delta a}^Y}\conv \cor_S,\quad \cor_{S\conv\Delta^X_a}\to \cor_{S}\conv\cor_{\Delta^X_a}.
	\eneqn
	Together with~\eqref{eq:morYSX}, this defines~\eqref{eq:morYSSX}.
	
	\spa
	(ii) --(b)  Assume~\eqref{hyp:dist33}. By this hypothesis, there is a natural morphism 
	\eq\label{eq:RXconvSB}
	&& \cor_{\Delta_{\delta a}^Y\times_Y S}\to \oim{\iota}\cor_{S\times_X\Delta_a^X}.
	\eneq
	Now remark that 
	\eqn
	&&\cor_{S\times_X\Delta_a^X}\simeq \opb{q_{12}}\cor_S\ltens\opb{q_{23}}\cor_{\Delta_a^X},\quad \cor_{\Delta_{\delta a}^Y\times_Y S}\simeq \opb{p_{12}}\cor_{\Delta_a^X}\ltens\opb{p_{23}} \cor_S.
	\eneqn
	By~\eqref{eq:RXconvSB}, we get the morphisms
	\eqn
	\cor_{\Delta_{\delta a}^Y}\conv\cor_S&\simeq& \reim{p_{13}} (\opb{p_{12}}\cor_{\Delta_a^X}\ltens\opb{p_{23}} \cor_S)
	\simeq \reim{p_{13}} \cor_{\Delta_{\delta a}^Y\times_Y S}\\
	&\to&\reim{p_{13}} \oim{\iota} \cor_{S\times_X\Delta_a^X}
	\simeq\reim{p_{13}} \oim{\iota}(\opb{q_{12}}\cor_S\ltens\opb{q_{23}}\cor_{\Delta_a^X})\\
	&\simeq&\reim{q_{13}}(\opb{q_{12}}\cor_S\ltens\opb{q_{23}}\cor_{\Delta_a^X})
	\simeq \cor_S\conv\cor_{\Delta_a^X}.
	\eneqn
	We have thus constructed the morphism~\eqref{eq:morYSSX}. 
\end{proof}

Let $f \cl X\to Y$  be a continuous map. We set $\Gamma_f= \{(f(x),x) \in Y \times X \}$.

\begin{corollary}\label{cor:Lip}
	Let $f \colon (X,d_X) \to (Y, d_Y)$ be a $\delta$-Lipschitz map. Then $\cor_{\Gamma_f}$ is a $\delta$-Lipschitz kernel from $X$ to $Y$.
\end{corollary}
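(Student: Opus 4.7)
The plan is to deduce the corollary from Theorem~\ref{th:metricLip} applied to $S = \Gamma_f \subset Y\times X$, verifying hypothesis~\eqref{hyp:dist22}. I first choose $\rho > 0$ small enough that $\rho \leq \alpha_X$ and $\delta\rho \leq \alpha_Y$, which is possible since both $\alpha_X$ and $\alpha_Y$ are positive.

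For condition (a), the continuity of $f$ makes $\Gamma_f = \{(f(x),x) : x\in X\}$ closed in $Y\times X$. For condition (c), I would unwind both sides: a point of $\Gamma_f \conv \Delta_a^X$ has the form $(f(x_1),x_2)$ with $d_X(x_1,x_2)\leq a$, while $\Delta_{\delta a}^Y \conv \Gamma_f = \{(y,x) \in Y\times X : d_Y(y,f(x)) \leq \delta a\}$. Taking the Lipschitz condition $d_Y(f(x_1),f(x_2)) \leq \delta\, d_X(x_1,x_2) \leq \delta a$ shows that $(f(x_1),x_2)$ lies in $\Delta_{\delta a}^Y \conv \Gamma_f$, with witness $y_2 = f(x_2)$.

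For condition (b), I analyze the fiber of $p_{13} \colon \Delta_b^Y \times_Y \Gamma_f \to \Delta_b^Y \conv \Gamma_f$ over a point $(y_1,x_1)$. Unwinding, $\Delta_b^Y \times_Y \Gamma_f = \{(y_1,y_2,x_1) : d_Y(y_1,y_2)\leq b,\ y_2 = f(x_1)\}$, so the fiber over $(y_1,x_1)$ is either empty (when $d_Y(y_1,f(x_1)) > b$) or consists of the single point $(y_1,f(x_1),x_1)$; in either case it is contractible or empty as required.

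Having verified all three conditions of~\eqref{hyp:dist22}, Theorem~\ref{th:metricLip} directly yields that $\cor_{\Gamma_f}$ is a $\delta$-Lipschitz kernel from $X$ to $Y$. There is essentially no obstacle in this argument; the corollary is a straightforward instantiation of the theorem, the only real content being the Lipschitz bound used in step (c). (One could alternatively try verifying hypothesis~\eqref{hyp:dist33} by defining $\iota(y_2,x_1,x_2) = (y_2,f(x_2),x_2)$, but this map fails to be injective in the $x_1$ variable, so using~\eqref{hyp:dist22} is the natural route.)
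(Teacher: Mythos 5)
Your proposal is correct and follows essentially the same route as the paper: both verify hypothesis~\eqref{hyp:dist22} for $S=\Gamma_f$ (closedness of the graph, singleton-or-empty fibers of $p_{13}$, and the inclusion $\Gamma_f\conv\Delta^X_a\subset\Delta^Y_{\delta a}\conv\Gamma_f$ via the Lipschitz bound) and then invoke Theorem~\ref{th:metricLip}. Your explicit choice of $\rho$ is a harmless addition the paper leaves implicit.
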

\begin{proof}
	(i) We shall check~\eqref{hyp:dist22} with $S=\Gamma_f$. Of course, this set  is closed in $Y\times X$.
	
	\spa
	(ii) Let us check~\eqref{hyp:dist22}~(b). One has 
	\eqn
	&&\Delta_b^Y\times_YS=\{(y_1,y_2,x)\in Y \times Y\times X;d_Y(y_1,y_2)\leq b, y_2=f(x)\}.
	\eneqn
	For $(y_1,x)\in \Delta_b^Y\conv S$, $\opb{q_{13}}(y_1,x)\cap \Delta_b^Y\times_YS$ is the set $y_2=\{f(x)\}$ if 
	$d_Y(y_1,y_2)\leq b$ and is empty otherwise.
	
	\spa
	(iii) Let us check~\eqref{hyp:dist22}~(c). One has
	\eqn
	&&\Delta_{\delta a}^Y\conv S=\{(y,x)\in Y\times X; \exists y'\in Y,\, d_Y(y,y')\leq\delta a, \,y'=f(x)\},\\
	&&S\conv\Delta^X_a.=\{(y,x)\in Y\times X; \exists x'\in X,\, d_X(x,x')\leq a, \, y=f(x')\}.
	\eneqn
	Let $(y,x)\in S\conv\Delta^X_a$ and let $x'\in X$ be such that $d_X(x,x')\leq a, \, y=f(x')$. Set $y'=f(x)$. Then $d_Y(y,y')\leq \delta a$ since $f$ is $\delta$-Lipschitz and therefore $(y,x)\in \Delta_{\delta a}^Y\conv S$.
\end{proof}

\begin{example}
	Let $X=\BBS^1$, $Y=\R^2$ and denote by $S$ the graph of the embedding 
	$j\cl\BBS^1\into\R^2$. Then  $\cor_S \in\Derb(\cor_{Y\times X})$ is a $\delta$-Lipschitz kernel from $X$ to $Y$ with
	$\delta=\frac{\pi}{ \sqrt{2}}$ and defines a fully faithful functor. 
\end{example}

\begin{corollary}\label{cor:lipfun}
	Let $(X,d_X)$ and $(Y,d_Y)$ be good metric spaces and let 
	$f \colon X \to Y$ be a $\delta $-Lipschitz map. Let  $F_1,F_2\in\Derb(\cor_X)$.
	\banum
	\item
	One has 
	$\dist_{Y}(\reim{f}F_1, \reim{f}F_2)\leq \delta \cdot \dist_{X}(F_1,F_2)$.
	\item
	If moreover, $X$ and $Y$ are  $C^\infty$-manifolds satisfying hypotheses~\eqref{hyp:dist2} and~\eqref{hyp:dist4},
	then \\
	$\dist_{Y}(\roim{f} F_1, \roim{f} F_2)\leq \delta \cdot \dist_{X}(F_1,F_2).$
	\eanum
\end{corollary}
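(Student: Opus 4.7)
The statement is a direct composition of two results already proved in the paper, so my plan is simply to set up the appropriate dictionary and cite them.

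First, I would apply Corollary~\ref{cor:Lip}, which asserts that if $f \cl (X,d_X) \to (Y,d_Y)$ is $\delta$-Lipschitz, then the kernel $\cor_{\Gamma_f} \in \Derb(\cor_{Y\times X})$ is a $\delta$-Lipschitz kernel from $X$ to $Y$ in the sense of Definition~\ref{def:lipFct}. Nothing further needs to be checked here: Corollary~\ref{cor:Lip} was established precisely to make this step available.

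Next, I would invoke the identifications between pushforward functors and composition with the graph kernel. For $K=\cor_{\Gamma_f}$ and $F \in \Derb(\cor_X)$, since $q_2\vert_{\Gamma_f}\cl \Gamma_f \isoto X$ and $q_1\vert_{\Gamma_f}=f$ under this identification, one has the standard isomorphism
\eqn
&&\cor_{\Gamma_f}\conv F \simeq \reim{f} F,
\eneqn
and similarly $\cor_{\Gamma_f}\npconv F \simeq \roim{f} F$. These identifications are the ones already used without comment in the proof of Corollary~\ref{cor:stabmetric}.

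Finally, part (a) follows at once by applying Theorem~\ref{th:FctLip}(a) to $K=\cor_{\Gamma_f}$ and transporting the inequality
\eqn
&&\dist_Y(\cor_{\Gamma_f}\conv F_1,\cor_{\Gamma_f}\conv F_2)\leq \delta\cdot \dist_X(F_1,F_2)
\eneqn
through the above isomorphism. For part (b), the additional assumption that $X$ and $Y$ are $C^\infty$-manifolds satisfying~\eqref{hyp:dist2} and~\eqref{hyp:dist4} is exactly the hypothesis needed to invoke Theorem~\ref{th:FctLip}(b), which replaces $\conv$ by $\npconv$; combined with $\cor_{\Gamma_f}\npconv F_i \simeq \roim{f}F_i$, this yields the claimed bound for $\roim{f}$. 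There is no real obstacle — the whole point of introducing Lipschitz kernels and proving Theorem~\ref{th:FctLip} was to make the present corollary formal; the only step requiring any care is confirming that $\Gamma_f$ genuinely satisfies~\eqref{hyp:dist22} (so that Corollary~\ref{cor:Lip} indeed applies), but this was already verified in the proof of Corollary~\ref{cor:Lip}.
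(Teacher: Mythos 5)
Your proof is correct and is essentially identical to the paper's own argument: the paper likewise observes that $\reim{f}F\simeq\cor_{\Gamma_f}\conv F$ and $\roim{f}F\simeq\cor_{\Gamma_f}\npconv F$ and then applies Corollary~\ref{cor:Lip} together with Theorem~\ref{th:FctLip}. Nothing is missing.
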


\begin{proof}
	First remark that for every $F \in \Derb(\cor_{X})$, $\reim{f} F \simeq \cor_{\Gamma_f} \conv F$ and $\roim{f} F \simeq \cor_{\Gamma_f} \npconv F$. Then apply Corollary \ref{cor:Lip} and Theorem \ref{th:FctLip}.	
\end{proof}

\subsection{Some elementary examples}\label{sect:exa}

\subsubsection*{Vector spaces}
The interleaving distance for sheaves on a (finite dimensional) real normed vector space has been studied with great details in~\cite{KS18}
and in fact this paper is a special case and a guide for the present one. In loc.\ cit.\, the composition $\cor_{\Delta_a}\conv$ was replaced by the convolution $\cor_{B_a}\star$ which  is  equivalent (see Proposition ~\ref{pro:conv}). When the norm is not Euclidian, we get an example where the whole theory developed here applies although the metric space  is not associated with a Riemannian manifold. 

The next result is obvious.
\begin{proposition}\label{th:normedtspace}
	Let $X=\BBV$ be a real finite dimensional Euclidian vector space
	and let  $d_X$ be the associated distance. Then $(X,d_X)$ satisfies
	hypotheses~\eqref{hyp:dist1},~\eqref{hyp:dist2} and~\eqref{hyp:dist4}.
\end{proposition}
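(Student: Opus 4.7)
The proposition is a direct verification of the three hypotheses using standard properties of Euclidean space; any positive $\alpha_X$ works uniformly.

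For~\eqref{hyp:dist1}, (i) closed balls in $\BBV$ are convex and hence so are their pairwise intersections, which are therefore contractible or empty; (ii) such balls are compact in a finite-dimensional Euclidean space and the fibers of $q_i\vert_{\Delta_a}$ are balls, so $q_i$ is proper on $\Delta_a$; (iii) the identity $\Delta_a\conv\Delta_b = \Delta_{a+b}$ follows from the triangle inequality together with the existence, for any two points $x_1, x_3 \in \BBV$, of an intermediate point on the segment $[x_1, x_3]$ at any prescribed distance $\leq d_X(x_1, x_3)$ from $x_1$. For~\eqref{hyp:dist2}(a) and (b), both $\Delta_a^\circ$ and $\Omega^+$ are convex in their ambient vector spaces $\BBV \times \BBV$ and $\BBV \times \BBV \times \R$ (as preimages of convex sets under affine maps), and a globally convex subset of a vector space is l.t.c.\ with $W$ the whole space and $\phi = \id$. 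For~\eqref{hyp:dist4}, the Euclidean distance $d_X(x,y) = \|x-y\|$ is $C^\infty$ on $(\BBV\times\BBV)\setminus\Delta$ with partial differentials $d_x d_X = (x-y)/\|x-y\|$ and $d_y d_X = -(x-y)/\|x-y\|$ never vanishing, so Lemma~\ref{lem:add5} applies.

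The only hypothesis that requires a short topological argument is~\eqref{hyp:dist2}(c). Fix $x\neq y$ and $0<a$, and set $Z := B_a(x)\cap B_a^\circ(y)$ and $A := B_a(x)\setminus B_a^\circ(y)$. We may assume $d_X(x,y) < 2a$, since otherwise $Z$ is empty. Because $Z$ is open in the compact set $B_a(x)$ with closed complement $A$, the distinguished triangle
\begin{equation*}
\cor_Z \to \cor_{B_a(x)} \to \cor_A \xrightarrow{+1}
\end{equation*}
in $\Derb(\cor_X)$, together with the identifications $\rsect(X;\cor_{B_a(x)})\simeq\rsect(B_a(x);\cor)\simeq\cor$ (as $B_a(x)$ is compact and contractible) and $\rsect(X;\cor_A)\simeq\rsect(A;\cor)$, reduces the vanishing $\rsect(X;\cor_Z)\simeq 0$ to showing that the closed inclusion $A\hookrightarrow B_a(x)$ induces an isomorphism on cohomology, for which it suffices to prove that $A$ is contractible.

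The main (modest) obstacle is thus the contractibility of $A$. Since $y\notin A$, polar coordinates $z = y + r\omega$ centered at $y$ are well defined on $A$: for each $\omega \in \BBS^{n-1}$, the intersection of $A$ with the ray $\{y + r\omega : r \geq 0\}$ is either empty or a closed interval with upper endpoint $r_+(\omega)$, the larger real root of $\|y + r\omega - x\|^2 = a^2$. The radial homotopy $(y + r\omega, t) \mapsto y + \bigl((1-t)r + t\,r_+(\omega)\bigr)\omega$ moves points along rays emanating from $y$, keeping the distance to $y$ at least $a$ (since $r_+(\omega) \geq a$ whenever the ray meets $A$) and the distance to $x$ at most $a$ (by convexity of $[r_-(\omega), r_+(\omega)]$), so it provides a deformation retract of $A$ onto the cap $A_0 := \partial B_a(x) \setminus B_a^\circ(y)$. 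Because $0 < d_X(x,y) < 2a$, the sphere $\partial B_a(x)$ meets both $B_a^\circ(y)$ and its complement, so $A_0$ is a proper closed spherical cap, hence contractible. This completes the verification of~\eqref{hyp:dist2}(c).
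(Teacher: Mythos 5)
Your verification is correct, and it is considerably more detailed than the paper's, which simply declares the proposition obvious (it is also subsumed by Theorem~\ref{th:riemann1}, since a Euclidean space is a complete Riemannian manifold with infinite convexity radius). The parts concerning~\eqref{hyp:dist1}, \eqref{hyp:dist2}(a),(b) and~\eqref{hyp:dist4} are exactly the expected convexity and smoothness arguments. For~\eqref{hyp:dist2}(c) you take a genuinely different route from the one the paper uses in the Riemannian case: Lemma~\ref{le:gu} splits $Z_a(x,y)$ along the bisector $\{z\,;\,d(x,z)=d(y,z)\}$ and fibers each piece over a sphere with half-open interval fibers, whereas you use the excision triangle $\cor_Z\to\cor_{B_a(x)}\to\cor_A\to[+1]$ and prove that $A=B_a(x)\setminus B^\circ_a(y)$ is contractible. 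Your argument is more elementary and self-contained in the Euclidean setting (it relies on the explicit quadratic formula for the radial roots $r_\pm(\omega)$, so it would not transport to a general Riemannian manifold as Lemma~\ref{le:gu} does, but that is irrelevant here).

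One detail in the contractibility argument is inaccurate, though harmlessly so. The time-one map of your radial homotopy sends $y+r\omega$ to $y+r_+(\omega)\omega$, so its image is $A_+\eqdot\{y+r_+(\omega)\omega\}$, which is in general a \emph{proper} subset of $A_0=\partial B_a(x)\setminus B^\circ_a(y)$: when $a<d_X(x,y)<2a$, for directions $\omega$ close to tangency with $\partial B_a(x)$ both roots satisfy $a\leq r_-(\omega)<r_+(\omega)$, and then $y+r_-(\omega)\omega$ belongs to $A_0$ but is moved by the homotopy. So the homotopy is not a deformation retraction onto $A_0$, and the sentence justifying contractibility via $A_0$ being a spherical cap does not apply to the actual retract. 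The repair is immediate: the homotopy does stay in $A$, equals the identity at $t=0$, and its time-one map is a retraction onto $A_+$ fixing $A_+$ pointwise, so $A$ is homotopy equivalent to $A_+$; and $A_+$ is homeomorphic, via $z\mapsto (z-y)/\vvert z-y\vvert$, to the set $D\subset\BBS^{n-1}$ of directions whose ray meets $A$, which one checks is cut out by a single inequality $\langle\omega,x-y\rangle\geq\kappa$ with $0\leq\kappa<\vvert x-y\vvert$ (namely $\kappa=\vvert x-y\vvert^2/2a$ if $\vvert x-y\vvert\leq\sqrt2\,a$ and $\kappa=\sqrt{\vvert x-y\vvert^2-a^2}$ otherwise), hence is a nonempty proper closed spherical cap and is contractible. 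With $A_0$ replaced by $A_+$ the proof is complete.
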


In the situation of Proposition~\ref{th:normedtspace},  the bi-thickening kernel
is given by
\eqn\label{eq:bitheuclidian}
\stD_a&\simeq&
\begin{cases}
	\cor_{\Delta_a }\text{ if } a\geq0,\\
	\cor_{\Delta^\circ_{-a}}\,[n] \text{ if }a<0.
\end{cases}
\eneqn
More precisely, in this situation, the sheaf $\stKd$ is described, up to isomorphism, in~\cite{GKS12}*{Exa.~3.11} by the distinguished triangle in $\Derb(\cor_{\R^n\times \R^n\times \R})$:
\eqn
&&\cor_{\{\vert x-y\vert<-t\}}[n]\to \stKd\to 
\cor_{\{\vert x-y\vert\leq t\}}\xrightarrow[{}]{+1}
\eneqn

\subsubsection*{The real line}
Let $X=\R$ be the real line. Recall that, $\cor$ being a field, one has an isomorphism

\eq\label{eq:shvonR}
&&F\simeq\bigoplus_jH^j(F)\,[-j]\mbox{ for } F\in\Derb(\cor_X).
\eneq
Hence, the study of objects of $\Derb(\cor_X)$ is reduced to that of objects of $\md[\cor_X]$. But, as it is well-known, there exist non zero morphisms 
between objects concentrated in different degrees. 

Constructible sheaves with compact support on $\R$ (over a field) are classified via the famous theorem of  Crawley-Boevey~\cite{CB14}.
See also~\cite{Gu19} for a formulation in the language of constructible sheaves and see~\cite{KS18}*{Th.~1.17} for the case   of not necessarily compactly supported sheaves.
Distances on such sheaves are studied with great details in~\cite{BG18}.
Recall that in this setting the thickening of the identity is provided by the following family of endofunctors of $\Derb(\cor_\R)$,
$\cor_{B_a} \star$, $a\geq0$, where $B_a=[-a,a]$.

\subsection{Example: the Fourier-Sato transform}\label{sect:FS}

Consider first the topological $n$-sphere  ($n>0$) defined as follows. Let $\BBV$ be a real vector space of dimension $n+1$, set $\dot\BBV=\BBV\setminus\{0\}$ and $\BS\eqdot\dot\BBV/\R^+$ where $\R^+$ is the multiplicative group $\R_{>0}$. Define similarly the dual sphere $\BS^*$, starting with $\BBV^*$. The sets
\eq\label{eq:PI}
&&P=\{(y,x)\in\BS^*\times\BS;\langle y,x\rangle\geq0\},\quad I=\{(y,x)\in\BS^*\times\BS;\langle y,x\rangle>0\},
\eneq
are well-defined. 
We define the kernel
\eq\label{eq:kerI}
&&K_I=\cor_I\ltens(\omega_{\BS^*}\etens\cor_\BS).
\eneq
Note that $K_I\simeq \rhom(\cor_P,\omega_{\BS^*}\etens\cor_\BS)$, which is in accordance with~\cite{GKS12}*{eq~(1.21)}. Moreover, $K_I\simeq\cor_I\,[n]$ up to the choice of an orientation on $\BBS^*$. 

The Fourier-Sato transform $\FS^\wedge$ and its inverse $\FS^\vee$ are  the functors
\eq\label{eq:FS2}
&&\xymatrix{
	\FS^\wedge\eqdot \cor_P\conv\cl\Derb(\cor_{\BS})\ar@<.5ex>[r]&\Derb(\cor_{\BS^*})\ar@<.5ex>[l]\cl \conv K_I\eqdot\FS^\vee
}
\eneq

\begin{theorem}[{see~\cite{SKK73}}]\label{th:FS1}
	The functor $\FS^\wedge$ and the functor $\FS^\vee$ are equivalences of categories quasi-inverse to each other.
\end{theorem}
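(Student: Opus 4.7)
The plan is to reduce the theorem to two kernel composition identities and verify each by a stalk computation. More precisely, $\FS^\vee \circ \FS^\wedge$ and $\FS^\wedge \circ \FS^\vee$ are the functors associated to kernels $K \in \Derb(\cor_{\BS \times \BS})$ and $K' \in \Derb(\cor_{\BS^* \times \BS^*})$, each obtained by composing $\cor_P$ and $K_I$ (with the swap $v$ of~\eqref{eq:volte} used to align the factors) over the intermediate $\BS^*$ and $\BS$ respectively. By associativity of kernel convolution~\eqref{eq:assockernels}, the theorem reduces to the isomorphisms $K \simeq \cor_{\Delta_\BS}$ and $K' \simeq \cor_{\Delta_{\BS^*}}$.

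By the $\BS \leftrightarrow \BS^*$ symmetry of the construction, it suffices to treat $K$. Unfolding the definitions of $\cor_P$ and $K_I = \cor_I \ltens (\omega_{\BS^*} \etens \cor_\BS)$, the stalk of $K$ at $(x,x') \in \BS \times \BS$ is
\[
K_{(x,x')} \simeq \rsect_c(\BS^*; \cor_{I_x \cap P_{x'}} \ltens \omega_{\BS^*}),
\]
where $I_x = \{y \in \BS^* : \langle y,x\rangle > 0\}$ is an open hemisphere and $P_{x'} = \{y \in \BS^* : \langle y,x'\rangle \geq 0\}$ is a closed hemisphere. Three cases arise. When $x = x'$, the inclusion $I_x \subset P_x$ gives $I_x \cap P_x = I_x$, and Verdier duality on the contractible open manifold $I_x$ yields $\rsect_c(I_x; \omega_{I_x}) \simeq \cor$, matching the stalk of $\cor_{\Delta_\BS}$ on the diagonal. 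When $x = -x'$, the conditions $\langle y,x\rangle > 0$ and $-\langle y,x\rangle \geq 0$ are incompatible, so $I_x \cap P_{-x} = \varnothing$ and the stalk vanishes, matching the off-diagonal stalk of $\cor_{\Delta_\BS}$.

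The main obstacle is the generic case $x \neq \pm x'$, where one must show that $\rsect_c(\BS^*; \cor_{I_x \cap P_{x'}} \ltens \omega_{\BS^*}) \simeq 0$. My preferred approach is to identify the kernels $\cor_P$ and $K_I$, via the quotient $\dot\BBV \to \BS = \dot\BBV/\R_{>0}$, with the standard $\R_{>0}$-conic half-space kernels on $\dot\BBV^* \times \dot\BBV$ underlying the classical Fourier-Sato transform of~\cite{SKK73}, and then descend the known vector-space-level equivalence to the sphere via the projection formula for $\R_{>0}$-invariant pushforward. A self-contained alternative applies the open/closed distinguished triangle for the decomposition $P_{x'} = I_{x'} \sqcup E_{x'}$, with $E_{x'} = \{y : \langle y,x'\rangle = 0\}$ the equator, reducing the stalk to the compactly-supported cohomology of $I_x \cap I_{x'}$ and $I_x \cap E_{x'}$; these are open hemispheres in spheres of dimensions $n$ and $n-1$ respectively, each contributing a copy of $\cor$ after Verdier duality, and a short computation shows the connecting morphism in the associated long exact sequence is an isomorphism, forcing the total stalk to vanish. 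Once the three cases are checked, the $\R$-constructibility of $K$ and the canonicity of the stalk identifications glue them into a global isomorphism $K \simeq \cor_{\Delta_\BS}$, and the mirror argument for $K'$ completes the proof.
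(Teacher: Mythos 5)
Your strategy --- reduce, via associativity of $\conv$, to the kernel identities $\oim{v}(K_I)\conv\cor_P\simeq\cor_{\Delta_\BS}$ and $\cor_P\conv\oim{v}(K_I)\simeq\cor_{\Delta_{\BS^*}}$ and check them on stalks --- is viable but is not the route the paper takes. The paper identifies $\cor_P$ with $\stD_{\pi/2}$, the value at $\pi/2$ of the metric bi-thickening kernel of the Riemannian sphere (writing $\cor_{\Delta_{\pi/2}}\simeq\cor_{\Delta_{\pi/4}}\conv\cor_{\Delta_{\pi/4}}$), so that invertibility comes for free from the monoidal presheaf structure on $(\R,+)$ of Proposition~\ref{pro:bithick1}, and then identifies the inverse $\stD_{-\pi/2}$ with $K_I$ via Lemma~\ref{le:FS1}; this yields the isometry statement of Theorem~\ref{th:FS2} at the same time, which your computation does not.

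Two steps of your argument are genuine gaps. The generic case $x\neq\pm x'$ carries the whole content of the theorem, and neither of your two routes establishes it: descending from the conic kernels on $\dot\BBV^*\times\dot\BBV$ amounts to citing the result of~\cite{SKK73} that is being proved, and in the triangle argument the long exact sequence reduces to $0\to H^{-1}\to\cor\to\cor\to H^{0}\to0$, so the assertion that the connecting morphism is an isomorphism is \emph{exactly equivalent} to the vanishing you want; writing ``a short computation shows'' leaves the crucial point unproved, and this is precisely where an orientation or sign error would hide. A clean way to close it: for $x\neq\pm x'$, the central (gnomonic) projection $y\mapsto y/\langle y,x\rangle$ identifies the open hemisphere $I_x$ with the affine hyperplane $\{\langle z,x\rangle=1\}\cong\R^n$ and carries $I_x\cap P_{x'}$ to the closed affine half-space $\{\langle z,x'\rangle\geq0\}\cong\R^{n-1}\times[c,\infty)$ (a nonconstant affine condition exactly because $x'\neq\pm x$); since $\rsect_c$ of a half-open interval vanishes, the K{\"u}nneth formula gives $\rsect_c(\BS^*;\cor_{I_x\cap P_{x'}}\tens\omega_{\BS^*})\simeq0$. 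This half-open-interval mechanism is the same one that drives Lemma~\ref{le:gu}.

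Second, stalkwise isomorphisms do not by themselves produce an isomorphism in $\Derb(\cor_{\BS\times\BS})$; ``canonicity of the stalk identifications'' is not an argument. You must first construct a global comparison morphism and only then check it on stalks: for instance, the stalk computation shows that the composed kernel is supported on the diagonal, after which the morphism to $\cor_{\Delta_\BS}$ is obtained by adjunction exactly as in the proofs of Lemma~\ref{le:kAcB} and Lemma~\ref{lem:add3}. With these two points repaired, your proof is correct.
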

We shall give a proof of this result at the same time as we shall prove Theorem~\ref{th:FS2} below.

Now, we consider the  $n$-sphere $\BBS^n$  of radius $1$   embedded in the Euclidian space   $\R^{n+1}$ and endowed  with its canonical Riemannian metric. 
Denoting by $\vvert\cdot\vvert$ the Euclidian norm on $\R^{n+1}$, the map 
\eqn
&&\R^{n+1}\setminus\{0\}\to\BBS^n,\quad x\mapsto x/\vvert x\vvert
\eneqn
identifies the topological sphere $\BS^n=(\R^{n+1}\setminus\{0\})/\R^+$ and the Euclidian sphere $\BBS^n$. 

The isomorphism $\R^n\simeq\R^{n*}$ induces the isomorphism $\BBS^n\simeq\BBS^{n*}$ and we shall identify these two spaces.
When there is no risk of confusion, we write for short  $\BBS\eqdot\BBS^n$.
Recall that (using the notations defined in~\eqref{hyp:riemann}):
\eqn
&&r_\iinj(\BBS^n)=\pi,\quad r_{\rconv}(\BBS^n)=\pi/2.
\eneqn
The next result is obvious and is also a corollary of Theorem~\ref{th:riemann1}. 

\begin{proposition}\label{pro:FS1}
	The metric space $\BBS$  satisfies~\eqref{hyp:dist1},~\eqref{hyp:dist2} and~\eqref{hyp:dist4} when choosing $\alpha_\BBS<\pi/2$.
\end{proposition}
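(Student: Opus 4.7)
The cleanest proof observes that $\BBS^n$ is a complete Riemannian manifold with $r_{\rconv}(\BBS^n)=\pi/2$, so that $\alpha_\BBS<r_{\rconv}(\BBS^n)$ and the claim follows immediately from Theorem~\ref{th:riemann1}, which establishes precisely these properties for any complete Riemannian manifold with strictly positive convexity radius. This is presumably what the paper means by ``obvious''.

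For a direct verification, the key geometric input is that every closed metric ball of radius $\leq\alpha_\BBS<\pi/2$ on $\BBS^n$ is geodesically strictly convex, hence contractible, with pairwise intersections again convex (or empty), while $d_\BBS$ is $C^\infty$ with nonvanishing partial differentials off the diagonal on any such ball since $\alpha_\BBS<r_{\iinj}(\BBS^n)=\pi$. From this,~\eqref{hyp:dist1}(i) reduces to convexity of intersections, (ii) to compactness of $\BBS^n$, and (iii) to the existence of an intermediate point on the unique minimizing geodesic between two points at distance $\leq a+b\leq\alpha_\BBS$. Hypothesis~\eqref{hyp:dist4} is immediate from Lemma~\ref{lem:add5}. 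For~\eqref{hyp:dist2}(a) and (b), the defining function ($d_\BBS-a$ or $d_\BBS-t$) is $C^\infty$ with nonzero differential at each boundary point of $\Delta_a^\circ$ (resp.\ $\Omega^+$), so one straightens the zero locus by a local diffeomorphism to realize the set as a linear half-space.

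The main obstacle is~\eqref{hyp:dist2}(c): for $x\neq y$, $0<a\leq\alpha_\BBS$, one must establish
\begin{equation*}
\rsect\bigl(\BBS;\cor_{Z_a(x,y)}\bigr)\simeq 0,\qquad Z_a(x,y)=B_a(x)\cap B_a^\circ(y).
\end{equation*}
If $d_\BBS(x,y)\geq 2a$, the set $Z_a(x,y)$ is empty and the claim is trivial. Otherwise, setting $C\eqdot B_a(x)\cap\{d_\BBS(y,\cdot)\geq a\}$, one has the distinguished triangle
\begin{equation*}
\cor_{Z_a(x,y)}\to\cor_{B_a(x)}\to\cor_C\xrightarrow{+1},
\end{equation*}
and the contractibility of $B_a(x)$ reduces the vanishing to showing that $C$ is a nonempty closed subset of $B_a(x)$ whose global sections form a copy of $\cor$, so that the restriction $\rsect(\BBS;\cor_{B_a(x)})\to\rsect(\BBS;\cor_C)$ is the identity $\cor\to\cor$. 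Nonemptiness is easy: one may take $x$ itself when $d_\BBS(x,y)\geq a$, or the point on $\partial B_a(x)$ along the $yx$-geodesic extended by $a$ past $x$ when $d_\BBS(x,y)<a$ (well defined because $d_\BBS(x,y)+a<2\alpha_\BBS<\pi$ in that case). For contractibility of $C$, one either invokes Theorem~\ref{th:riemann1} directly, or argues geometrically: the outward flow from $y$ along the geodesic through each $z\in C$ provides a deformation retract of $C$ onto the closed spherical cap $\partial B_a(x)\cap\{d_\BBS(y,\cdot)\geq a\}$, which is a topological $(n-1)$-disk inside the sphere $\partial B_a(x)$ by Morse theory applied to $d_\BBS(y,\cdot)|_{\partial B_a(x)}$, whose only critical points are the two intersections of $\partial B_a(x)$ with the geodesic through $x$ and $y$, realizing its global extrema.
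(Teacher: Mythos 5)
Your first paragraph is exactly the paper's proof: the paper simply notes that the proposition is obvious and is a corollary of Theorem~\ref{th:riemann1}, since $\BBS^n$ is a complete Riemannian manifold with $r_{\rconv}(\BBS^n)=\pi/2>\alpha_\BBS$. The supplementary direct verification you add is a correct unwinding of the proof of Theorem~\ref{th:riemann1} in this special case, with your treatment of~\eqref{hyp:dist2}(c) replacing the bisector decomposition used in Lemma~\ref{le:gu} by an excision triangle for the closed complement $B_a(x)\setminus B^\circ_a(y)$.
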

In particular,   $\BBS^n$  admits a bi-thickening $\{\stL_b\}_{b\in\R}$. 

\begin{lemma}\label{le:FS1}
	For $0<a\leq b\leq\pi/2$, one has 
	$\cor_{\Delta^\circ_a}\conv\cor_{\Delta_b}\,[n]\simeq \cor_{\Delta_{b-a}}$. 
\end{lemma}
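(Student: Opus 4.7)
The approach is to identify $\cor_{\Delta^\circ_a}\,[n]$ with the negative part $\stD_{-a}$ of the bi-thickening of Proposition~\ref{pro:bithick1}, and then read off the isomorphism from the monoidal presheaf structure.

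First, by Proposition~\ref{pro:FS1} the metric space $\BBS$ is a good metric space satisfying~\eqref{hyp:dist2} for any $\alpha_\BBS<\pi/2$, so Proposition~\ref{pro:bithick1} provides a metric bi-thickening $\stD$ on $\BBS$ with $\stD_b\simeq \cor_{\Delta_b}$ for $b\in[0,\alpha_\BBS]$ and $\stD_{-c}\simeq\cor_{\Delta^\circ_c}\tens\opb{q_2}\omega_\BBS$ for $c\in(0,\alpha_\BBS]$. Since $\BBS=\BBS^n$ is an oriented Riemannian manifold, $\omega_\BBS\simeq\cor_\BBS\,[n]$ after a choice of orientation, so
\[
\stD_{-c}\simeq\cor_{\Delta^\circ_c}\,[n]\qquad (0<c\leq\alpha_\BBS).
\]
The bi-thickening, being a monoidal presheaf on $(\R,+)$, satisfies
\[
\stD_{-a}\conv\stD_b\simeq \stD_{b-a}\qquad (a,b\in\R).
\]

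The next step is to extend the explicit identifications $\stD_b\simeq \cor_{\Delta_b}$ and $\stD_{-c}\simeq\cor_{\Delta^\circ_c}\,[n]$ from $[0,\alpha_\BBS]$ to the whole interval $[0,\pi/2]$. Given $b\in[0,\pi/2]$, choose $N$ large so that $b/N\leq\alpha_\BBS$; then the monoidal isomorphism gives $\stD_b\simeq \stD_{b/N}^{\conv N}\simeq \cor_{\Delta_{b/N}}^{\conv N}$, and iterated application of Lemma~\ref{le:kAcB}(b) yields $\cor_{\Delta_{b/N}}^{\conv N}\simeq \cor_{\Delta_b}$. What makes the iteration run up to $\pi/2$ is the geometric fact that the convexity radius of $\BBS^n$ is exactly $\pi/2$: for all $a_1,\dots,a_k\geq0$ with $\sum a_i\leq\pi/2$ and all points $x_0,\dots,x_k\in\BBS^n$, the iterated intersection $\bigcap_i B_{a_i}(x_i)$ is either empty or geodesically convex, hence contractible, providing the contractibility of fibers needed to apply Lemma~\ref{le:kAcB}(b). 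The same argument handles $\stD_{-c}\simeq\cor_{\Delta^\circ_c}\,[n]$ for $c\in(0,\pi/2]$.

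Combining everything, for $0<a\leq b\leq\pi/2$ one obtains
\[
\cor_{\Delta^\circ_a}\conv\cor_{\Delta_b}\,[n]
\simeq \stD_{-a}\conv\stD_b
\simeq \stD_{b-a}
\simeq \cor_{\Delta_{b-a}},
\]
since $0\leq b-a\leq\pi/2$ falls in the extended identification range. The main technical obstacle is precisely the extension step: verifying that $\stD_b\simeq\cor_{\Delta_b}$ persists up to the boundary $b=\pi/2$ (where balls become closed hemispheres and strict geodesic convexity fails). This forces one to rely on the exact value of the convexity radius rather than a strict inequality; the limiting case $a=\pi/2$ or $b=\pi/2$ may alternatively be obtained by a direct stalkwise computation, noting that for $d(x,z)\leq b-a$ the fiber of $q_{13}$ over $(x,z)$ is the open geodesic ball $B^\circ_a(x)$, whose compactly supported cohomology is concentrated in degree $n$, while for $d(x,z)>b-a$ the ``lens'' fiber $B^\circ_a(x)\cap B_b(z)$ is a half-open disk with vanishing $\rsect_c$.
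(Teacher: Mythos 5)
Your proposal is correct in substance but takes a genuinely different main route from the paper. The paper proves the lemma by a single direct fiberwise computation: the stalk of $\cor_{\Delta^\circ_a}\conv\cor_{\Delta_b}$ at $(x_1,x_3)$ is $\rsect_c$ of the fiber $B^\circ_a(x_1)\cap B_b(x_3)$, which equals the full open ball $B^\circ_a(x_1)$ (hence gives $\cor\,[-n]$) when $d_\BBS(x_1,x_3)\leq b-a$, and is a half-open lens with vanishing $\rsect_c$ otherwise --- exactly the computation you relegate to the ``limiting case'' at the end. Your main route, via $\stD_{-a}\conv\stD_b\simeq\stD_{b-a}$, reduces everything to identifying $\stD_{\pm c}$ with $\cor_{\Delta_c}$ and $\cor_{\Delta^\circ_c}\,[n]$ on all of $[0,\pi/2]$; for $a,b<\pi/2$ you could even skip the iteration by choosing $\alpha_\BBS$ with $\max(a,b)<\alpha_\BBS<\pi/2$ and invoking uniqueness of the bi-thickening, and your iterated extension of the closed part to $b=\pi/2$ is exactly what the paper itself does when it writes $\cor_{\Delta_{\pi/2}}\simeq\cor_{\Delta_{\pi/4}}\conv\cor_{\Delta_{\pi/4}}$ in the proof of Theorem~\ref{th:FS2}. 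Two caveats. First, ``the same argument handles $\stD_{-c}$'' is too quick: Lemma~\ref{le:kAcB}(b) concerns constant sheaves on closed sets with proper projection and contractible fibers, whereas $\cor_{\Delta^\circ_{c_1}}\conv\cor_{\Delta^\circ_{c_2}}$ has open lens fibers and needs the separate fact (proved in the paper only for $c_1+c_2<\alpha_X$, Lemma~\ref{lem:add3b}) that $\rsect_c$ of a relatively compact open geodesically convex $n$-dimensional lens is $\cor\,[-n]$. Second, at the true boundary $a=\pi/2$ the identification $\stD_{-\pi/2}\simeq\cor_I\,[n]$ is essentially equivalent to the $a=b=\pi/2$ instance of the lemma (the paper deduces $\stD_{-\pi/2}\simeq K_I$ \emph{from} the lemma), so there you must fall back on the direct computation, which is the paper's entire proof. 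Net effect: your route gives a cleaner treatment of the interior range and makes the role of the convexity radius transparent, but the boundary case is irreducibly the paper's fiber computation.
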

\begin{proof}
	Consider the diagram
	\eqn
	&&\xymatrix@R=5ex@C=4ex{
		&\BBS\times \BBS\times\BBS\ar[ld]_-{q_{12}}\ar[rd]^-{q_{23}}\ar[d]|-{q_{13}}&\\
		\Delta_a^\circ\subset\BBS\times\BBS&\BBS\times\BBS&\BBS\times\BBS\supset\Delta_b
	}\eneqn
	For $x_1,x_3\in\BBS$, set for short
	\eqn
	&&P^b_{x_3}=\Delta_b\cap(\BBS\times\{x_3\}),\quad I^a_{x_1}=\Delta_a^\circ\cap (\{x_1\}\times\BBS).
	\eneqn
	Denote by $\tw q_{13}$ the restriction of $q_{13}$ to 
	$\Delta_a^\circ\times_\BBS\Delta_b$. Then 
	\eqn
	\opb{\tw q_{13}}(x_1,x_3)=\{x_2\in\BBS;d_\BBS(x_1,x_2)<a,d_\BBS(x_2,x_3)\leq b\}.
	\eneqn
	In other words, $\opb{\tw q_{13}}(x_1,x_3)$ is the intersection of an open ball of radius $a$ and a closed ball of radius $b$ with $a\leq b$. 
	It follows that
	\eqn
	\rsect_c(I^a_{x_1}\times_{\BBS} P^b_{x_3};\cor_{\BBS\times\BBS\times\BBS})&=&
	\begin{cases}
		\cor\,[-n]&\text{ if $d_\BBS(x_1,x_3)\leq b-a$},\\
		0& \text{ otherwise}.
	\end{cases}
	\eneqn
\end{proof}

\begin{theorem}\label{th:FS2}
	The equivalence $\FS^\wedge$ given by {\rm Theorem~\ref{th:FS1}}  induces an isometry 
	\begin{equation*}
		(\Derb(\cor_\BBS),\dist_\BBS)\isoto (\Derb(\cor_{\BBS^*}),\dist_{\BBS^*}).
	\end{equation*}
\end{theorem}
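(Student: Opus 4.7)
The strategy is to identify $\cor_P$ and $K_I$ as specific values of the metric bi-thickening on $\BBS$, and then to invoke the Lipschitz kernel machinery via Corollary~\ref{cor:FctLip}. I first identify $\BBS$ with $\BBS^*$ via the Euclidean inner product on $\R^{n+1}$, so that both spheres carry the canonical round metric. Under this identification one has $P = \Delta_{\pi/2}$ and $I = \Delta^\circ_{\pi/2}$. Combining Proposition~\ref{pro:FS1} with Proposition~\ref{pro:bithick1} produces a metric bi-thickening $\stD = \{\stD_a\}_{a\in\R}$ on $\BBS$, whose restriction to $(-\alpha_\BBS, \alpha_\BBS)$ (for any $\alpha_\BBS < \pi/2$) coincides with the usual closed and open diagonal thickenings. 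The plan is to show that $\cor_P \simeq \stD_{\pi/2}$ and $K_I \simeq \stD_{-\pi/2}$.

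To establish $\stD_{\pi/2} \simeq \cor_P$, I choose $N$ with $\pi/(2N) < \alpha_\BBS$; the monoidal axiom of $\stD$ gives $\stD_{\pi/2} \simeq \cor_{\Delta_{\pi/(2N)}}^{\conv N}$. Closed balls on $\BBS^n$ of radius strictly less than the convexity radius $\pi/2$ are geodesically convex, hence so is every finite intersection of such balls, and each non-empty such intersection is contractible. Iterating Lemma~\ref{lem:add1}(b) therefore yields $\cor_{\Delta_{\pi/(2N)}}^{\conv N} \simeq \cor_{\Delta_{\pi/2}} = \cor_P$. The dual argument, using Lemma~\ref{le:FS1} and Lemma~\ref{lem:add3b}, identifies $\stD_{-\pi/2} \simeq K_I$, the placement of the dualising complex being controlled by the symmetry $\opb{v}$ which leaves $I$ invariant. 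As a bonus, the monoidal isomorphism $\stD_{-\pi/2} \conv \stD_{\pi/2} \simeq \stD_0 \simeq \cor_\Delta$ and its mirror produce $K_I \conv \cor_P \simeq \cor_\Delta$ and $\cor_P \conv K_I \simeq \cor_\Delta$, establishing Theorem~\ref{th:FS1} at the same time.

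Next I show that $\cor_P$ is a $1$-Lipschitz kernel from $\BBS$ to $\BBS^*$. For $0 \leq a \leq \rho$ with $\rho < \alpha_\BBS$, I define $\sigma_a \colon \stDY_a \conv \cor_P \to \cor_P \conv \stDX_a$ as the composition
\[
\stD_a \conv \stD_{\pi/2} \xrightarrow[\phi_2(a,\pi/2)]{\sim} \stD_{a+\pi/2} \xrightarrow[\phi_2(\pi/2,a)^{-1}]{\sim} \stD_{\pi/2} \conv \stD_a
\]
supplied by the monoidal isomorphisms of $\stD$. Diagram~\eqref{eq:sqdiagcom0} then commutes by naturality of $\phi_2$ in each argument (Definition~\ref{def:pshK}(i)), and diagram~\eqref{eq:sqdiagcom1} commutes by the associativity axiom (Definition~\ref{def:pshK}(ii)). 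The same construction applied to $K_I \simeq \stD_{-\pi/2}$ shows that $K_I$ is a $1$-Lipschitz kernel from $\BBS^*$ to $\BBS$.

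Finally, Corollary~\ref{cor:FctLip} will apply with $K = \cor_P$, $L = K_I$ and $\delta = 1$: its hypothesis $\Phi_{L \conv K} \simeq \id_{\Derb(\cor_\BBS)}$ is exactly $K_I \conv \cor_P \simeq \cor_\Delta$, which was established above. This yields
\[
\dist_{\BBS^*}(\FS^\wedge F_1, \FS^\wedge F_2) = \dist_\BBS(F_1, F_2) \quad \text{for all } F_1, F_2 \in \Derb(\cor_\BBS),
\]
the isometry sought. The hardest part will be the identification $\stD_{\pi/2} \simeq \cor_P$: one must propagate the equality $\stD_a = \cor_{\Delta_a}$ from $|a| < \alpha_\BBS$ out to the boundary $a = \pi/2$ of the convexity radius. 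Handling this by iteration rather than by enlarging $\alpha_\BBS$ keeps every intermediate composition strictly inside the convex regime, so that Lemma~\ref{lem:add1}(b) (and hence Lemma~\ref{le:kAcB}) remains applicable at each step; no new technical ingredient beyond geodesic convexity is required.
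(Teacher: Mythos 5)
Your proposal is correct and follows essentially the same route as the paper: identify $P$ and $I$ with $\Delta_{\pi/2}$ and $\Delta^\circ_{\pi/2}$ after identifying $\BBS$ with $\BBS^*$, deduce $\cor_P\simeq\stD_{\pi/2}$ by writing $\cor_{\Delta_{\pi/2}}$ as a composition of kernels of subcritical radius (the paper uses $\pi/4+\pi/4$ where you use $N$ steps of $\pi/(2N)$), identify $K_I\simeq\stD_{-\pi/2}$ as its inverse, and conclude that composition with this invertible member of the bi-thickening family is an isometry. The only difference is that the paper asserts the last step in one line, whereas you make it explicit via the Lipschitz-kernel formalism (Corollary~\ref{cor:FctLip}); that is a legitimate and careful way to justify the paper's ``therefore $\cor_P\conv$ is an isometry''.
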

\begin{proof}[Proof of both {\rm Theorems~\ref{th:FS1}} and~{\rm \ref{th:FS2}} ]
	Let us identify $\BBS^n$ and the dual sphere $\BBS^{n*}$. Then the sets $P$ and $I$ of~\eqref{eq:PI} may be also defined as:
	\eq\label{eq:PI2}
	&&P=\{(x,y)\in\BBS\times\BBS;d_\BBS(x,y)\leq\pi/2\},\quad I=\{(x,y)\in\BBS\times\BBS;d_\BBS(x,y)<\pi/2\}.
	\eneq
	Since $\cor_{\Delta_{\pi/2}}\simeq \cor_{\Delta_{\pi/4}}\conv \cor_{\Delta_{\pi/4}}$ we have 
	$\cor_P\simeq\stD_{\pi/2}$. (It was not possible to deduce directly this result form~\eqref{eq:PI2} since $\alpha_\BBS<\pi/2$.)
	Therefore $\cor_P\conv$  is an isometry  and the inverse of $\cor_P$ is given by 
	$\stD_{-\pi/2}$ which is isomorphic to $K_I$.
\end{proof}

\begin{remark}
	A similar result holds for the Radon transform on real projective spaces. 
\end{remark}

\section{The interleaving distance associated with a Hamiltonian isotopy}\label{sect:symp}

\subsection{General case}\label{subsect:negatisotop}
Let us briefly recall the main result of~\cite{GKS12}~{\S~3}. 
Consider a real $C^\infty$-manifold $X$, its cotangent bundle $\pi_X\cl T^*X\to X$ endowed with the Liouville form $\alpha_X$ and an open interval $I$ of $\R$ containing $0$. Set as above $\dT^*X=T^*X\setminus T^*_XX$,  where $T^*_XX$ is the zero-section, and still denote by $\pi_X\cl\dT^*X\to X$ the projection. When there is no risk of confusion, we may write $\pi$ instead of $\pi_X$. 

Assume to be given  a real $C^\infty$-function $h\cl\dT^*X\times I\to\R$  homogeneous of degree $1$ with respect to the fiber variable. Let $\Phi_h$ denote the flow associated with the  Hamiltonian vector field $H_h$. We assume that $\Phi_h$  is well-defined on the open interval $I\subset\R$. Hence, 
\eq\label{eq:hamiltisot}
&&\Phi_h\cl\dT^*X\times I\to\dT^*X
\eneq
and~\cite{GKS12}*{hypothesis (3.1)} is satisfied, that is, setting $\varphi_{h,t}=\Phi_h(\cdot,t)$,  $\varphi_{h,t}$ is a homogeneous  symplectic isomorphism of $\dT^*X$ for each $t\in I$ and $\varphi_{h,0}=\id_{\dT^*X}$. 
To $\Phi_h$, one associates 
\eqn
&&v_{\Phi_h}=\frac{\partial\Phi_h}{\partial t}\cl \dT^*X\times I\to T\dT^*X.
\eneqn
One recovers $h$ by $h=\langle \alpha_X,v_{\Phi_h}\rangle$.

Denote by $\Lambda_h\subset \dT^*X\times \dT^*X\times T^*I$
the smooth conic Lagrangian manifold associated with $\Phi_h$ (see~\cite{GKS12}*{Lem.~A.2}):
\eq\label{eq:lagrangianLh}
&&\Lambda_h=\{(\Phi_h(x,\xi,t),(x,-\xi),(t,-h(\Phi_h(x,\xi,t),t)));(x,\xi)\in\dT^*X,t\in I\}.
\eneq
The main result  \textcolor{blue}{of} loc.\ cit.\  (see~\cite{GKS12}*{Th.~3.7})  is the existence of an object 
$K^h\in\Derlb(\cor_{X\times X\times I})$  (denoted $K$ therein) characterized by the two properties:
\eq\label{eq:unicity}
&&\SSi(K^h)\subset \Lambda_h\cup T^*_{X\times X\times I}(X\times X\times I)\mbox{ and }K^h\vert_{\{t=0\}}\simeq\cor_\Delta.
\eneq
Now we assume that  
\eq\label{eq:hyphnottd}
\left\{\parbox{70ex}{
	$h$ is not time-depending, homogeneous of degree $1$ with respect to the fiber variable and the hamiltonian flow $\Phi$ 
	is well-defined on $\dT^*X\times \R$.
}\right.
\eneq
Note that since  $h$ is not time-depending,  the hamiltonian flow $\Phi$ 
is well-defined on $\dT^*X\times \R$ as soon as it is  well-defined on $\dT^*X\times I$ for some open interval $I$ containing $0$.

One has
\eq\label{eq:hamil1}
&&\phi_{h,a}\conv\phi_{h,b}=\phi_{h,a+b}.
\eneq
Therefore the object $K^h$ belongs to $\Derlb(\cor_{X\times X\times \R})$.

For $a\in \R$, we set $K^h_a=K^h\vert_{t=a}$.
\begin{lemma}
	Assuming~\eqref{eq:hyphnottd}, we have the isomorphisms 
	\eq\label{eq:hamil2}
	&& K^h_a\conv K^h_b\simeq K^h_{a+b}\mbox{ for }a,b\in \R. 
	\eneq
\end{lemma}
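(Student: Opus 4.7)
The plan is to reduce the identity to a Cauchy-problem uniqueness argument in the spirit of~\cite{GKS12}*{Th.~3.7}, exploiting the two features of~\eqref{eq:hyphnottd}: the semigroup property $\Phi_h(\Phi_h(x,\xi,s),t)=\Phi_h(x,\xi,s+t)$ and conservation of energy $h\circ\Phi_h(\scbul,t)=h$. Working on a doubled time base, I introduce two objects of $\Derlb(\cor_{X\times X\times\R_s\times\R_t})$:
\[
K_1 := \sigma^{-1}K^h,\qquad K_2 := K^h\conv K^h,
\]
where $\sigma\cl X\times X\times\R_s\times\R_t\to X\times X\times\R$ is $(x,x',s,t)\mapsto(x,x',s+t)$, and in $K_2$ the composition is taken in a middle copy of $X$ with the first copy of $K^h$ living on $X_1\times X_2\times\R_s$ and the second on $X_2\times X_3\times\R_t$. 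By construction, $K_1|_{(s_0,t_0)}\simeq K^h_{s_0+t_0}$ and $K_2|_{(s_0,t_0)}\simeq K^h_{s_0}\conv K^h_{t_0}$, so specializing a global isomorphism $K_1\simeq K_2$ to $(s_0,t_0)=(a,b)$ yields the lemma.

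Both objects restrict at $s=0$ to $K^h$ viewed on $X\times X\times\R_t$: for $K_1$ because $\sigma(x,x',0,t)=(x,x',t)$, and for $K_2$ because $K^h_0\simeq\cor_\Delta$ is a unit for $\conv$. For the microsupport, set
\[
\widetilde\Lambda := \bigl\{\bigl(\Phi_h(x,\xi,s+t),\,(x,-\xi),\,(s,-h(x,\xi)),\,(t,-h(x,\xi))\bigr): (x,\xi)\in\dT^*X,\, s,t\in\R\bigr\}.
\]
Inverse image microsupport rules applied to the submersion $\sigma$ give $\SSi(K_1)\subset\widetilde\Lambda\cup T^*_{X\times X\times\R_s\times\R_t}(X\times X\times\R_s\times\R_t)$, the equality of the two $\R$-covectors being forced by $d\sigma$. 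For $K_2$, Proposition~\ref{pro:GuS14} bounds $\SSi(K_2)$ by the microlocal composition in $X_2$ of two copies of $\Lambda_h$; the semigroup property identifies the new base point as $\Phi_h(x,\xi,s+t)$ and conservation of energy forces both $\R$-covectors to equal $-h(x,\xi)$. Hence $\SSi(K_2)\subset\widetilde\Lambda\cup T^*_{X\times X\times\R_s\times\R_t}(X\times X\times\R_s\times\R_t)$ as well.

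Having matched initial data and microsupport, I deduce $K_1\simeq K_2$ via a parametric version of the uniqueness part of~\cite{GKS12}*{Th.~3.7}, treating $t\in\R_t$ as an inert parameter and applying the propagation argument of that reference in the $\R_s$-direction. The main obstacle is the justification of this parametric uniqueness: one has to carry the factor $\R_t$ through the proof of~\cite{GKS12}*{Th.~3.7}, checking that the construction of the comparison morphism at $s=0$, the SS estimate controlling its propagation in $\R_s$, and the microlocal cut-off that upgrades this morphism to an isomorphism all localize over $\R_t$. This is possible precisely because conservation of energy makes $\widetilde\Lambda$ involve no $\R_t$-covector that is not already slaved to the $\R_s$-component. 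Once the parametric uniqueness is established, restricting $K_1\simeq K_2$ to $(s,t)=(a,b)$ completes the proof.
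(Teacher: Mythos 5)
Your reduction is sound in outline and the microsupport computations are correct: since $h$ is time-independent, conservation of energy does force both $\R$-covectors of $\SSi(K^h\conv K^h)$ and of $\SSi(\opb{\sigma}K^h)$ to equal $-h(x,\xi)$, the composition of the two copies of $\Lambda_h$ over the middle $T^*X$ is exactly your $\widetilde\Lambda$, and both objects restrict to $K^h$ on $\{s=0\}$. The problem is exactly where you locate it, and you leave it unresolved: the ``parametric version of the uniqueness part of~\cite{GKS12}*{Th.~3.7}'' is not a formal corollary of the one-parameter statement. Your $\widetilde\Lambda$ is the Lagrangian attached to a two-parameter family $(s,t)\mapsto\Phi_{h,s+t}$, whereas the propagation and cut-off arguments of~\cite{GKS12} are written for an interval of times; a several-parameter version of the quantization and of its uniqueness does exist and would have to be invoked or reproved, but as written your argument asserts the key step rather than establishing it. That is a genuine gap, since this step carries the entire weight of the proof.

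The paper sidesteps the issue by freezing one of your two time variables, so that only the one-parameter uniqueness is ever needed. Fix $a$ and compare the two objects $K^h_a\conv K^h$ and $\oim{T_a}(K^h)$ of $\Derlb(\cor_{X\times X\times\R})$, i.e.\ the families $t\mapsto K^h_a\conv K^h_t$ and $t\mapsto K^h_{a+t}$: by~\eqref{eq:hamil1} the isotopies $\{\Phi_{h,a}\circ\Phi_{h,t}\}_t$ and $\{\Phi_{h,a+t}\}_t$ coincide, so both kernels are micro-supported by the Lagrangian of a single one-parameter isotopy, and they agree at a common value of $t$. The uniqueness statement of~\cite{GKS12}*{Prop.~3.2} then applies verbatim, and restricting at $t=b$ gives~\eqref{eq:hamil2}. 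To salvage your version you must either cite the several-parameter form of the GKS theorem explicitly or actually carry out the localization over $\R_t$ that you only sketch; otherwise the one-parameter reduction is the cheaper and complete route.
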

\begin{proof}
	By~\eqref{eq:hamil1}, the two isotopies $\{\Phi_{h,a} \circ \Phi_{h,t} \}_{t\in \R}$ and $\{\Phi_{h,a+t}\}_{t\in I}$ coincide.  
	Their associated kernels are respectively 
	$K^h_a \circ K^h$ and $\oim{T_a}(K^h)$, where $T_a$ is the translation $(x,x',t) \mapsto (x,x',t+a)$. These two kernels are micro-supported by $\Lambda$ and their restriction at $t=-a$ are isomorphic to $\cor_\Delta$. They are thus  isomorphic by the unicity of kernels satisfying~\eqref{eq:unicity} and restricting to $t=b$, we get~\eqref{eq:hamil2}.
\end{proof}

Now we assume
\eq\label{hyp:positive}
&&\left\{\parbox{75ex}{
	the function $h$ is non-positive.
}\right.\eneq
In the sequel, we denote by $(t;\tau)$ the coordinates on $T^\ast \R$.
Therefore, $\Lambda_h\subset  \dT^*X\times \dT^*X\times T_{\tau\geq0}^*\R$ and it follows from~\cite{GKS12}*{Prop.~4.8} that for 
$a\leq b\in\R$ there are natural morphisms 
\eqn
&&\rho_{a,b}\cl K^h_b\to K^h_a,
\eneqn
satisfying the compatibility conditions of Theorem~\ref{def:pshK}. Therefore we have:
\begin{theorem}\label{th:stKh}
	Assume to be given a real non-positive $C^\infty$-function $h\cl \dT^*X\to\R$  homogeneous of degree $1$ in the fiber variable such that the associated flow $\Phi_h$ is defined on $\dT^*X\times I$ for an open interval $I$ containing $0$.
	Then the family $\{K^h_a\}_{a\in\R}$ defines   a monoidal presheaf  $\stK^h$ on $(\R,+)$ with values in 
	$(\Derb(\cor_{X\times X}),\conv)$.
\end{theorem}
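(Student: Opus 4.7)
My plan is to assemble the three pieces of data required by Definition~\ref{def:pshK} and then verify the compatibility axioms using the uniqueness part of the GKS theorem (property~\eqref{eq:unicity}).

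First I would unpack the structure maps. Take $\phi_0 \cl \cor_\Delta \isoto K^h_0$ to be the isomorphism coming from~\eqref{eq:unicity}, and take $\phi_2(a,b) \cl K^h_a\conv K^h_b \isoto K^h_{a+b}$ to be the composition isomorphism of~\eqref{eq:hamil2}. For the restriction morphisms, the hypothesis $h\leq 0$ together with~\eqref{eq:lagrangianLh} gives $\Lambda_h\subset \dT^*X\times\dT^*X\times\{\tau\geq 0\}$, so for $a\leq b$ the result~\cite{GKS12}*{Prop.~4.8} produces canonical morphisms $\rho_{a,b}\cl K^h_b\to K^h_a$ with $\rho_{a,b}\circ\rho_{b,c}=\rho_{a,c}$ and $\rho_{a,a}=\id$; that is, the data of a presheaf on $\R_{\leq}$.

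Next I would verify the three diagrams in Definition~\ref{def:pshK}. Axiom~(iii), the unit condition, is immediate: by~\eqref{eq:unicity} and~\eqref{eq:hamil2} both composites  $\cor_\Delta\conv K^h_a\to K^h_a$ arising from $\phi_0\otimes\id$ followed by $\phi_2(0,a)$ (resp.\ on the right) coincide with the canonical unit isomorphism $\lun_{K^h_a}$ (resp.\ $\run_{K^h_a}$), since both are morphisms from $K^h_a$ to itself whose value at $t=a$ restricts to the identity of $\cor_\Delta\conv K^h_a \simeq K^h_a$, and such a morphism is unique by~\eqref{eq:unicity} applied to the $(a\mapsto a-a)$-shifted family. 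Axioms~(i) and~(ii) reduce to a single uniqueness argument: consider the sum map $s\cl\R\times\R\to\R$ and the ``parametrised composition'' $K^h\boxdot K^h$ on $X\times X\times(\R\times\R)$ obtained by convolving the two pullbacks of $K^h$ in the middle $X$-factor. This kernel has microsupport in $(\Lambda_h\times\Lambda_h)$ pushed forward through composition, which lies in the Lagrangian attached to the combined flow $\Phi_h(\cdot,a)\circ\Phi_h(\cdot,b)=\Phi_h(\cdot,a+b)$, and its restriction to $\{a+b=0\}$ is $\cor_\Delta$. By the uniqueness in~\eqref{eq:unicity} applied over the base $\R\times\R$, there is a canonical isomorphism $K^h\boxdot K^h\simeq\opb{s}K^h$, whose fibrewise value is exactly $\phi_2(a,b)$. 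Axiom~(i) is then functoriality of $\opb{s}$ applied to $\rho$, and the triple-convolution version (with $\R^3\to\R$) gives axiom~(ii) as the associativity of addition.

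The main obstacle will be the microsupport bookkeeping in the previous paragraph: one has to justify that $K^h\boxdot K^h$ really does satisfy the characterising conditions of~\eqref{eq:unicity} over the parameter space $\R\times\R$ (so that~\cite{GKS12}*{Th.~3.7} can be invoked to conclude uniqueness), and similarly for the triple convolution. This amounts to checking a non-characteristic composition of microsupports in the sense of Proposition~\ref{pro:GuS14} and identifying the resulting Lagrangian with the one attached to the composite isotopy; once this is done, all compatibility diagrams collapse to tautologies on $\R^k$. Finally I would note that no properness issue arises because we work in $\Derlb$ and the restriction $K^h_a$ lives in $\Derb(\cor_{X\times X})$ for each fixed $a$, so the resulting monoidal presheaf does take values in $\Derb(\cor_{X\times X})$ as claimed.
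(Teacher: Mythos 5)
Your proposal is correct and is built from the same ingredients as the paper's (very terse) argument --- $\phi_0$ from~\eqref{eq:unicity}, $\phi_2(a,b)$ from~\eqref{eq:hamil2}, and the restriction morphisms from the non-positivity of $h$ via \cite{GKS12}*{Prop.~4.8} --- but your route to the composition isomorphism and to the coherence axioms is genuinely different. The paper proves~\eqref{eq:hamil2} by a one-parameter argument: for fixed $a$ it compares the isotopies $\{\Phi_{h,a}\circ\Phi_{h,t}\}_{t}$ and $\{\Phi_{h,a+t}\}_{t}$, whose kernels $K^h_a\conv K^h$ and $\oim{T_a}K^h$ are both micro-supported by $\Lambda_h$ and restrict to $\cor_\Delta$ at $t=-a$, hence are isomorphic by the uniqueness in~\eqref{eq:unicity}; the compatibility diagrams of Definition~\ref{def:pshK} are then simply asserted. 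You instead work over the parameter spaces $\R^2$ and $\R^3$ and invoke uniqueness there. This is a cleaner way to make axioms (i)--(iii) genuinely automatic, but note that~\eqref{eq:unicity} as quoted (and \cite{GKS12}*{Th.~3.7}) is a statement about one-parameter isotopies; to run your argument you need the multi-parameter extension of the quantization/uniqueness theorem (\cite{GKS12}*{Rem.~3.9}), together with the microsupport and properness bookkeeping for the fibrewise composition $K^h\boxdot K^h$ that you rightly single out as the main obstacle. The paper's translation trick buys a shorter proof of~\eqref{eq:hamil2} staying entirely within the one-parameter theorem; your parametrised version buys a uniform treatment of all the compatibility diagrams, including axiom (i) relating $\phi_2$ to the $\rho_{a,b}$, which the paper does not actually verify. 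One small caveat: your closing remark that ``no properness issue arises because we work in $\Derlb$'' is not the right reason --- properness of $q_{13}$ on the supports must still be checked to form the compositions of the GKS kernels; it holds here because of the support estimates for $K^h$, exactly as in the paper's proof of~\eqref{eq:hamil2}.
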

(Recall that for a monoidal presheaf $\stK$ on $(\R,+)$ one sets $\stK_a\eqdot\stK(a)$.)
\begin{remark}
	One shall not confuse the monoidal presheaf $\stK^h$, a presheaf on the monoidal ordered set $(\R_\geq,+)$ with values in  $\Derb(\cor_{X\times X})$ and $K^h$, an object of
	$ \Derlb(\cor_{X\times X\times\R})$. The object $K^h$ is explicitly calculated in~\cite{GKS12}*{Exa.~3.10, 3.11} for the cases of the Euclidean space
	and  the Euclidian sphere. 
\end{remark}

\begin{definition}\label{def:disth}
	We denote by  $\dist_h$  the pseudo-distance on $\Derb(\cor_X)$
	associated with  the  monoidal presheaf  $\stK^h$ \lp see Definition~\ref{def:convdist}\rp.
\end{definition}

\begin{remark}
	The notion of non-positive isotopy is due to~\cite{EKP06}. Let us also mention that several distances naturally appear in symplectic topology (see for example the recent paper~\cite{RZ20}). As far as we know, the pseudo-distance
	$\dist_h$  on sheaves on $X$  is new. 
\end{remark}

\subsection{The case of Riemannian manifolds}\label{sect:Riemann}

In this Section, we shall use some classical results of  Riemannian geometry, referring to~\cites{DC92, Chav06}. 

Consider a Riemannian manifold $(X,g)$ of class $C^\infty$ and denote by $d_X$ its associated distance. We assume

\eq\label{hyp:riemann}
&&\parbox{70ex}{
	$(X,g)$ is complete and has  a strictly positive convexity radius $r_{\rconv}$, hence  strictly positive injectivity radius $r_\iinj$. 
}  
\eneq
Recall that  $r_{\rconv} \leq \frac{r_\iinj}{2}$ (see \cite{Berger76}).  
\eq
&&\parbox{70ex}{
	For $(X,g)$  satisfying~\eqref{hyp:riemann}, we choose $0<\alpha_X<r_{\rconv}$.
}.\label{hyp:riemann3}
\eneq
Note that a compact Riemannian manifold satisfies hypothesis~\eqref{hyp:riemann}.

Consider the cotangent bundle $T^*X$ and its zero-section $T^*_XX$. The isomorphism $TX\isoto T^*X$ endows $T^*X$ with a metric and we denote by $\vvert\xi\vvert_x$ the norm of the vector $\xi\in T^*_xX$. 

For the reader's convenience, we recall some of the notations~\eqref{eq:BDG} and introduce some new ones:

\eq\label{not:openball}
&&\left\{ \ba{l}B_a(x_0)=\{x\in X;d_X(x_0,x)\leq a\},\\
B^\circ_a(x_0)=\{x\in X;d_X(x_0,x)< a\},\\
\Delta_a=\{(x_1,x_2)\in X\times X;d_X(x_1,x_2)\leq a\},\\
\Delta^\circ_a=\{(x_1,x_2)\in X\times X;d_X(x_1,x_2)<a\},\\
S_a(x_0)=\{x\in X; d_X(x_0,x)=a\},\\
B^*_X(r)=\{(x;\xi)\in T^*X;\vvert\xi\vvert_x<r\}, \\
S^*_X(r)=\{(x;\xi)\in T^*X;\vvert\xi\vvert_x=r\}.
\ea\right.
\eneq

We  also introduce the sets: 
\eq\label{eq:defZ}
&&\left\{ \ba{l}
I=]-r_{\iinj}, r_{\iinj}[,\quad I^+=]0,r_{\iinj}[,\quad I^-=\mathopen]-r_{\iinj}, 0[,\\
J=X\times X\times I,\quad J^\pm=X\times X\times I^\pm,\\
Z=\{ (x,y,t) \in J; d_X(x,y)\leq t<r_{\iinj}\},\\
\Omega^+= \{ (x,y,t) \in J;d_X(x,y)< t\},\\
\Omega^-= \{ (x,y,t) \in J; d_X(x,y)< -t\},\\
A=\{ ((x;\xi),t)\in T^*X\times I;\vvert\xi\vvert_x\leq t<r_{\iinj}\}
\ea\right.
\eneq
Let us recall the construction of the exponential map.
Consider the function
\eq\label{eq:hvertxi00}
&&f\cl  T^*X\to\R,\quad f(x,\xi)=-\frac{1}{2}\vvert\xi\vvert^2_x.
\eneq
Denote by  $X_f$ 
the Hamiltonian vector fields  of  $f$ and by  $\Phi_f$ the flows  associated to this vector fields.
In the literature (see {\em e.g.,}~\cite{MDS10}*{Exa.~1.1.23}, \cite{Pat99}*{p.~15}),  the flow $\Phi_f$ is known (via the isomorphism $TX\simeq T^*X$) as {\em the geodesic flow} of the Riemannian manifold $(X,g)$.

The exponential map  $e_f$, given by
\eqn
&& e_f(x,\xi,t)=\pi_X\circ\Phi_f(x,\xi,t),
\eneqn  
is well-defined  for $t\in\R$. 
The well-known theorem (see loc.\ cit.) which asserts that the geodesic flow 
coincides with the hamiltonian flow of the function $f$ may be translated as follows.

\begin{lemma}\label{le:geoflo}
	The map 
	\eq\label{eq:Ef} 
	&&E_f\cl T^*X\times I\to J=X\times X\times I,\quad E_f(x,\xi,t)=(e_f(x,\xi,1),x,t)
	\eneq
	is well-defined and  induces $C^\infty$-isomorphisms 
	\eqn
	&&B^*_X(r)\times\{t\}\simeq\Delta^\circ_r\times\{t\}\mbox{ for $r<r_{\iinj}$ and all $t$.}
	\eneqn
\end{lemma}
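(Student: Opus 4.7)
The plan is to reduce the statement to the classical fact from Riemannian geometry that the exponential map $\exp_x\cl T_xX\to X$ restricts to a $C^\infty$-diffeomorphism from the open ball $\{v\in T_xX; \vvert v\vvert<r\}$ onto the open geodesic ball $B^\circ_r(x)$ for every $r\leq r_\iinj$ and every $x\in X$ (see \cite{DC92}). Via the musical isomorphism $T^*X\simeq TX$ induced by $g$, the Hamiltonian flow $\Phi_f$ of $f(x,\xi)=-\tfrac{1}{2}\vvert\xi\vvert^2_x$ corresponds (up to sign/time reversal, absorbed by the symmetric interval $I$) to the geodesic flow on $TX$, as recalled just before the lemma. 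Hence $\xi\mapsto e_f(x,\xi,1)$ is, under this identification, the Riemannian exponential map at $x$.

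First, I would establish global well-definedness of $E_f$. Completeness of $(X,g)$ together with Hopf--Rinow guarantees that the geodesic flow on $TX$ is defined for all time; transporting this back via $TX\simeq T^*X$, the flow $\Phi_f$ is defined on $T^*X\times \R$, so in particular on $T^*X\times I$. The map $E_f$ is then a composition of smooth maps, hence smooth.

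Next, fix $t\in I$. Since $E_f(x,\xi,t)=(e_f(x,\xi,1),x,t)$ leaves the second and third factors untouched, it suffices to show that for each $x\in X$ the map $\xi\mapsto e_f(x,\xi,1)$ is a $C^\infty$-diffeomorphism from $\{\xi\in T^*_xX; \vvert\xi\vvert_x<r\}$ onto $B^\circ_r(x)$. This is precisely the classical statement above applied at $x$, using $r<r_\iinj\leq r_\iinj(X)$ from hypothesis~\eqref{hyp:riemann}. In particular, for any $(y,x,t)\in \Delta^\circ_r\times\{t\}$, we have $d_X(y,x)<r<r_\iinj$, so there is a unique $\xi\in T^*_xX$ with $\vvert\xi\vvert_x=d_X(y,x)<r$ satisfying $e_f(x,\xi,1)=y$; this yields the inverse. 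Smoothness of the inverse is the inverse function theorem applied at each point of $B^*_X(r)\times\{t\}$, using that the differential of $\exp_x$ at the origin is the identity and remains invertible throughout the ball of radius $r_\iinj$.

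The only subtlety to verify is the sign/normalization convention linking the Hamiltonian flow of $-\tfrac{1}{2}\vvert\xi\vvert^2$ to the geodesic flow. Since $I$ is a symmetric interval around $0$ and the assertion concerns $t=1$ (rescaled by homogeneity to any norm bound), reversing time merely exchanges $\xi$ with $-\xi$, which preserves both $\vvert\xi\vvert_x$ and the geodesic endpoint description of $\Delta^\circ_r$. So there is no substantive obstacle; the lemma is really a repackaging of the exponential-map diffeomorphism theorem in the notation of~\eqref{eq:defZ}.
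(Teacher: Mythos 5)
Your proof is correct and follows exactly the route the paper intends: the paper states this lemma without proof, presenting it as a translation of the classical facts that the Hamiltonian flow of $f$ is (up to the sign you address) the geodesic flow and that $\exp_x$ is a diffeomorphism onto geodesic balls of radius $r<r_{\iinj}$, which is precisely the argument you flesh out. The only point worth stating slightly more carefully is that invertibility of the full differential of $E_f$ follows from that of $d(\exp_x)$ by the block-triangular structure coming from the second component being the projection onto $x$; otherwise there is nothing to add.
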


The proof of the next lemma is due to St{\'e}phane Guillermou. It is much  simpler than an earlier proof of ours.

\begin{lemma}\label{le:gu}
	Let $(X,g)$ be a Riemannian manifold satisfying~\eqref{hyp:riemann} and let $\alpha_X$ be as in~\eqref{hyp:riemann3}. 
	Let $x$ and $y$ in $X$ with $x\neq y$ and set $Z_a(x,y)=B^\circ_a(x)\cap B_a(y)$.  Then $\rsect(X;\cor_{Z_a(x,y)})\simeq0$. In other words, \eqref{hyp:dist2}(c) is satisfied.
\end{lemma}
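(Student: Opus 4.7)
The plan is to reduce the vanishing $\rsect(X;\cor_{Z_a(x,y)})\simeq 0$ to the contractibility of two closed subsets of $X$. Set $\overline{Z}\eqdot B_a(x)\cap B_a(y)$ and $C\eqdot S_a(x)\cap B_a(y)$. If $d_X(x,y)\geq 2a$, then $Z_a(x,y)=\varnothing$ by the triangle inequality and the claim is trivial, so I assume $0<d_X(x,y)<2a$. Since $Z_a(x,y)$ is open in $\overline{Z}$ with closed complement $C=\overline{Z}\setminus Z_a(x,y)$, and both $\overline{Z}$ and $C$ are closed in $X$, I obtain on $X$ the distinguished triangle
\[
\cor_{Z_a(x,y)}\to\cor_{\overline{Z}}\to\cor_{C}\xrightarrow{+1}.
\]
Applying $\rsect(X;-)$ and using the isomorphisms $\rsect(X;\cor_{\overline{Z}})\simeq\rsect(\overline{Z};\cor)$ and $\rsect(X;\cor_C)\simeq\rsect(C;\cor)$ that come from $\overline{Z}$ and $C$ being closed, it suffices to show that $\overline{Z}$ and $C$ are nonempty and contractible: the restriction map $\cor\simeq\rsect(\overline{Z};\cor)\to\rsect(C;\cor)\simeq\cor$ is then the identity, so its fiber $\rsect(X;\cor_{Z_a(x,y)})$ vanishes. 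Nonemptyness is easy: both sets contain the point $\exp_x(av_0)$, where $v_0\eqdot\exp_x^{-1}(y)/d_X(x,y)$.

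The contractibility of $\overline{Z}$ is immediate from the convexity hypothesis. Since $a\leq\alpha_X<r_{\rconv}$, the balls $B_a(x)$ and $B_a(y)$ are strongly geodesically convex, and their intersection inherits strong convexity: for $p,q\in\overline{Z}$, the unique minimizing geodesic joining them lies in each ball and hence in $\overline{Z}$. Any such nonempty set contracts to any of its points via the geodesic retraction.

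The contractibility of $C$ is the main point. I parametrize $S_a(x)$ by the unit sphere $UT_xX\subset T_xX$ via $v\mapsto\exp_x(av)$, which is a diffeomorphism since $a<r_\iinj$; under this identification, $C$ corresponds to the sublevel set $\{\phi\leq a\}$ of $\phi(v)\eqdot d_X(y,\exp_x(av))$. On this sublevel set $\phi\leq a<r_\iinj$, so $\exp_x(av)$ remains in the normal neighborhood of $y$ and $\phi$ is smooth. A critical point of $\phi$ is one where the outward unit normal $\nabla d_X(x,\cdot)|_p$ of $S_a(x)$ at $p\eqdot\exp_x(av)$ is colinear with $\nabla d_X(y,\cdot)|_p$; since both are the unit final velocities at $p$ of the unique minimizing geodesics from $x$ and from $y$ to $p$, this forces $x$, $y$, $p$ to lie on a single geodesic. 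The bound $d_X(x,y)<2a<2r_\rconv\leq r_\iinj$ makes the geodesic through $x$ and $y$ unique, leaving only $p=\exp_x(\pm a v_0)$. The corresponding critical values are $\phi(v_0)=|a-d_X(x,y)|<a$ and $\phi(-v_0)=a+d_X(x,y)>a$, so within $\phi^{-1}([0,a])$ only $v_0$ is critical, $a$ is a regular value, and the downward gradient flow of $\phi$ retracts $\phi^{-1}([0,a])$ onto $\{v_0\}$. The main technical obstacle is precisely this Morse-theoretic analysis; it relies on the injectivity and convexity bounds to secure both the smoothness of $\phi$ on the relevant sublevel set and the uniqueness of the minimizing geodesics used to pin down the critical locus.
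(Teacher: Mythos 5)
Your proof is correct in outline but follows a genuinely different route from the paper's. The paper (following Guillermou) splits $Z_a(x,y)$ itself into two locally closed pieces, $Z'=\{d_X(x,\cdot)<d_X(y,\cdot)\}\cap B_a(y)$ and $Z''=B^\circ_a(x)\cap\{d_X(x,\cdot)\geq d_X(y,\cdot)\}$, and kills the cohomology of each piece by fibering it over $S_a(y)$ (resp.\ $S_a(x)$) along radial geodesics: each fiber meets the piece in a single half-open interval, so the compactly supported pushforward vanishes. No contractibility statement is needed and no Morse theory appears. You instead compare $Z_a(x,y)$ with its closure $\overline Z=B_a(x)\cap B_a(y)$ via the triangle $\cor_{Z_a(x,y)}\to\cor_{\overline Z}\to\cor_C\xrightarrow{+1}$ with $C=S_a(x)\cap B_a(y)$, reducing to the contractibility of the closed lens and of the cap. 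The first is the geodesic convexity the paper invokes elsewhere (part (A) of the proof of Theorem~\ref{th:riemann1}); the second is your Morse analysis of $\phi(v)=d_X(y,\exp_x(av))$ on the unit sphere of $T_xX$. Your route proves more (the cap is contractible), at the price of exactly the Morse-theoretic overhead that the paper's fibration trick avoids.

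Two points in the Morse step need repair. First, $\phi$ is \emph{not} smooth on all of $\{\phi\leq a\}$ when $d_X(x,y)=a$: in that case $y\in S_a(x)$, the minimum of $\phi$ is $0$, and $d_X(y,\cdot)$ has a conical singularity at $y$. Replace $\phi$ by $\phi^2$, which is smooth on the whole normal neighbourhood of $y$, has the same sublevel sets and the same critical points away from $\phi=0$, and has a nondegenerate minimum at $y$. Second, to retract $\{\phi\leq a\}$ onto $\{v_0\}$ you need more than uniqueness of the critical point: you must know the minimum is nondegenerate (or that small sublevel sets around it are contractible), which holds within the convexity radius but is asserted rather than checked. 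Relatedly, excluding $-v_0$ by computing $\phi(-v_0)=a+d_X(x,y)$ tacitly assumes the concatenation of the geodesic from $y$ to $x$ with the one from $x$ to $\exp_x(-av_0)$ is minimizing; it is cleaner to note that your collinearity analysis, applied to a critical point $p$ already constrained by $d_X(y,p)\leq a$ and $d_X(x,y)<2a<r_\iinj$, yields $p=\exp_x(+av_0)$ in both the parallel and the antiparallel case, so $-v_0$ never enters.
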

\begin{proof}
	(i) We may assume
	\eq\label{eq:gu1}
	&&\left\{\parbox{70ex}{
		for any $x_1,x_2$ in $W$ with $x_1\neq x_2$, there exists a unique geodesic $l(x_1,x_2)\subset W$ with $x_1,x_2\in l(x_1,x_2)$,\\
		for $x_1,x_2,x_3$ in $W$, if $d(x_1,x_3)=d(x_1,x_2)+d(x_2,x_3)$ then $x_2\in l(x_1,x_3)$. 
	}\right.\eneq
	Let us introduce some notations:
	\eqn
	&&Z_a=Z_a(x,y),\\
	&&M=\{z; d(x,z) = d(y,z) \},\\
	&& M_x = \{z; d(x,z) < d(y,z) \},\quad M_y = \{z; d(x,z) > d(y,z) \},\\
	&&Z' = M_x \cap B_a(y),\quad Z'' = B^\circ_a(x) \cap \ol M_y.
	\eneqn
	Note that $Z_a=Z'\sqcup Z''$, $Z'$ is open in $Z_a$ and $Z''$ is closed in $Z_a$.
	
	\spa
	(ii) It follows from~\eqref{eq:gu1} that
	\eq\label{eq:gu2}
	&&\left\{\parbox{70ex}{
		for any geodesic $l(x,z)$, $l(x,z)\cap M$ has at most one point, and similarly with $l(y,z)$.
	}\right.\eneq
	Indeed, let $z_1,z_2\in l(x,z)\cap M$. Then $d(x,z_1)=d(x,z_2)+d(z_2,z_1)$ or 
	$d(x,z_2)=d(x,z_1)+d(z_1,z_2)$  or $d(z_1,z_2)=d(z_1,x)+d(x,z_2)$. Assume for example the first equality. Since  $z_1,z_2\in M$, we get 
	$d(y,z_1)=d(y,z_2)+d(z_2,z_1)$ which implies that the geodesic 
	$(y,z_1)$ contains $z_2$. Since there is at most one geodesic containing both $z_1$ and $z_2$, we find that 
	$y\in l(x,z)$ which implies $z_1=z_2$. 
	
	\spa
	(iii) Let us prove that 	$\rsect(X; \cor_{Z'}) \simeq 0$.
	Let   $p \cl B_a(y)\setminus\{y\}\to S_a(y)$ be the map which
	sends $z\in B_a(y)\setminus\{y\}$  to $p(z)\in l(y,z)\cap S_a(y)$. It follows from~\eqref{eq:gu2} 
	that the fibers of  $p$ intersect $Z'$ along
	a unique interval and this interval is half-open. 
	Since  $y\notin\ol Z'$, we have 
	$\rsect(X; \cor_{Z'})  \simeq\rsect(B_a(y); \cor_{Z'})  \simeq\rsect(B_a(y)\setminus\{y\}; \cor_{Z'})$. Moreover, $\rsect(B_a(y)\setminus\{y\}; \cor_{Z'})\simeq  \rsect(S_a(y);\reim{p}\cor_{Z'})\simeq0$.
	
	\spa
	(iv) Let us prove that $\rsect(X; \cor_{Z''}) \simeq 0$. 	
	Let $q\cl B_a(x)\setminus\{x\}\to S_a(x)$ be the map which
	sends $z\in B_a(x)\setminus\{y\}$  to $p(z)\in l(x,z)\cap S_a(x)$. It follows from~\eqref{eq:gu2} 
	that the fibers of  $q$ intersect $Z''$ along a unique interval and this interval is half-open. 
	
	Since  $x\notin\ol Z''$, we have 
	$\rsect(X; \cor_{Z''})  \simeq\rsect(B_a(x); \cor_{Z''})  \simeq\rsect(B_a(x)\setminus\{x\}; \cor_{Z''})$. Moreover, $\rsect(B_a(x)\setminus\{x\}; \cor_{Z''})\simeq  \rsect(S_a(x);\reim{q}\cor_{Z''})\simeq0$.
	
	\spa
	(v) The result then follows from the distinguished triangle $\cor_{Z'}\to\cor_{Z_a}\to\cor_{Z''}\to[+1]$.
\end{proof}

\begin{theorem}\label{th:riemann1}
	Let $(X,g)$ be a real Riemannian manifold satisfying~\eqref{hyp:riemann} and let $\alpha_X$ be as in~\eqref{hyp:riemann3}.  Then hypotheses~\eqref{hyp:dist1},~\eqref{hyp:dist2} and~\eqref{hyp:dist4} are satisfied. 
\end{theorem}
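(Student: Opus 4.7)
The plan is to verify the three hypotheses in turn, drawing on the classical properties of Riemannian manifolds with positive convexity radius and on the technical lemmas already established in the paper (in particular Lemma~\ref{le:gu} and Lemma~\ref{lem:add5}).

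For hypothesis~\eqref{hyp:dist1}, the choice $\alpha_X < r_{\rconv}$ is crucial. For (i): since $a,b$ satisfy $a+b \leq \alpha_X < r_{\rconv}$, each closed ball $B_a(x_1)$ and $B_b(x_2)$ is geodesically strongly convex; their intersection, if non-empty, is contained in a single convex ball and is geodesically convex as an intersection of two such sets, hence star-shaped with respect to any of its points and therefore contractible. For (ii): $(X,g)$ being complete, Hopf--Rinow implies $B_a(x)$ is compact for every $x$ and $a \geq 0$, so both projections $q_1,q_2 \colon \Delta_a \to X$ have compact fibers and are proper. For (iii): the inclusion $\Delta_a \conv \Delta_b \subset \Delta_{a+b}$ is the triangle inequality; conversely, for $(x_1,x_3) \in \Delta_{a+b}$, I use that $d_X(x_1,x_3) \leq \alpha_X < r_\iinj$ to find a unique minimizing geodesic from $x_1$ to $x_3$, then take $x_2$ at arclength $\min(a,d_X(x_1,x_3))$ along it; this gives $d_X(x_1,x_2) \leq a$ and $d_X(x_2,x_3) \leq b$.

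For hypothesis~\eqref{hyp:dist2}: parts (a) and (b) are handled by a straightening argument. Since $\alpha_X < r_{\rconv} < r_\iinj$, the distance function $d_X$ is of class $C^\infty$ on the open set $\{(x,y) \in X \times X : 0 < d_X(x,y) < r_\iinj\}$, and its partial differentials $d_x d_X$ and $d_y d_X$ are non-vanishing there (they equal, up to sign, the unit tangent covectors to the minimizing geodesic). Consequently, near any boundary point $(x_0,y_0)$ with $d_X(x_0,y_0) = a$, the implicit function theorem provides a $C^\infty$-chart in which $\Delta_a^\circ$ becomes an open half-space, which is convex. Interior and exterior points are trivial since locally $\Delta_a^\circ$ is either everything or empty. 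The same reasoning applied to the function $(x,y,t) \mapsto d_X(x,y) - t$ on $X \times X \times {]-\infty,\alpha_X[}$ proves that $\Omega^+$ is l.t.c. Part (c) is exactly the content of Lemma~\ref{le:gu} already proved in the excerpt.

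For hypothesis~\eqref{hyp:dist4}, I invoke Lemma~\ref{lem:add5}: on $\Delta_a^\circ \setminus \Delta$ with $a \leq \alpha_X < r_\iinj$, the distance function $d_X$ is $C^\infty$ (in particular $C^1$), and both partial differentials $d_x d_X$ and $d_y d_X$ are non-zero, being unit covectors along the minimizing geodesic between the two points. Lemma~\ref{lem:add5} then immediately gives the microlocal condition.

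The main obstacle is the joint verification of~\eqref{hyp:dist2}(a)--(b), which requires smoothness and submersivity of the distance function globally on $\Delta_{\alpha_X}^\circ \setminus \Delta$; this ultimately rests on the fact that strong geodesic convexity forces uniqueness and smooth dependence of the minimizing geodesic, so the Gauss lemma gives the desired non-vanishing of $d d_X$. Once these regularity facts are marshalled, all the remaining verifications are either immediate geometric consequences of the convexity radius assumption or direct applications of lemmas already proved.
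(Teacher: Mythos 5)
Your verifications of~\eqref{hyp:dist1}, of~\eqref{hyp:dist2}(a) and (c), and of~\eqref{hyp:dist4} are correct and essentially coincide with the paper's proof (geodesic convexity of small balls, Hopf--Rinow, subdivision of a minimal geodesic, Lemma~\ref{le:gu}, and Lemma~\ref{lem:add5} via the Gauss lemma). However, your argument for~\eqref{hyp:dist2}(b) has a genuine gap. The condition l.t.c.\ must be checked at \emph{every} point of the ambient manifold $X\times X\times\mathopen]-\infty,\alpha_X[$, in particular at the points of $\ol{\Omega^+}\setminus\Omega^+$ lying on $\Delta\times\{0\}$: since $d_X(x,x)=0<t$ for $t>0$, every point $(x_0,x_0,0)$ is adherent to $\Omega^+$. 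At such a point the function $(x,y,t)\mapsto d_X(x,y)-t$ is \emph{not} differentiable --- $d_X$ behaves like a norm near the diagonal, so its level set through $(x_0,x_0,0)$ is a cone with a vertex, not a smooth hypersurface --- and the implicit function theorem straightening you invoke simply does not apply there. This vertex locus is precisely the delicate point of the statement; your argument covers only the smooth part of $\partial\Omega^+$ (where $t>0$ and $x\neq y$), which is the easy part.

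The paper closes this gap as follows: using the exponential map (Lemma~\ref{le:geoflo}), the set $\Omega^+$ is, in a neighborhood of $\Delta\times\{0\}$ and locally on $X\times X\times\R$, $C^\infty$-isomorphic to $\{(x,\xi,t);\vvert\xi\vvert_x<t\}$, and then the Morse lemma with parameters (\cite{Ho85}*{Lem.~C.6.1}) is used to replace the $x$-dependent metric $\vvert\cdot\vvert_x$ by the constant Euclidian one, so that in a suitable chart the set becomes the standard convex cone $\{\vvert\xi\vvert<t\}$. Some argument of this kind (or a direct construction of a local homeomorphism taking the metric cone to a convex one) is indispensable; to complete your proof you must supply it at the points of $\Delta\times\{0\}$.
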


\begin{proof}
	(A) Let us prove~\eqref{hyp:dist1}.
	
	\spaa
	(a)--(i) Let $x_1$ and $x_2$ in $X$. Since $a,b \leq \alpha_X < r_\rconv$, the ball $B_a(x_1)$ and $B_a(x_2)$ are geodesically convex. Hence, their intersection is either empty or also geodesically convex and geodesically convex sets are contractible.
	
	\spa
	(a)--(ii) The closed and bounded subsets are compact by the  Hopf--Rinow Theorem. Therefore,  condition (ii) is satisfied.
	
	\spa
	(a)--(iii) Let us prove that for $(x_1,x_3) \in \Delta_{a+b}$, there exists $x_2 \in X$ such that $d_X(x_1,x_2)\leq a$ and  $d_X(x_2,x_3)\leq b$. Without loss of generality we can assume that $d_X(x_1,x_3)=a+b$.
	Since $X$ is complete, it follows from the Hopf--Rinow Theorem that $x_1$ and $x_3$ can be joined by a minimal geodesic $\gamma \colon [0,1] \to X$. 
	Then $d(x_1,\gamma(t))$ will take all values between $0$ and $a+b$. Let $t_2\in[0,1]$ such that $d(x_1,\gamma(t_2))=a$. Since $\gamma$ is also minimal on every subinterval of $[0,1]$ it is minimal on $[t_2,1]$. Then, $d_X(x_2,x_3)=b$.
	
	\spaa
	(B) Let us prove~\eqref{hyp:dist2}(b).
	The set $\Omega^+$ is, in a neighborhood of $\Delta\times\{0\}$ and locally in $X\times X\times \R$, $C^\infty$-isomorphic to
	the open set $\{(x,\xi,t); ||\xi||_x < t\}$.
	By the Morse lemma with parameters (see~\cite{Ho85}*{Lem.~C.6.1 and its proof}) this last set is locally topologically convex since, in a local chart, it is isomorphic to a constant cone $\{((x;\xi),t);\vvert \xi\vvert<t\}$ associated with the standart Euclidian metric. 
	
	\spaa
	(C) Let us prove~\eqref{hyp:dist2}(a).
	By Lemma~\ref{le:geoflo}, we are reduced to prove the result after replacing $\Delta_a$ with $B_X^*(a)$ in which case the proof is similar to (B).
	
	\spaa
	(D) The hypothesis~\eqref{hyp:dist2}(c) is satisfied thanks to  Lemma~\ref{le:gu}.
	
	\spaa
	(E)  The hypothesis~\eqref{hyp:dist4} follows from Lemma~\ref{lem:add5}. Indeed, 
	the distance function $f\eqdot d_X\cl X\times X\to\R$ is of class $C^\infty$ on $W\eqdot \Delta^\circ_a\setminus\Delta$ for $a\leq\alpha_X$ and 
	we are reduced to check that for any given $y\in X$, the differential of the function $x\mapsto g(x)=d_X(y,x)$ does not vanish for $0<d_X(x,y)<\alpha_X$. By composing with  the exponential map, we are reduced to prove the same result on $T^*_yX$ in which case it is clear.
\end{proof}

\begin{notation}\label{not:stkda}
	We shall denote by  $a\mapsto\stKd_a$, $a\in\R$ the  bithickening of the diagonal given by Theorems~\ref{th:riemann1}
	and Proposition~\ref{pro:bithick1}.
\end{notation}

\subsection{Comparison of the two kernels on Riemannian manifolds}\label{subsect:hamiltonian}

In this subsection,
$(X,g)$ denotes a Riemannian manifold  with associated distance $d_X$. We shall always assume~\eqref{hyp:riemann}.

Recall the function $f$ and the flow $\Phi_f$ defined in~\eqref{eq:hvertxi00}, and consider the function
\eq\label{eq:hvertxi}
&&h\cl \dT^*X\to\R,\quad h(x,\xi)=-\vvert\xi\vvert_x.
\eneq
Denote by $X_h$ 
the Hamiltonian vector fields  of $h$  and by  $\Phi_h$  the flow  associated to this vector fields.
Since $h$ is homogeneous of degree $1$ in $\xi$ and  $f$ is homogeneous of degree $2$ in $\xi$, we have for  $\lambda >0$

\eq\label{eq:phihomog}
&&\left\{\ba{l}
\Phi_h(x,t;\lambda \xi) = \lambda \cdot \Phi_h(x, t;\xi),\\
\Phi_f(x,t;\lambda \xi) = \lambda \cdot \Phi_f(x, \lambda t;\xi).
\ea\right.
\eneq
(Of course, in the formula above, $\lambda$ acts on the fiber variables.)

Since $f = -\frac12 h^2$, the Hamiltonian vector fields of $f$ and $h$ are related by $X_f = -h X_h = \vvert \xi\vvert X_h$.  In particular,   
we see that $X_f$ and $X_h$ are tangent to the unit co-sphere $S^*_X(r)$ and their restrictions to $S^*_X(1)$ coincide.  It 
follows that $\Phi_h(x,t;\xi) = \Phi_f(x,t;\xi)$ if $\vvert \xi\vvert = 1$ and, by homogeneity, using~\eqref{eq:phihomog}
\eq\label{eq:phih}
\Phi_h(x,t;\xi)  = \vvert\xi\vvert_x \cdot  \Phi_f(x,t; \frac{\xi}{\vvert\xi\vvert_x})
=  \vvert\xi\vvert_x \cdot \Phi_f(x,1;\frac{t}{\vvert\xi\vvert_x}\xi ) \mbox{ for }  \xi\not=0.   
\eneq
By the hypothesis~\eqref{hyp:riemann},  we get
\begin{lemma}
	Hypothesis~\eqref{eq:hyphnottd} is satisfied for $h$.
\end{lemma}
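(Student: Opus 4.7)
The plan is to verify each clause of hypothesis~\eqref{eq:hyphnottd} in turn, with the only nontrivial input being the completeness of the Hamiltonian flow, which will be deduced from the completeness of the geodesic flow via formula~\eqref{eq:phih}.

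First, the function $h(x,\xi)=-\|\xi\|_x$ is defined without reference to the time variable, so it is obviously time-independent. For the homogeneity in the fiber, one simply notes that for $\lambda>0$,
\[
h(x,\lambda\xi)=-\|\lambda\xi\|_x=-\lambda\|\xi\|_x=\lambda\, h(x,\xi),
\]
so $h$ is homogeneous of degree $1$.

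The main point is to check that $\Phi_h$ is defined on all of $\dT^*X\times\R$. For this I would use the relation~\eqref{eq:phih}, which expresses $\Phi_h(x,t;\xi)$ in terms of $\Phi_f(x,1;\,\cdot\,)$. Since $(X,g)$ satisfies~\eqref{hyp:riemann}, it is in particular complete, hence geodesically complete by the Hopf–Rinow theorem. Because the geodesic flow is the Hamiltonian flow of $f=-\tfrac12 h^2$ (as recalled just before~\eqref{eq:phih}), geodesic completeness means precisely that $\Phi_f$ is defined on $T^*X\times\R$, and hence on $\dT^*X\times\R$ (the zero section is preserved and $\dT^*X$ is invariant because $\Phi_f$ preserves the level sets of $f$, in particular the nonvanishing of $\|\xi\|_x$). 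Therefore, for any $(x,\xi)\in\dT^*X$ and any $t\in\R$, the right-hand side
\[
\|\xi\|_x\cdot\Phi_f\!\left(x,1;\tfrac{t}{\|\xi\|_x}\xi\right)
\]
of~\eqref{eq:phih} is well-defined, so $\Phi_h(x,t;\xi)$ is defined for all $t\in\R$.

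There is no real obstacle here; the only point to keep in mind is that $h$ is not smooth on the zero section, so one must work on $\dT^*X$ throughout, but this is already built into the statement. Combining the three verifications yields~\eqref{eq:hyphnottd}.
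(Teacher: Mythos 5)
Your proof is correct and follows essentially the same route as the paper, which deduces the lemma directly from the completeness hypothesis in~\eqref{hyp:riemann} together with the identity~\eqref{eq:phih} relating $\Phi_h$ to the (complete, by Hopf--Rinow) geodesic flow $\Phi_f$. The only point one could sharpen is that~\eqref{eq:phih} a priori holds only where $\Phi_h$ is already defined, so the extension to all of $\R$ is really via the coincidence of $X_f$ and $X_h$ on $S^*_X(1)$ plus homogeneity --- but the paper glosses over this in exactly the same way.
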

Denote as above by  $\Lambda_h$ the Lagrangian manifold given by~\eqref{eq:lagrangianLh}. 
One has
\eq\label{eq:lagrangianLhnt}
&&\Lambda_h=\{(\Phi_h(x,\xi,t),(x,-\xi),(t,\vvert\xi\vvert_x));(x,\xi)\in\dT^*X,t\in \R\}.
\eneq
Denote  by $K^h$ the quantization of $\Lambda_h$ and by 
$\stk^h$ the monoidal presheaf on $(\R,+)$ with values in 
$\Derb(\cor_{X\times X},\conv)$  associated with $K^h$ constructed  in Theorem~\ref{th:stKh} and denote by $\stKd$ the monoidal presheaf associated with the good metric space  $(X,d_X)$ (see Theorem~\ref{th:riemann1} and Notation~\ref{not:stkda}).

With Notations~\eqref{eq:defZ}, the distinguished triangle~\eqref{eq:GKSdt} reads as
\eq\label{eq:GKSdt2}
&&\cor_{\Omega^-}\tens\opb{q_2}\omega_X\to K^d\to\cor_{Z}\to[+1].
\eneq

\begin{lemma}\label{lem:geodflow}
	Assume~\eqref{hyp:riemann}. 
	One has 
	$\Lambda_h \cap T^*J^+ = \dot\SSi(\cor_{Z})\cap T^*J^+$.
\end{lemma}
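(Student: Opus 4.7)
My plan is to compute both sides of the claimed equality directly, and see that they agree as parametrized conic Lagrangian subsets of $T^*J^+$. The whole argument is essentially a sign-matching exercise between the Hamilton equations for $h=-\vvert\xi\vvert$ and the first-variation formula for the Riemannian distance.

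First I would describe $\partial Z\cap J^+$ as the smooth hypersurface $\{(x,y,t);\,d_X(x,y)=t\}\subset J^+$. Since $t>0$ on $J^+$, any boundary point has $x\neq y$ with $d_X(x,y)<r_\iinj$, so $y$ lies in a normal neighbourhood of $x$ and $d_X$ is $C^\infty$ at $(x,y)$. Hence $\phi(x,y,t)\eqdot d_X(x,y)-t$ is smooth near $\partial Z\cap J^+$ with $d\phi\neq 0$ (the $dt$-component alone is $-1$). By the standard computation of the micro-support of the constant sheaf on a closed half-space with smooth boundary (cf.\ \cite{KS90}*{Prop.~5.3.2}) one gets
\eqn
&&\dot\SSi(\cor_Z)\cap T^*J^+=\bigl\{(p;\mu\,d\phi(p))\,;\,\phi(p)=0,\ \mu<0\bigr\}.
\eneqn
Using the first-variation formula, if $\sigma$ denotes the unique unit-speed minimizing geodesic with $\sigma(0)=y$ and $\sigma(t)=x$, then $d_x d_X(x,y)=\dot\sigma(t)^\flat$ and $d_y d_X(x,y)=-\dot\sigma(0)^\flat$, both unit covectors. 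Thus a nonzero element of $\dot\SSi(\cor_Z)$ over $(x,y,t)$ takes the form $\bigl(-\lambda\dot\sigma(t)^\flat,\ \lambda\dot\sigma(0)^\flat,\ \lambda\bigr)$ for some $\lambda>0$ (writing $\lambda=-\mu$).

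Next I would compute $\Lambda_h\cap T^*J^+$. Writing $\xi=\lambda\omega$ with $\lambda=\vvert\xi\vvert_x>0$ and $\omega$ a unit covector, the Hamilton equations for $h=-\vvert\xi\vvert$ give $\dot x=-\xi^\sharp/\vvert\xi\vvert_x$, so the base-point projection of $\Phi_h(x,\xi,t)$ is $\gamma(t)$, where $\gamma$ is the unit-speed geodesic with $\gamma(0)=x$ and $\dot\gamma(0)=-\omega^\sharp$. For $t>0$ this gives a point $p$ with $d_X(x,p)=t<r_\iinj$. The co-vector part $\eta'$ of $\Phi_h(x,\xi,t)$ is determined by the same equation $\dot x=-\eta'^{\,\sharp}/\vvert\eta'\vvert_p$ combined with $\vvert\eta'\vvert_p=\vvert\xi\vvert_x=\lambda$, giving $\eta'=-\lambda\dot\gamma(t)^\flat$. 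Recalling the parametrization in~\eqref{eq:lagrangianLhnt}, a typical point of $\Lambda_h$ lying over $J^+$ is therefore
\eqn
&&\bigl(p,-\lambda\dot\gamma(t)^\flat;\ x,-\lambda\omega;\ t,\lambda\bigr)=\bigl(p,-\lambda\dot\gamma(t)^\flat;\ x,\lambda\dot\gamma(0)^\flat;\ t,\lambda\bigr),
\eneqn
using $\omega=-\dot\gamma(0)^\flat$.

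Finally I would identify the two descriptions: setting $x_1=p$, $x_2=x$ and $\sigma=\gamma$ (so that $\sigma(0)=x_2=x$, $\sigma(t)=x_1=p$, $d_X(x_1,x_2)=t$), the point of $\Lambda_h$ above is exactly the point of $\dot\SSi(\cor_Z)$ corresponding to the parameter $\lambda>0$. Conversely, given any boundary point of $Z$ in $J^+$ and any $\lambda>0$, one constructs the preimage in $\Lambda_h$ by setting $x=x_2$, $\omega=-\dot\sigma(0)^\flat$, and taking the flow at time $t$. This gives a bijection that respects the fibration over $\partial Z\cap J^+$ and is compatible with the $\R_{>0}$-action, yielding the claimed equality.

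The only genuine subtlety is the sign bookkeeping: translating Hamilton's equations for the non-smooth-at-zero Hamiltonian $h=-\vvert\xi\vvert$ into a unit-speed geodesic statement, and matching with the convention for the inward conormal of the closed set $\{\phi\leq 0\}$. Everything else reduces to the standard formula for the differential of the Riemannian distance, which is licit because we stay strictly inside the injectivity radius.
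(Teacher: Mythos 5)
Your proof is correct. It reaches the same geometric conclusion as the paper — that $\Lambda_h\cap T^*J^+$ is exactly the half of $\dT^*_{\partial\Omega^+}J^+$ with $\tau>0$, which is $\dot\SSi(\cor_Z)\cap T^*J^+$ — but by a genuinely different route. The paper only computes the \emph{base projection} $\pi_{J^+}(\Lambda_h\cap T^*J^+)=\partial\Omega^+$, then invokes the structural result that a conic Lagrangian lying over a smooth hypersurface must be an open subset of its conormal bundle (\cite{KS90}*{Prop.~8.3.10}), and finally uses the single inequality $\tau\geq 0$ (coming from $h\leq 0$) to pin down which half-conormal it is; no differentiation of $d_X$ is needed. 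You instead parametrize both sides explicitly: Hamilton's equations for $h=-\vvert\xi\vvert_x$ on one side, the first-variation formula $d_xd_X=\dot\sigma(t)^\flat$, $d_yd_X=-\dot\sigma(0)^\flat$ on the other, and match point by point. Your sign conventions all check out (the micro-support of $\cor_{\{\phi\le 0\}}$ along a smooth boundary consists of the \emph{negative} multiples of $d\phi$, which here gives $\tau=-\mu>0$, consistent with $\Lambda_h\subset\{\tau\geq0\}$), and the use of the first-variation formula is legitimate since $0<d_X(x,y)=t<r_\iinj$ keeps you strictly inside the injectivity radius where $d_X$ is smooth and radial geodesics are minimizing. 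What the paper's argument buys is brevity and robustness (no explicit formula for $d(d_X)$); what yours buys is that it is self-contained and makes the bijection between flow lines and conormal directions completely explicit, which is arguably more illuminating. Both are valid proofs of the lemma.
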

\begin{proof}
	(i) Recall that
	\eq\label{eq:lagrangianLh2}
	&&\Lambda_h=\{(\Phi_h(x,\xi,t),(x,-\xi),(t,-h(\Phi_h(x,\xi,t));(x,\xi))\in\dT^*X,t\in I\}.
	\eneq
	In particular, 
	\begin{align*}
		\pi_{J^+}(\Lambda_h\cap T^*J^+)
		&=E_f(\{\vvert\xi\vvert_x\leq t\})  = \partial\Omega^+.
	\end{align*}
	
	\spa
	(ii) The set $\partial\Omega^+$ is a smooth
	hypersurface of $J^+$ and it follows from~\cite{KS90}*{Prop.~8.3.10} that 
	$\Lambda_h\cap T^*J^+$ is one half of
	$\dT^*_{\partial\Omega^+}J^+$.    
	Since $\Lambda_h\subset\{\tau\geq0\}$, $\Lambda_h$ is the interior conormal to $\partial\Omega^+$.
\end{proof}
Denote by $j\cl J^+\into J$ the open embedding. 
\begin{lemma}\label{le:2ker1}
	One has $\cor_Z\simeq \roim{j}\opb{j}\cor_{Z}$.
\end{lemma}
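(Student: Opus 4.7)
The strategy is to show that the natural adjunction morphism $\alpha\cl\cor_Z\to\roim{j}\opb{j}\cor_Z=\roim{j}\cor_{Z\cap J^+}$ is an isomorphism. Since its restriction along $j$ is tautologically the identity, it suffices to verify that $\alpha$ induces an isomorphism on stalks at each point of the closed complement $J\setminus J^+=X\times X\times(-r_\iinj,0]$; equivalently, that $\epb{i}\cor_Z\simeq 0$ for $i\cl J\setminus J^+\hookrightarrow J$ the closed embedding.

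Points off the diagonal at $t=0$ present no difficulty. If $(x_0,y_0,t_0)$ satisfies either $t_0<0$, or $t_0=0$ with $x_0\neq y_0$, then a sufficiently small open product neighborhood $W$ of this point is disjoint from $Z$: in the first case because $Z\subset\{t\geq 0\}$, in the second because the distance between small balls around $x_0$ and $y_0$ stays bounded below while $|t|$ may be taken arbitrarily small. Then $\cor_Z|_W=0$ and $W\cap Z\cap J^+=\varnothing$, so both stalks at the point vanish.

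The essential case is a diagonal point $(x_0,x_0,0)\in Z$, where $(\cor_Z)_{(x_0,x_0,0)}\simeq\cor$. Choose $\delta>0$ small enough that $4\delta<r_\iinj$ and the open ball $B^\circ_\delta(x_0)$ is geodesically convex (hence contractible); set $W_\delta\eqdot B^\circ_\delta(x_0)\times B^\circ_\delta(x_0)\times(-\delta,4\delta)$. Then
\begin{equation*}
W_\delta\cap Z\cap J^+=\bigl\{(x,y,t)\,;\,x,y\in B^\circ_\delta(x_0),\;d_X(x,y)\leq t<4\delta\bigr\},
\end{equation*}
and the linear homotopy $H\bigl((x,y,t),s\bigr)=\bigl(x,y,(1-s)t+3s\delta\bigr)$, $s\in[0,1]$, is a strong deformation retract of this set onto the slice $B^\circ_\delta(x_0)^2\times\{3\delta\}$: indeed the $t$-coordinate stays in $(0,4\delta)$ and dominates $d_X(x,y)\leq 2\delta<3\delta$ throughout, and the slice is pointwise fixed. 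As the slice is contractible, $\rsect(W_\delta\cap J^+;\cor_{Z\cap J^+})\simeq\cor$, and inspection shows the adjunction map $\rsect(W_\delta;\cor_Z)\simeq\cor\to\cor$ is the identity. Passing to the colimit as $\delta\to 0$ completes the verification.

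The main technical point is the construction of this deformation retract at the diagonal, which geometrically reflects that the ``half-cone'' $\{d_X(x,y)\leq t,\,t>0\}$ extends continuously down to the diagonal at $t=0$ without trapping additional cohomology; the remaining stalks are handled purely by positivity of the distance and the geodesic convexity of small balls.
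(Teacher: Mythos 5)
Your proof is correct, but it takes a genuinely different route from the paper's. The paper's argument is a two-line duality trick: since $\Omega^+$ is an open subset of $J^+$, one has $\cor_{\Omega^+}\simeq\eim{j}\opb{j}\cor_{\Omega^+}$ tautologically; applying $\RD'_{X\times X\times\R}$, using that $\cor_Z$ and $\cor_{\Omega^+}$ are dual to each other (Lemma~\ref{lem:add2}, which rests on the l.t.c.\ hypothesis~\eqref{hyp:dist2}, verified for Riemannian manifolds in Theorem~\ref{th:riemann1}) and that $\RD'\circ\eim{j}\simeq\roim{j}\circ\RD'$, yields the statement. You instead verify the adjunction morphism stalk by stalk over the closed complement $\{t\leq 0\}$; the only nontrivial case is a diagonal point at $t=0$, where you produce an explicit deformation retract of $W_\delta\cap Z\cap J^+$ onto a contractible slice. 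Your route is more elementary and self-contained: it uses only positivity of the distance and geodesic convexity of small balls, and bypasses cohomological constructibility and the duality of $\cor_Z$ and $\cor_{\Omega^+}$ altogether, at the cost of length and of invoking homotopy invariance of sheaf cohomology with constant coefficients (legitimate here, as the sets involved are locally contractible). Two minor points: your displayed formula for $W_\delta\cap Z\cap J^+$ omits the constraint $t>0$ (as written it describes $W_\delta\cap Z$), but your homotopy preserves $t>0$, so the argument is unaffected; and the final ``inspection'' that the adjunction map is the identity amounts to noting that the restriction $\rsect(W_\delta\cap Z;\cor)\to\rsect(W_\delta\cap Z\cap J^+;\cor)$ is injective on $H^0$ between two copies of $\cor$, which is worth saying explicitly.
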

\begin{proof}
	One has $\cor_{\Omega^+}\simeq \eim{j}\opb{j}\cor_{\Omega^+}$. Applying the duality functor $\RD'_{X\times X\times\R}$ we get the result by Lemma~\ref{lem:add2}. (Recall that, setting $M=X\times X\times\R$,   $\RD'_M\circ\eim{j}\simeq\roim{j}\circ\RD'_M$. )
\end{proof}

In the proof of the next lemma, we shall use the operation $\widehat{+}$ defined in~\cite{KS90}*{\S~6.2}.
\begin{lemma}\label{le:2ker2}
	One has 
	\banum
	\item
	$\SSi(\cor_Z)\cap \opb{\pi}(X\times X\times\{0\})\subset\{(x,x,0;\xi,-\xi,\tau);\tau\geq\vvert\xi\vvert_x\}$,
	\item 
	One has $\SSi(\cor_{\Omega^-})\cap \opb{\pi}(X\times X\times\{0\})\subset\{(x,x,0;\xi,-\xi,\tau);\tau\geq\vvert\xi\vvert_x\}$.
	\eanum
\end{lemma}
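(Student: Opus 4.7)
The plan is to prove (a) by transporting the computation to a model cone via the exponential map, and to deduce (b) from (a) by duality combined with the time-reversal involution $\iota(x,y,t)=(x,y,-t)$.

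For (a), I would invoke Lemma~\ref{le:geoflo}: the map $E_f(x,\xi,t)=(e_f(x,\xi,1),x,t)$ is a local diffeomorphism near $(x_0,0,0)\in T^*X\times I$ that carries the model set $A=\{\vvert\xi\vvert_x\leq t\}$ onto $Z$. Thus $\cor_Z\simeq(E_f)_*\cor_A$ in a neighborhood of $(x_0,x_0,0)$, so microsupport transports by the transpose of $dE_f$. A direct matrix computation at $(x_0,0,0)$, using $d(\exp_{x_0})_0=\mathrm{id}$, shows $dE_f$ acts as $(u,v,s)\mapsto(u+v,u,s)$, and its transpose sends a covector $(\alpha_1,\alpha_2,\alpha_3)$ at $(x_0,x_0,0)$ to $(\alpha_1+\alpha_2,\alpha_1,\alpha_3)$ at $(x_0,0,0)$.

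It then suffices to bound $\SSi(\cor_A)_{(x_0,0,0)}$. The tangent cone of $A$ at $(x_0,0,0)$ is $T_{x_0}X\times K$ where $K=\{(\xi,t)\in T^*_{x_0}X\times\R:\vvert\xi\vvert_{x_0}\leq t\}$ is a closed convex cone, and since $\cor_A$ is cohomologically constructible, its microsupport at the apex sits inside the dual cone of the tangent cone. For this $K$ one checks directly that $\SSi(\cor_K)_0\subset K^*=\{(\zeta,\tau):\tau\geq\vvert\zeta\vvert_{x_0}\}$: if $(\zeta_0,\tau_0)\in K^*$ then $\phi=\zeta_0\cdot\xi+\tau_0 t$ is nonnegative on $K$, so $K\cap\{\phi<0\}$ is empty near $0$ and $R\Gamma_{\{\phi\geq0\}}(\cor_K)_0\simeq\cor$; conversely, if $(\zeta_0,\tau_0)\notin K^*$ then $K\cap\{\phi<0\}$ is nonempty and convex near $0$, so the restriction $\Gamma(V\cap K)\to\Gamma(V\cap K\cap\{\phi<0\})$ is an isomorphism of constant sheaves and $R\Gamma_{\{\phi\geq0\}}(\cor_K)_0$ vanishes. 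Combined with the $x$-independence forced by K\"unneth, this gives that covectors in $\SSi(\cor_A)_{(x_0,0,0)}$ are of the form $(0,\zeta,\tau)$ with $\tau\geq\vvert\zeta\vvert_{x_0}$. Applied to the transport formula above, this yields $\alpha_1+\alpha_2=0$ and $\alpha_3\geq\vvert\alpha_1\vvert_{x_0}$, proving (a).

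For (b), the involution $\iota(x,y,t)=(x,y,-t)$ on $J$ satisfies $\iota^{-1}(\Omega^+)=\Omega^-$, so $\cor_{\Omega^-}\simeq\iota^{-1}\cor_{\Omega^+}$; the microsupport of $\cor_{\Omega^-}$ at the fixed point $(x_0,x_0,0)$ is therefore obtained from that of $\cor_{\Omega^+}$ by the sign-flip $\tau\mapsto-\tau$ on the $t$-covariable. By Lemma~\ref{lem:add2}, $\cor_Z$ and $\cor_{\Omega^+}$ are mutually dual under $\RD'_{J}$, so by the formula $\SSi(\RD'F)=\bigl(\SSi(F)\bigr)^a$ for cohomologically constructible $F$ (antipodal), part (a) gives $\SSi(\cor_{\Omega^+})_{(x_0,x_0,0)}\subset\{(\xi,-\xi,\tau):\tau\leq-\vvert\xi\vvert_{x_0}\}$. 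Flipping the sign of $\tau$ gives the desired bound for $\cor_{\Omega^-}$.

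The main obstacle I anticipate is justifying that $\SSi(\cor_A)$ at the apex is determined by the tangent cone, so that the variable-norm cone $\{\vvert\xi\vvert_x\leq t\}$ may be replaced by the constant-norm $K$. This can be settled either by a local $C^1$-change of coordinates rectifying $A$ to a product neighborhood $U\times K$---which is available because the Riemannian norm is smooth in $x$ and coincides with $\vvert\cdot\vvert_{x_0}$ at $x_0$---or by the general normal cone bound on microsupports for $\mathbb{R}$-constructible sheaves, which is where the operation $\widehat{+}$ would enter.
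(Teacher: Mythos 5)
Your argument is correct; part (b) is exactly the paper's proof (duality via Lemma~\ref{lem:add2} composed with the time reversal $(x,y,t)\mapsto(x,y,-t)$), but for part (a) you take a genuinely different route. The paper does not compute the microsupport at $t=0$ from a local model of $Z$: it writes $\cor_Z\simeq\roim{j}\opb{j}\cor_Z$ for the open embedding $j\cl J^+\into J$ (Lemma~\ref{le:2ker1}), identifies $\dot\SSi(\cor_Z)$ over $J^+$ with $\Lambda_h$ (Lemma~\ref{lem:geodflow}), and then controls the fibre over $t=0$ by the direct-image estimate of \cite{KS90}*{Th.~6.3.1}, which reduces the problem to the inclusion $(\Lambda_h\cap T^*J^+)\widehat{+}\{(x,y,0;0,0,\tau);\tau\geq0\}\subset\{(x,x,0;\xi,-\xi,\tau);\tau\geq\vvert\xi\vvert_x\}$, read off from the first-order expansion of $\Lambda_h$ near $t=0$. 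You instead pull $Z$ back through $E_f$ (Lemma~\ref{le:geoflo}) to the cone $A=\{\vvert\xi\vvert_x\leq t\}$ and compute at the apex. The obstacle you flag at the end is not serious: trivializing $T^*X$ near $x_0$ by a $g$-orthonormal frame is a $C^\infty$ change of coordinates sending $A$ to $U\times K$ with $K$ a fixed proper closed convex cone, and it acts as the identity on covectors at $(x_0,0,0)$ since the fibre variable vanishes there; this is precisely the reduction to a constant cone that the paper itself performs in parts (B)--(C) of the proof of Theorem~\ref{th:riemann1} and in Lemma~\ref{le:2ker3}(a), and the bound $\SSi(\cor_K)\cap\opb{\pi}(0)\subset K^\circ$ for a closed convex cone is standard (\cite{KS90}*{\S~5.3}). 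Your computations of $dE_f$ at $(x_0,0,0)$ and of the polar cone $K^\circ=\{\tau\geq\vvert\zeta\vvert_{x_0}\}$ are correct. What your route buys is that (a) becomes independent of Lemma~\ref{le:2ker1} and of the $\widehat{+}$ machinery; what the paper's route buys is that it recycles the identification of $\dot\SSi(\cor_Z)$ with $\Lambda_h$, which is the object the subsequent comparison theorem is actually about.
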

\begin{proof}
	(a) Recall~\eqref{eq:lagrangianLhnt}. We have in a neighborhood of $t=0$
	\eq
	&&\Lambda_h=\{( x- \frac{t}{\vvert\xi\vvert_x}\xi +t^2\epsilon(x,t,\xi),  x,t;\xi+t\eta(x,t,\xi), -\xi,\vvert\xi\vvert_x);(x,\xi)\in\dT^*X,t\in \R\}.
	\eneq
	This implies 
	\eqn
	&&(\Lambda_h\cap T^*J^+)\widehat{+}\{(x,y, 0;0,0,\tau\geq0)\}\subset \{(x,x,0;\xi,-\xi,\tau);\tau\geq\vvert\xi\vvert_x\}.
	\eneqn
	To conclude,  apply~\cite{KS90}*{Th.~6.3.1} together with Lemmas~\ref{lem:geodflow} and~\ref{le:2ker1}.
	
	\spa
	(b) follows from (a) by applying the duality functor (using Lemma~\ref{lem:add2}) together with $\oim{v}$ where $v$ is the map $(x,y,t)\mapsto(x,y,-t)$. 
\end{proof}

\begin{lemma}\label{le:2ker3}
	Let $p=(x,x,0;\xi,-\xi,\tau)$ with $\tau>\vvert\xi\vvert_x$. Then 
	\banum
	\item
	the natural morphism 
	$\cor_Z\to\cor_{\Delta\times\{0\}}$ is an isomorphism in $\Derb(\cor_J;p)$. 
	\item 
	the natural morphism 
	$\cor_{\Delta\times\{t=0\}}\tens\opb{q_2}\omega^{\otimes-1}_X[-1]\to \cor_{\{d_X(x,y)<-t\}}$
	is an isomorphism in $\Derb(\cor_J;p)$. 
	\eanum
\end{lemma}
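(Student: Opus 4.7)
For (a), the natural morphism $\cor_Z\to\cor_{\Delta\times\{0\}}$ is the canonical surjection associated with the closed inclusion $\Delta\times\{0\}\subset Z$, which is closed in $J$ because $d_X(y,z)\leq 0$ forces $y=z$. Setting $W\eqdot Z\cap J^+$, we have the distinguished triangle
\begin{equation*}
\cor_W \to \cor_Z \to \cor_{\Delta\times\{0\}}\to [+1],
\end{equation*}
so (a) reduces to showing $p\notin\SSi(\cor_W)$.

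I propose to establish this by a Morse-theoretic test. Choose exponential normal coordinates at $x$ via $u\eqdot\exp_x^{-1}$, identifying a neighborhood of $x$ with an open ball in $T_x X$, and set $\phi(y,z,s) \eqdot \langle\xi,u(y)-u(z)\rangle + \tau s$ near $(x,x,0)$. Then $\phi(x,x,0)=0$ and $d\phi(x,x,0) = (\xi,-\xi,\tau)$. By the Gauss lemma, for any $\varepsilon>0$ there is a neighborhood of $(x,x,0)$ on which $d_X(y,z)\geq (1-\varepsilon)\vvert u(y)-u(z)\vvert_x$. Hence on $W$ one has $\vvert u(y)-u(z)\vvert_x\leq s/(1-\varepsilon)$, and therefore
\begin{equation*}
\phi(y,z,s)\geq -\vvert\xi\vvert_x\frac{s}{1-\varepsilon}+\tau s = s\Bigl(\tau - \frac{\vvert\xi\vvert_x}{1-\varepsilon}\Bigr).
\end{equation*}
Since $\tau>\vvert\xi\vvert_x$, choosing $\varepsilon$ small enough makes the right-hand side strictly positive for $s>0$, so $\{\phi<0\}\cap W = \varnothing$ in a neighborhood $U$ of $(x,x,0)$. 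With $j\cl \{\phi<0\}\into J$ the open embedding, in the distinguished triangle
\begin{equation*}
(\rsect_{\{\phi\geq 0\}}\cor_W)_{(x,x,0)}\to (\cor_W)_{(x,x,0)}\to (\roim{j}\opb{j}\cor_W)_{(x,x,0)}\to [+1]
\end{equation*}
the middle term vanishes (since $(x,x,0)\notin W$) and the right-hand term vanishes (since $\{\phi<0\}\cap W\cap U=\varnothing$). The estimate being uniform on a small neighborhood of $p$ in $T^*J$, \cite{KS90}*{Prop.~5.1.1} yields $p\notin\SSi(\cor_W)$ and hence proves (a).

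For (b), set $Z^-\eqdot\{d_X(y,z)\leq -t\}\subset J$, let $\iota\cl J\to J$ be the involution $(y,z,t)\mapsto(y,z,-t)$, and denote by $\alpha^-\cl \cor_{Z^-}\to\cor_{\Delta\times\{0\}}$ the analogous natural surjection; note $\alpha^-=\opb{\iota}\alpha$ where $\alpha$ is the morphism of (a). By the construction in the proof of Theorem~\ref{th:bithick3}, the morphism $\beta\cl \cor_{\Delta\times\{0\}}\tens \opb{q_2}\omega_X^{\otimes-1}[-1]\to\cor_{\Omega^-}$ is obtained from $\alpha^-$ by applying $\RD'_J$, using Lemma~\ref{lem:add2} (in its $\iota$-pulled-back version) to identify $\RD'_J\cor_{\Delta\times\{0\}}\simeq\cor_{\Delta\times\{0\}}\tens \opb{q_2}\omega_X^{\otimes-1}[-1]$ and $\RD'_J\cor_{Z^-}\simeq\cor_{\Omega^-}$. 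Applying (a) with $-\xi$ in place of $\xi$ (permissible because $\tau>\vvert -\xi\vvert_x$) gives $\alpha$ iso at $(x,x,0;-\xi,\xi,\tau)$; since $\opb{\iota}$ sends a covector $(\eta_1,\eta_2,\sigma)$ at $(y,z,-t)$ to $(\eta_1,\eta_2,-\sigma)$ at $(y,z,t)$, it follows that $\alpha^-$ is iso at $(x,x,0;-\xi,\xi,-\tau)=p^a$. Since $\RD'_J$ transforms iso at $p^a$ into iso at $(p^a)^a=p$ via the antipodal map on $\SSi$, this gives (b). The main technical point throughout is the Gauss-lemma comparison in the Morse step; one can sidestep it by transporting $\cor_Z$ through the diffeomorphism of Lemma~\ref{le:geoflo} to $\cor_A$ on $T^*X\times I$, where the estimate becomes tautological in the fiber coordinates.
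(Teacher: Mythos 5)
Your reduction of (a) to the vanishing of the micro-support of the third term in the triangle $\cor_W\to\cor_Z\to\cor_{\Delta\times\{0\}}\to[+1]$ is correct, and your treatment of (b) by duality (using Lemma~\ref{lem:add2} and the behaviour of $\SSi$ under $\RD'$ on cohomologically constructible objects) is essentially the paper's argument. The gap is in the Morse-theoretic step for (a). What you actually verify is that $\{\phi<0\}\cap W=\varnothing$ near $(x,x,0)$ for the \emph{one} linear function $\phi=\langle\xi,u(y)-u(z)\rangle+\tau s$ centred at the \emph{one} point $(x,x,0)$. The definition of the micro-support (\cite{KS90}*{Def.~5.1.2}) requires $(\rsect_{\{\phi_1\geq0\}}\cor_W)_{x_1}\simeq0$ for all points $x_1$ near $(x,x,0)$ and all $C^1$ functions $\phi_1$ with $\phi_1(x_1)=0$ and $d\phi_1(x_1)$ near $(\xi,-\xi,\tau)$, and your claim that ``the estimate is uniform'' fails in the form you use it: for $x_1=(y,y,0)$ with $y$ near $x$ and a covector $(\xi,-\xi+\delta,\tau)$ with $\delta\neq0$ arbitrarily small, the perturbation contributes a term $\langle\delta,u(y)+u(z)\rangle$ along the diagonal which is not controlled by $s$, so $\{\phi_1<0\}$ meets $W$ in every neighborhood of $x_1$ (take $s$ tiny and move along the diagonal direction). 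Likewise, for $x_1$ an interior point of $W$ the set $\{\phi_1<0\}$ certainly meets $W$, and the vanishing of the stalk there must be argued differently (local constancy plus $d\phi_1\neq0$). So the mechanism ``$\{\phi<0\}\cap W=\varnothing$ implies the stalk vanishes'' only applies at the vertex points, and the remaining test data are not treated.

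The conclusion $p\notin\SSi(\cor_W)$ is true, but establishing it requires either the full case analysis over all nearby test points (off $\ol W$, interior of $W$, smooth boundary points where the conormal has $\tau=\vvert\xi\vvert_x$, and the vertex points), or the route the paper takes: transport $Z$ to the conic model $A\subset T^*X\times I$ via the exponential map of Lemma~\ref{le:geoflo} and then invoke Lemma~\ref{le:wk}, whose proof is a Fourier--Sato transform argument, not a pointwise positivity estimate. Your closing remark that the estimate ``becomes tautological in the fiber coordinates'' after passing to $\cor_A$ underestimates exactly this point: in the conic model the same microlocal difficulty persists (one must show that a covector in the interior of the polar cone is not in $\SSi$ of the punctured cone), and that is precisely what Lemma~\ref{le:wk} supplies.
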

\begin{proof}
	(a) 
	Similarly as in part (C) of the proof of Theorem~\ref{th:riemann1}, the set $Z$ is, in a neighborhood of $\Delta\times\{0\}$ and locally 
	on $X\times X\times\R$, $C^\infty$-isomorphic to the set $A$ of~\eqref{eq:defZ}. 
	We are thus reduced to prove a similar result with $Z$ and $\Delta\times\{0\}$ replaced with $A$ and $T^*_XX\times\{0\}$. 
	In this case, the result follows from Lemma~\ref{le:wk} below.

	\spa
	(b) follows from (a) by applying the duality functor, using Lemma~\ref{lem:add2}.
\end{proof}

\begin{lemma}\label{le:wk}
	Let $E$ be a vector bundle over $X$ and let  $\gamma\subset E$ be a closed convex proper cone containing the zero-section $X$. 
	Let $p\in T^*E\times_EX$ with $p\in\Int(\gamma^\circ)$. Then the natural morphism $\cor_\gamma\to\cor_X$ is an isomorphism in 
	$\Derb(\cor_E;p)$.
\end{lemma}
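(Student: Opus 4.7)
The morphism $\cor_\gamma\to\cor_X$ comes from the closed embedding $X\subset\gamma$ and fits into a distinguished triangle
\[
\cor_{\gamma\setminus X}\to\cor_\gamma\to\cor_X\xrightarrow{+1},
\]
so the statement is equivalent to $p\notin\SSi(\cor_{\gamma\setminus X})$. The plan is to verify this non-membership via the microsupport criterion \cite{KS90}*{Prop.~5.1.1}, through a direct computation of local cohomology of $\cor_{\gamma\setminus X}$.

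I would first work locally on $X$ near the base point $x_0$ of $p$, choosing a trivialization $E\simeq U\times V$ so that $p=(0,p_V)$ with $p_V\in\Int(\gamma_{x_0}^\circ)\subset V^*$. Since $\gamma$ is closed in $E$ and proper, the fiberwise polar $\gamma^\circ$ is closed in $E^*$. Combining compactness of the unit cross-section of $\gamma_{x_0}$ with openness of the interior, I would derive a uniform positivity bound: there exist $c>0$ and neighborhoods $U_0$ of $x_0$ in $X$ and $N_0$ of $p_V$ in $V^*$ such that
\[
\langle v,\xi_V\rangle\ge c\|v\|\qquad\text{for every }x\in U_0,\ v\in\gamma_x,\ \xi_V\in N_0.
\]

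With this bound in hand, I would check the non-membership by analyzing, for $y$ near $x_0$, $\xi$ near $p$, and any $C^1$ function $\psi$ near $y$ with $\psi(y)=0$ and $d\psi(y)=\xi$, the stalk $(\rsect_{\{\psi\ge 0\}}\cor_{\gamma\setminus X})_y$, in three cases. If $y\notin\gamma$, the sheaf vanishes near $y$ and the stalk is zero. If $y\in\gamma\setminus X$, then $\cor_{\gamma\setminus X}$ coincides with $\cor_\gamma$ in a neighborhood of $y$, and by \cite{KS90}*{Prop.~5.3.2} the microsupport $\SSi(\cor_\gamma)_y$ lies in the normal cone to $\gamma$ at $y=(x,v)$; any $\eta$ in this cone satisfies $\langle w,\eta_V\rangle\le 0$ for $w\in\gamma_x$ and in particular $\langle v,\eta_V\rangle=0$, while the uniform bound forces $\langle v,\xi_V\rangle>0$ for $\xi$ close to $p$, ruling out $\xi\in\SSi(\cor_\gamma)_y$. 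Finally, if $y\in X\cap U_0$, the first-order expansion of $\psi$ together with the bound yields $\psi>0$ on $\gamma\setminus X$ in a punctured neighborhood of $y$; hence $\cor_{\gamma\setminus X}|_{\{\psi<0\}}=0$ there and the defining triangle collapses to $\rsect_{\{\psi\ge 0\}}\cor_{\gamma\setminus X}\simeq\cor_{\gamma\setminus X}$ near $y$, whose stalk at $y\in X$ is zero since $\cor_{\gamma\setminus X}$ is extended by zero from $\gamma\setminus X$.

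The main technical hurdle will be the uniform positivity bound: one must control the cone $\gamma_x$ as $x$ varies near $x_0$ while simultaneously allowing the covector $\xi_V$ to vary near $p_V$. This rests on closedness of $\gamma^\circ$ in $E^*$ (coming from closedness of $\gamma$ in $E$), compactness of the unit sphere in $V$ intersected with $\gamma_{x_0}$, and standard convexity arguments.
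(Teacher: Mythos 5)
Your reduction to showing $p\notin\SSi(\cor_{\gamma\setminus X})$ via the excision triangle is correct, your uniform positivity bound is correct (openness of $\Int\gamma^\circ$ in $E^*$ already gives it: if $B(p_V,2r)\times U_0\subset\gamma^\circ$ then $\langle v,\xi_V\rangle\geq r\vvert v\vvert$ for $\xi_V\in B(p_V,r)$, $v\in\gamma_x$, $x\in U_0$), and Cases 1 and 2 are sound in substance. (In Case 2 you should derive $\langle v,\eta_V\rangle=0$ from the conicity of $\cor_\gamma$ for the fiberwise $\R_{>0}$-action, see \cite{KS90}*{\S~5.5}, rather than from \cite{KS90}*{Prop.~5.3.2}: $\gamma$ is only fiberwise convex, not convex as a subset of $E$.) The genuine gap is in Case 3. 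The covector $\xi$ at $y=(x,0)\in X$ has a base component $\xi_X\in T^*_xX$ in addition to the fiber component $\xi_V$ --- and this component cannot be assumed to vanish, both because $p$ itself has an arbitrary $T^*X$-component (in the paper's setting $p=((x;\xi),(0,\eta))$ with $\xi$ unconstrained), and because the micro-support criterion must in any case be verified for all covectors in a neighborhood of $p$. For $z=(x',w)\in\gamma\setminus X$ one has $\psi(z)=\langle\xi_X,x'-x\rangle+\langle\xi_V,w\rangle+o(|x'-x|+\vvert w\vvert)$, and when $\vvert w\vvert$ is much smaller than $|x'-x|$ the base term dominates; choosing $x'-x$ opposite to $\xi_X$ produces points of $\gamma\setminus X$ arbitrarily close to $y$ with $\psi<0$. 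Already for $E=\R_x\times\R_w$, $\gamma=\{w\geq0\}$ and $\psi=\xi_Xx+\xi_Vw$ with $\xi_X\neq0$, the set $\{\psi<0\}$ meets $\{w>0\}$ in every neighborhood of the origin. Hence your claim that $\cor_{\gamma\setminus X}\vert_{\{\psi<0\}}$ vanishes near $y$ is false, and the identification $\rsect_{\{\psi\geq0\}}\cor_{\gamma\setminus X}\simeq\cor_{\gamma\setminus X}$ near $y$ collapses.

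The conclusion of Case 3 is still true, but establishing it requires showing that $\varinjlim_U\rsect\bigl(U\cap\{\psi<0\};\cor_{\gamma\setminus X}\bigr)$ vanishes, i.e., genuinely controlling the micro-support of $\cor_{\gamma\setminus X}$ in the base directions along the zero-section; your positivity bound, which only sees the fiber directions, does not do this. This is precisely the difficulty that the paper's proof avoids: there one applies the Fourier--Sato transform (\cite{KS90}*{Lem.~3.7.10} and \cite{KS90}*{Th.~5.5.5}), which exchanges $\cor_\gamma$ and $\cor_X$ with $\cor_{\Int\gamma^\circ}$ and $\cor_{E^*}$ and turns the fiber covector $\eta$ into the base point $(x,\eta)$ of the transformed covector $q$; since $(x,\eta)$ lies in the open set $\Int\gamma^\circ$ where the two transformed sheaves literally coincide, the isomorphism in $\Derb(\cor_{E^*};q)$ is immediate, with no case analysis and no constraint on the $T^*X$-component. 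If you want to keep a direct argument, you must either compute the relative cohomology in Case 3 by hand (fibering $(\gamma\setminus X)\cap\{\psi<0\}$ over its projection to $X$ and checking acyclicity of the fibers) or prove a bound on the $T^*X$-component of $\SSi(\cor_{\gamma\setminus X})$ over the zero-section; neither is automatic.
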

\begin{proof}
	We may assume that $E=X\times \BBV$ for a real vector space $\BBV$. Let us choose local coordinates 
	on $X$ and identify $T^*\BBV$ with $\BBV\times\BBV^*$. Then $p=((x;\xi),(0,\eta))\in T^*X\times\BBV\times\BBV^*$. 
	By~\cite{KS90}*{Lem.~3.7.10}, the Fourier-Sato transform interchanges the two objects $\cor_\gamma$ and  $\cor_{X\times\{0\}}$ of  
	$\Derb(\cor_{E})$ with the two objects  $\cor_{\Int\gamma^\circ}$ and  $\cor_{E^*}$ of $\Derb(\cor_{E^*})$. Hence, applying Th.~5.5.5 and formula (5.5.6) of loc.\ cit., we are reduced to prove that the natural morphism 
	$\cor_{\Int\gamma^\circ}\to\cor_E$ is an isomorphism in $\Derb(\cor_{E^*};q)$ with 
	$q=((x;\xi),(\eta,0))\in T^*X\times\BBV^*\times\BBV$, which is obvious since the two sheaves are isomorphic in a neighborhood of any point 
	$(x,\eta)\in X\times \Int\gamma^\circ$.
\end{proof}

Recall  the sheaf  $K^d$ constructed in Theorem~\ref{th:bithick3} and the monoidal presheaf $\stKd$.
\begin{theorem}
	Let $(X,g)$ be a  complete Riemannian manifold satisfying~\eqref{hyp:riemann}. Then
	\banum
	\item
	One has the isomorphism $K^h \vert_{J} \simeq K^d\vert_{J}$.
	\item
	the two monoidal presheaves  $\stK^h$ and $\stKd$ are isomorphic. 
	\eanum
\end{theorem}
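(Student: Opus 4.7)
The plan is to apply the uniqueness clause of the GKS12 quantization theorem to identify $K^h\vert_J$ with $K^d\vert_J$, and then to transfer this identification to the monoidal presheaves via Theorem~\ref{th:pshK}. For the initial condition, restricting the distinguished triangle~\eqref{eq:GKSdt2} to $\{t=0\}$ yields $\cor_{\Omega^-}\vert_{t=0}=0$ (since $\Omega^-\subset J^-$) and $\cor_Z\vert_{t=0}\simeq\cor_\Delta$, so $K^d\vert_{t=0}\simeq\cor_\Delta$.

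For the microsupport bound $\dot\SSi(K^d\vert_J)\subset\Lambda_h$, I would split $J=J^+\sqcup(X\times X\times\{0\})\sqcup J^-$ and treat each piece separately. On $J^+$, $\cor_{\Omega^-}$ vanishes so $K^d\vert_{J^+}\simeq\cor_Z\vert_{J^+}$ and Lemma~\ref{lem:geodflow} gives the bound. On $J^-$, $\cor_Z$ vanishes so $K^d\vert_{J^-}\simeq(\cor_{\Omega^-}\otimes\opb{q_2}\omega_X)\vert_{J^-}$; writing $\cor_{\Omega^-}\simeq\opb{v}\cor_{\Omega^+}$ for the time-reversal $v\cl(x,y,t)\mapsto(x,y,-t)$ and using that $\Lambda_h$ is preserved by this involution (because $h$ is autonomous), a parallel argument supplies the bound. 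At $t=0$, Lemma~\ref{le:2ker2} confines $\SSi(K^d)$ over $X\times X\times\{0\}$ to the closed cone $\{\tau\geq\vvert\xi\vvert_x\}$; in the open interior $\tau>\vvert\xi\vvert_x$, Lemma~\ref{le:2ker3} identifies both $\cor_Z$ and $\cor_{\Omega^-}\otimes\opb{q_2}\omega_X$ with shifts of $\cor_{\Delta\times\{0\}}$, so the connecting morphism $\psi$ becomes an isomorphism there and $K^d\simeq 0$ microlocally; this leaves only $\tau=\vvert\xi\vvert_x$, which sits inside $\Lambda_h$ by inspection of~\eqref{eq:lagrangianLh} at $t=0$.

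With both properties verified, the uniqueness in the main theorem of GKS12 (the statement following~\eqref{eq:unicity}) provides an isomorphism $K^h\vert_J\simeq K^d\vert_J$, which proves (a). Restricting to $t=a$ yields sheaf isomorphisms $\stK^h_a\simeq\stKd_a$ for each $a\in I$. To promote this to an isomorphism of monoidal presheaves on $(\R,+)$, I would check that for $0\leq a,b$ with $a+b<r_\iinj$ both composition isomorphisms $\stK^h_a\conv\stK^h_b\simeq\stK^h_{a+b}$ and $\stKd_a\conv\stKd_b\simeq\stKd_{a+b}$ reduce, under the identifications with $\cor_{\Delta_a}$ furnished above, to the canonical isomorphism $\cor_{\Delta_a}\conv\cor_{\Delta_b}\simeq\cor_{\Delta_{a+b}}$ of Lemma~\ref{lem:add1}; negative times are handled similarly via Lemma~\ref{lem:add3b}. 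This gives an isomorphism of monoidal presheaves on $[-\alpha,\alpha]$ for some $0<\alpha<r_\iinj$, and Theorem~\ref{th:pshK} extends it uniquely to an isomorphism $\stK^h\simeq\stKd$ on $(\R,+)$, proving (b).

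The main obstacle I expect is the microsupport analysis at $t=0$: the two open strata $\cor_Z$ and $\cor_{\Omega^-}\otimes\opb{q_2}\omega_X$ both have uncontrolled microsupport over the diagonal at $t=0$ (filling the closed cone $\tau\geq\vvert\xi\vvert_x$), and only the cancellation provided by the connecting morphism $\psi$ of~\eqref{eq:GKSdt2} — formalized precisely by Lemmas~\ref{le:2ker2} and~\ref{le:2ker3} — forces $\SSi(K^d\vert_J)$ to concentrate on the single Lagrangian $\Lambda_h$. By contrast, matching the monoidal structures is comparatively soft, because Lemma~\ref{lem:add1} already pins down the composition isomorphism on small thickenings canonically, and Theorem~\ref{th:pshK} then automatically extends the match to the whole real line.
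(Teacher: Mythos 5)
Your proposal is correct and follows essentially the same route as the paper: reduce to the microsupport estimate $\dot\SSi(K^d)\subset\Lambda_h$ via the uniqueness clause of~\cite{GKS12}, prove it on $J^{+}$ by Lemma~\ref{lem:geodflow}, on $J^{-}$ by a time-reversal symmetry, and over $t=0$ by Lemmas~\ref{le:2ker2} and~\ref{le:2ker3}, the connecting morphism $\psi$ killing the microsupport where $\tau>\vvert\xi\vvert_x$. One small correction on $J^{-}$: the cotangent lift of $(x,y,t)\mapsto(x,y,-t)$ sends $\tau$ to $-\tau$ and so cannot preserve $\Lambda_h\subset\{\tau>0\}$; what does preserve $\Lambda_h$ is that lift composed with the antipodal map in the $(\xi,\eta)$ variables coming from the duality $\cor_{\Omega^+}\simeq\RD'_{X\times X\times\R}\cor_Z$ you are implicitly invoking, and with this adjustment your ``parallel argument'' goes through.
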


\begin{proof}
	(i) 	Of course, (b) follows from (a). By the unicity  result in~\cite{GKS12}*{Prop.~3.2~(iii)}, it remains to prove that 
	\eq\label{eq:SSiKh}
	&&\dot\SSi(K^d)\subset\Lambda_h. 
	\eneq
	(ii) It follows from the distinguished triangle~\eqref{eq:GKSdt2} that $K^d\vert_{J^+}\simeq\cor_{Z}\vert_{J^+}$ and it then follows from
	Lemma~\ref{lem:geodflow} that~\eqref{eq:SSiKh} is true on $J^+$. Moreover, 
	$\dot\SSi(K^d\vert_{J^-})= v(\dot\SSi(K^d\vert_{J^+}))$
	where $v$ is the map $(x,y,t;\xi,\eta,\tau)\mapsto(y,x,-t;\eta,\xi,\tau)$.
	Since $v(\Lambda_h)=\Lambda_h$, we get that~\eqref{eq:SSiKh} is true on $J^-$. 
	
	\spa 
	(iii) One has $\SSi(K^d)\cap \opb{\pi}(X\times X\times\{0\})\subset\{(x,x,0;\xi,-\xi,\tau);\tau\geq\vvert\xi\vvert_x\}$  thanks to Lemma~\ref{le:2ker3}.
	The natural morphism $\psi\cl \cor_Z\to \cor_{\Omega^-}\tens\opb{q_2}\omega_X[+1]$ is an isomorphism by 
	Lemma~\ref{le:2ker3}. This implies~\eqref{eq:SSiKh}.
\end{proof}

\providecommand{\bysame}{\stLeavevmode\hbox to3em{\hrulefill}\thinspace}

\vspace*{1cm}
\noindent
\begin{tabular}{cc}
\parbox[t]{14em}
{\scriptsize{
		Fran\cc ois~Petit \\
		Université de Paris\\ 
		CRESS, INSERM, INRA\\ 
		F-75004 Paris France\\
		e-mail address: francois.petit@u-paris.fr\\}}
		&
\parbox[t]{14em}{\scriptsize{
		Pierre Schapira\\
		Sorbonne Universit{\'e}, CNRS IMJ-PRG\\
		4 place Jussieu, 75252 Paris Cedex 05 France\\
		e-mail: pierre.schapira@imj-prg.fr\\
		http://webusers.imj-prg.fr/\textasciitilde pierre.schapira/
}}
\end{tabular}
\end{document}